\documentclass[12pt]{amsart}
\usepackage{amsmath,amsthm,amsfonts,amssymb,color,tikz,tikz-cd,
    enumitem,verbatim
}
\DeclareMathOperator{\Tr}{{Tr}} 
\DeclareMathOperator{\Nm}{{N}}
\begin{document} 
\newcommand{\XXX}{\message{XXX}.{\bf XXX !}}
\newcommand{\changed}[1]{\textcolor{red}{#1}}
\newcommand{\anfangchanged}{\color{red}}
\newcommand{\finchanged}{\color{black}}
\let\paragraph\S
\renewcommand{\S}{{\mathbb S}}
\newcommand{\modulo}[1]{\text{ mod }#1}
\newcommand{\Ac}{{\mathop A}_\C}
\newcommand{\Hc}{{\mathbb H}_\C}
\newcommand{\B}{{\mathbb B}}
\newcommand{\C}{{\mathbb C}}
\newcommand{\K}{{\mathbb K}}
\renewcommand{\H}{{\mathbb H}}
\newcommand{\N}{{\mathbb N}}
\newcommand{\Oc}{{\mathbb O}_\C}
\newcommand{\Oct}{{\mathbb O}}
\newcommand{\Olo}{{\mathcal O}}
\newcommand{\Q}{{\mathbb Q}}
\newcommand{\Z}{{\mathbb Z}}
\renewcommand{\P}{{\mathbb P}}
\newcommand{\R}{{\mathbb R}}
\newcommand{\rc}{\subset}
\newcommand{\rank}{\mathop{rank}}
\newcommand{\trace}{\mathop{tr}}
\newcommand{\dimc}{\mathop{dim}_{\C}}
\newcommand{\Lie}{\mathop{Lie}}
\newcommand{\Spec}{\mathop{Spec}}
\newcommand{\Auto}{\mathop{{\rm Aut}_{\mathcal O}}}
\newcommand{\Aut}{\mathop{\rm Aut}}
\newcommand{\alg}[1]{{\mathbf #1}}
\newcommand{\tensor}{\otimes}
\newcommand{\dfe}{\overset{\mathrm{def}}{=\joinrel=}}
\newcommand{\coeff}{\mathop{{\it Coeff}}}
\let\Realpart\Re
\renewcommand{\Re}{\mathop{\Realpart e}}
\let\Impart\Im
\renewcommand{\Im}{\mathop{\Impart m}}
\newtheorem{lemma}{Lemma}[section]
\newtheorem{definition}[lemma]{Definition}
\newtheorem*{claim}{Claim}
\newtheorem{corollary}[lemma]{Corollary}
\newtheorem*{Conjecture}{Conjecture}
\newtheorem*{problem}{Problem}
\newtheorem*{SpecAss}{Special Assumptions}
\newtheorem*{example}{Example}
\newtheorem{example-nr}[lemma]{Example}
\newtheorem*{remark}{Remark}
\newtheorem*{observation}{Observation}
\newtheorem*{warning}{Warning}
\newtheorem*{fact}{Fact}
\newtheorem*{remarks}{Remarks}
\newtheorem{proposition}[lemma]{Proposition}
\newtheorem{theorem}[lemma]{Theorem}
\newtheorem{remark-nr}[lemma]{Remark}
\newtheorem{convention}[lemma]{Convention}
\numberwithin{equation}{section}
\def\labelenumi{\rm(\roman{enumi})}
\newcommand{\notiz}[1]{{\tt To do:}{\color{red}\tt #1}{\message{XX: #1}}}
\newcommand{\finalversion}{
  \newcommand{\ignore}[1]{}
  \let\changed\relax
  \let\anfangchanged\relax
  \let\finchanged\relax
  \let\notiz\ignore
}
%
%
\let\oldlabel\label
\title[Slice Regular function]{%
  Slice Regular functions on alternative $*$-algebras: Prescribing zeroes and
  values
  on discrete sets and related extension problems.
}
\author {Cinzia Bisi \& J\"org Winkelmann}
\begin{abstract}
        {\em Slice regular functions} are a generalization of holomorphic
        functions
        where
        alternative real $*$-algebras are considered instead of the
        field of complex numbers.

        For such functions we show that zero sets and function values
        on suitable subsets may be prescribed.

        As a consequence, we show that for any {\em axially symmetric
          domain} there exist  slice regular functions which
        (due to the nature of its zero set) can not be extended
        to a larger such domain.
\end{abstract}
\subjclass{}%
%
\address{%
Cinzia Bisi \\
Department of Mathematics and Computer Sciences\\
Ferrara University\\
Via Machiavelli 30\\
44121 Ferrara \\
Italy
}
\email{bsicnz@unife.it \newline
 ORCID : 0000-0002-4973-1053
}
\address{%
J\"org Winkelmann \\
IB 3-111\\
Lehrstuhl Analysis II \\
Fakult\"at f\"ur Mathematik \\
Ruhr-Universit\"at Bochum\\
44780 Bochum \\
Germany
}
\email{joerg.winkelmann@rub.de\newline
    ORCID: 0000-0002-1781-5842
}
\thanks{
{\em Acknowledgement.}
The two authors were partially supported by GNSAGA of INdAM.
C. Bisi was also partially supported by PRIN \textit{Variet\'a reali e complesse:
geometria, topologia e analisi armonica}. 
}

\maketitle
\tableofcontents

\newcommand{\chapter}[1]{\centerline{{\bf #1 }}}

\section{Introduction}
        {\em Slice regular functions} are a generalization of holomorphic
        functions
        where
        alternative real $*$-algebras are considered instead of the
        field of complex numbers.

        They were first introduced for the case of quaternions in
        \cite{GS2},
        later generalized to other alternative $*$-algebras (see \cite{GP})
        and since widely studied (see e.g.
        \cite{GSSMono,BW-Runge,BC23,BW3,autoo,autoq}).
        The class of alternative $*$-algebras is a very natural
        family of algebras. It contains the complex filed $\C$,
        the algebra of quaternions $\H$ and the algebra of octonions
        $\Oct$, as well as the Clifford algebras which are important
        areas reaching from abstract algebra to physics.

To be more precise, let        
$A$ be a finite-dimensional alternative real $*$-algebra.
Ghiloni and Perotti (\cite{GP}) defined the notions of a pseudosphere $\S_A$
and a  quadratic cone $Q_A$. The ``slice regular functions'' in which we
are interested are defined 
on ``axially symmetric domains''
in $Q_A$.

This is the class of functions considered in this paper.

Classical results in complex analysis state that
for every discrete subset $D$ in a domain $G\subset\C$
the following assertions are true:
\begin{enumerate}
\item
  For every map $\zeta:D\to\C$ there is a holomorphic function
  $f$ on $G$ with $\forall z\in D: f(z)=\zeta(z)$.
  
  \item
  For every map $\zeta:D\to\C^*$ there is a holomorphic function
  $f:G\to\C^*$ with $\forall z\in D: f(z)=\zeta(z)$.
  \footnote{$(ii)$ is less often stated explicitly, but follows
  directly from $(i)$ using the surjectivity of $\exp:\C\to\C^*$.}
\item
  There is a holomorphic function $f$ on $G$ with
  $D=\{z\in G: f(z)=0\}$.
\end{enumerate}

Classically, these results
are used to show: {\em Given a domain $G\subset \C$ there
  is a holomorphic function $f$ on $G$ which can not be extended to any
  larger domain in $\C$.}
For this purpose one constructs a discrete subset $D$ of $G$
such that every boundary point of $G$ is an accumulation point of $D$
in $\C$.

Our goal is to obtain similar results for
slice regular functions
for arbitrary finite-dimensional
real alternative $*$-algebras.

Here we discuss several related questions for
slice regular functions $f$ on an
``axially symmetric domain'' $\Omega_G$ in arbitrary
finite-dimensional alternative real $*$-algebra $A$:

\begin{enumerate}
\item
  For a given discrete family of isolated points and
  ``pseudospheres'' of the form  $x+y\S_A$, can we also find a slice regular
  function with prescribed values at these points?
\item
  If the prescribed values are all non-zero, may we find a
  slice regular function with these prescribed values such that
  there are no zeroes at all?
\item
  Given a discrete family of isolated points and pseudospheres,
  is there a slice regular function which vanishes at this set
  and nowhere outside this set?
\end{enumerate}

To answer this, we introduce the notion of an {\em adapted set}.
(Definition~\ref{def-adapted}).
This is a precise formulation
  of what we mean with ``discrete family of isolated points and
  pseudospheres''
as alluded to above.

Using this notion, our answers are:
\begin{enumerate}
\item
  Given an adapted set $D$ and 
  a function $\zeta:D\to A$ such that
  for every $x+y\S_A\subset D$
  there are $a,b\in A$
  (which may depend on $x,y$) with
  \begin{equation}\label{aff-lin}
  \zeta(x+yI)=(x+yI)a+b \ \forall I\in \S_A
  \end{equation}
  there exists a slice regular function with $f|_D=\zeta$.
  (Theorem \ref{prescribe-values}).
  \footnote{Conversely, the celebrated
      ``representation formula'' implies
    that for every slice regular function $f$ its restriction
    to any $x+y\S_A$ is an affine-linear function
    as in \eqref{aff-lin}.}
\item
  If in addition the adapted set $D$ satisfies
  $D\cap\R=\{\}$, $\zeta$ is locally constant on $D$ and
  \[
  \forall w\in D: \zeta(w)\in A^*
  \]
  then $f$ may be chosen such that
  \[
  \forall w\in \Omega_G: f(w)\ne 0
  \]
  (Corollary~\ref{cor-p-v-i}.)
\item
  For any adapted set $D$ there exists a slice regular function $f$
  such that
  \[
  D=\{w\in\Omega_G:f(w)=0\}
  \]
  (Theorem~\ref{thm-adapted}).
\end{enumerate}

On the other hand, the following is known
(\cite{camshaft}, Theorem~1, \cite{GPS-Edin}):

{\em
  Let $A$ be a division algebra
  (i.e., $A$ is isomorphic to $\H$ or $\Oct$)
    and let $\Omega$ be an axially
  symmetric domain with $\Omega\cap\R\ne\{\}$.
  Then the zero set of any slice-regular function (which is not
  constantly zero) is an adapted set.
}

Note that we consider arbitrary slice regular functions,
including those which are not necessarily ``admissible'' in the sense
of \cite{GP}.
The zero set of an admissible function is always an adapted
set (under the assumption $\Omega\cap\R=\{\}$),
even if $A$ is not a division algebra 
(\cite{GP}).

We show that for every slice regular function $f$ whose stem function
has an invertible value at one point the zero set is
contained in an adapted set (Proposition~\ref{zero-sub-adapted}).

However, without this assumption the zero set need
not be contained in an adapted set as can be seen in
Example~\ref{not-adapted}.

      \subsection{Non-extendability}

      As an application we use the results on zero sets in order to prove
      (Corollary~\ref{c-no-extend}):

{\em For every axially symmetric domain $\Omega_G$
  in the quadratic cone
  $Q_A$
  of a finite-dimensional alternative real $*$-algebra
  $A$ there is a
  slice regular function on $\Omega_G$
  which can not be extended to any larger
  axially symmetric domain.
}

In fact, we prove something stronger:

{\em Given an axially symmetric domain $\Omega$ in $Q_A$ there exists
  a discrete adapted set $D\subset\Omega$ such that every point in the
  boundary $\partial\Omega_G$ of $\Omega_G$ in $Q_A$
  is an accumulation point of $D$.}

\subsection{Alternative algebras}

For our purposes, in \paragraph\ref{altalg}
we gather a couple of basic algebraic facts on finite-dimensional
complex alternative algebras. E.g., we prove that the notions ``zero divisor''
and ``non-invertible'' coincide, even if the algebra
is only alternative and not associative.

  We also study infinite products in  alternative algebras,
  provide criteria for convergence
  (Proposition~\ref{infinite-product-algebra})
  and show, for example, that
 (with the appropriate definitions)
a convergent infinite product has an invertible limit if and only
if all its factors are invertible
(Corollary~\ref{A-infinite-product}).
This will be used in the construction of slice regular functions
without zeroes (Proposition~\ref{nju}).

Many of these auxiliary results are well-known for associative
  algebras, but we need them in the more general context
  of alternative algebras.
\section{Previous work}

\subsection{Previous work: Cohomological methods}

Using cohomological methods, Prezelj and Vlacci proved results on
prescribing function values, zeroes and poles
(\cite{prezelj2024-arxiv}).

Unlike our results, they considered only the quaternionic case.
On the other hand, their results are stronger in the sense that
jets instead of merely values may be prescribed on discrete sets.

\subsection{Previous work: Weierstrass factorization}

In \cite{Weier-fact}, Weierstra\ss\ factorization for {\em entire slice regular functions}
is discussed.
This is related to our work
on prescribing zeroes for slice regular functions, but in the opposite direction
and  limited to the case where $A=\H$ and $\Omega_G=Q_A=A$:
We use infinite products to {\em construct}  functions, while
the Weierstrass decomposition starts with a given function, providing a
representation as infinite product for this function.
However, using their techniques, it is easy to show that
for the case $ \Omega_G=A=\H$ one can prescribe
any adapted set as the zero set for a slice regular function.

\chapter{Preparations}

\section{Preparations}

We mostly follow the setup in \cite{GP}, which may be summarized
as follows.

$A$ is a real finite-dimensional alternative $*$-algebra with $1$.

The conjugation of the $*$-algebra $A$ is denoted as $x\mapsto x^c$.
It is assumed to be an antiautomorphism (i.e.~$(xy)^c=y^cx^c$) and
to fix $\R$ pointwise.

This conjugation induces notions of a ``trace''
$Tr(w)=w+w^c$ and a ``norm'' $N(w)=ww^c$.

Note: For an arbitrary such algebra $A$
these maps $\Tr$, $\Nm$ may have non-real
values, they are maps from $A$ to $A$.

A pseudosphere $\S_A$ and a quadratic
cone $Q_A$ are defined as usual as

\begin{align*}
  \S_A &=\{w\in A: Tr(w)=0, N(w)=1\}\\
\end{align*}
and
\[
Q_A=\R\cup
\{w\in A: Tr(w),N(w)\in\R, 4N(w)>Tr(w)^2\}.
\]

Note that  $\S_A\subset Q_A$ and  $w\in \S_A\implies w^ 2=-1$:

$Tr(x)=0$ implies $x=-x^c$, which in combination with
$N(x)=xx^c=1$ yields $x^ 2=-1$.

Thus
\[
\S_A=\{w\in A: w=-w^c,\ w^2=-1\}.
\]

  Equivalently,
  $\S_A$ may be defined as the set of all $q\in A$ for which there
  exists a homomorphism $\phi$ of real $*$-algebras from $\C$ to $A$
  with $\phi(i)=q$.

There is the following equivalent characterization of $Q_A$
(\cite{GP}, Proposition~3):
\[
Q_A=\{x+yI:x,y\in\R,I\in\S_A\}.
\]

In other words: $w\in Q_A$ iff there is an morphism of real
$*$-algebras
from $\C$ to $A$ with $w$ in its image.

As a consequence,
every non-zero element of $Q_A$ is invertible
(\cite{GP}, Proposition 1 (4)\&(5)).
\footnote{However, if $Q_A\ne A$, there are in general
  invertible elements outside $Q_A$,.}

Hence $Q_A\ne A$ if
$A$ admits zero divisors.

On the other hand, if $A$ is a division algebra, then $Q_A=A$
(\cite{GP}, Example 1.1) and
\[
\S_A=\{q\in A: q^2=-1 \}.
\]

\begin{definition}\label{def-symm}
  A {``\em symmetric domain}'' in $\C$ is an open subset $G$ such that
  \begin{enumerate}
  \item
    $z\in G\ \iff \bar z\in G$ and
  \item
    the quotient $G/\!\sim$ with
    \[
    z\sim w \ \iff\ \left( z=w \text{ or }z=\bar w\right)
    \]
    is connected.
  \end{enumerate}
\end{definition}
    
For such a symmetric
domain $G$ in $\C$ we define
the {``\em associated axially symmetric domain $\Omega_G$''}
as
\begin{equation}\label{def-omega_g}
\Omega_G=\{x+yI:x+yi\in G, I\in\S_A, x,y\in\R\}.
\end{equation}

Observe that  $\Omega_G$ is a connected open subset of $Q_A$.%
\footnote{Domains are usually required to be connected. For our purpose
  the relevant property is the connectedness of $\Omega_G$. Hence we
  define the notion of a ``symmetric domain'' in $\C$
  in such a way that $\Omega_G$
  is connected, allowing the case where $\Omega_G$ is connected
  despite $G$ being disconnected (e.g.~$G=\C\setminus\R$).
  }

Let
\[
G^+=\{z\in G:\Im(z)\ge 0\}.
\]
Note that there is a natural projection map $\pi:\Omega_G\to G^ +$
given by
\[
\pi(x+yI)=x+|y|i.
\]
If $\S_A$ is compact, then $\pi:\Omega_G\to G^ +$ is a proper map.

    \begin{definition}\label{def-adapted}
     Let $A$ be a real finite-dimensional alternative $*$-algebra.
    Let $G$ be a symmetric domain in $\C$,
    $G^ +=\{z\in G:\Im(z)\ge 0\}$ and let $\Omega_G$ be the
    associated axially symmetric domain in $A$
    (as in \eqref{def-omega_g}).
    Let $\pi:\Omega_G\to G^+$ be the natural projection
    $\pi:x+yI\mapsto x+|y|i$.

    A subset $D\subset\Omega_G$ is called {\em adapted}
    if it satisfies the following two properties:
    \begin{enumerate}
    \item
      $\pi(D)$ is discrete in $G^+$.
    \item
      For every $x,y\in\R$
      \begin{enumerate}
      \item
        either $x+y\S_A\subset D$
      \item
        or $\left(x+y\S_A\right)\cap D$ contains at most one element.
      \end{enumerate}
    \end{enumerate}
  \end{definition}

\section{Correspondence between slice regular and stem function}\label{corr}

Let $G\subset\C$ be a symmetric  domain and let $\Omega_G\subset Q_A$
as in \eqref{def-omega_g}.  
There is a well-known correspondence between
(slice) regular functions $f:\Omega_G\to A$ and stem functions
$F:G\to\Ac=A\tensor_\R\C$.
Usually, this correspondence is described as
  \[
  f(x+yI)=F'(x+yi)+IF''(x+yi)
  \]
  for $x,y\in\R$, $I\in\S_A$ and $F',F'':G\to A$
  with $F(z)=F'(z)\tensor 1+F''(z)\tensor i$.

Here we use the following description which is
equivalent to the usual one.

\[
\begin{pmatrix} f(x+yI) \\ f(x-yI)\\
\end{pmatrix}
=
M\cdot
\begin{pmatrix} F(x+yi) \\ F(x-yi)\\
\end{pmatrix},
\quad
\begin{pmatrix} F(x+yi) \\ F(x-yi)\\
\end{pmatrix}
=M\cdot \begin{pmatrix} f(x+yI) \\ f(x-yI)\\
\end{pmatrix}
\]

with
\[
M=\begin{pmatrix} e_1 & e_2 \\ e_2 & e_1 \\
\end{pmatrix},
\quad e_1=\frac 12\left(1-iI\right),\
e_2=\frac 12\left(1+iI\right).
\]

(Observe that $e_1^ 2=e_1, e_2^ 2=e_2, e_1e_2=0, e_2e_1=0, e_1+e_2=1$
and $M\cdot M=E_2$.)

Equivalently:
\begin{align*}
  F(x+yi)&=\frac {1-iI}2f(x+yI)+\frac{1+iI}2f(x-yI)\\
  &=
  \frac {1}2\left(f(x+yI)+f(x-yI) \right) -
  \frac {iI}2\left(f(x+yI)-f(x-yI) \right).
\end{align*}

and vice versa:
\begin{align*}
  f(x+yI)&=\frac {1-Ii}2F(x+yi)+\frac{1+Ii}2F(x-yi)\\
  &=
  \frac {1}2\left(F(x+yi)+F(x-yi) \right) -
  \frac {Ii}2\left(F(x+yi)-F(x-yi) \right).
\end{align*}

Note:

$\frac {1}2\left(f(x+yI)+f(x-yI) \right)$ is also known as
{\em ``spherical value''} and
$\frac {-I}{2y}\left(f(x+yI)-f(x-yI) \right)$ is known
as {``\em spherical derivative''}.

\begin{fact}\label{sphere-constant}
  If $F(x+yi)\in A\subset\Ac$,
  then $f(x+yI)=F(x+yi)\ \forall I\in\S_A$.
  \end{fact}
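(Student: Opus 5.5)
The plan is to substitute directly into the inversion formula from \S\ref{corr},
\[
f(x+yI)=\frac {1-Ii}2F(x+yi)+\frac{1+Ii}2F(x-yi),
\]
and use that a stem function satisfies $F(\bar z)=\overline{F(z)}$, where the bar denotes the conjugation $a\tensor\lambda\mapsto a\tensor\bar\lambda$ on $\Ac=A\tensor_\R\C$. This is the standard defining property of stem functions; with $F=F'\tensor 1+F''\tensor i$ it reads $F'(\bar z)=F'(z)$ and $F''(\bar z)=-F''(z)$. The argument proceeds in the following order.

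\textbf{Step 1.} The hypothesis $F(x+yi)\in A$ means $F''(x+yi)=0$. The conjugation on $\Ac$ fixes $A=A\tensor 1$ pointwise, so
\[
F(x-yi)=\overline{F(x+yi)}=F(x+yi).
\]

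\textbf{Step 2.} Write $a=F(x+yi)=F(x-yi)\in A$. Then
\[
f(x+yI)=\frac{1-Ii}{2}a+\frac{1+Ii}{2}a=a,
\]
using only distributivity in $\Ac$; the $Ii$ terms cancel. This holds for every $I\in\S_A$. Equivalently, in the usual description $f(x+yI)=F'(x+yi)+IF''(x+yi)$ with $F''(x+yi)=0$, which gives $f(x+yI)=F'(x+yi)=F(x+yi)$ at once.

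The only point needing care is whether $F(x-yi)=F(x+yi)$ is available, i.e.\ that the paper's stem functions carry the symmetry $F(\bar z)=\overline{F(z)}$. This is what makes the matrix formula with $M$ consistent for both $I$ and $-I$. If the paper's conventions left this implicit, I would instead use the $F',F''$ description above, which gives the fact immediately. The degenerate case $y=0$ is trivial, since then $x+yI=x$ and the formula reduces to $f(x)=F(x)$.
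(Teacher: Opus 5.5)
Your argument is correct and is essentially the paper's own. The paper gives no separate proof: it treats the fact as immediate from the correspondence formulas of Section~\ref{corr}, and your substitution is exactly that verification, done either with $F''(x+yi)=0$ in $f(x+yI)=F'(x+yi)+IF''(x+yi)$, or with $F(x-yi)=\overline{F(x+yi)}=F(x+yi)$ in the $e_1,e_2$ formula. The symmetry $\overline{F(\bar z)}=F(z)$ you were unsure about is the paper's stated convention for stem functions (see, e.g., Proposition~\ref{p-2}), so both of your routes are available.
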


\subsection{Convergence of slice functions versus convergence
  of stem functions.}

\begin{proposition}
  Let $A$ be a finite-dimensional real alternative $*$-algebra.
  Let $G$ be a symmetric domain in $\C$ with associated axially symmetric
  domain $\Omega_G\subset Q_A$.

  Let $f, f_n$ (with $n\in\N$) be slice regular functions on $\Omega_G$
  with associated stem functions $F,F_n:G\to \Ac$.

  Then the following properties are equivalent:
  \begin{itemize}
    \item
    $f=\displaystyle\lim_{n\to+\infty}f_n$ locally uniformly on $\Omega_G$.
    \item
      $F=\displaystyle\lim_{n\to+\infty}F_n$ locally uniformly on $G$.
  \end{itemize}
\end{proposition}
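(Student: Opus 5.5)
The plan is to use the explicit linear correspondence between $f$ and $F$ given by the matrix $M$, together with uniform bounds on the coefficients $e_1,e_2$ over the relevant compact sets. Since both directions are linear, we may pass to differences: set $g_n=f-f_n$ with stem function $G_n=F-F_n$. It then suffices to show that $g_n\to 0$ locally uniformly on $\Omega_G$ if and only if $G_n\to 0$ locally uniformly on $G$. Fix norms on $A$ and on $\Ac=A\tensor_\R\C$. By finite-dimensionality, multiplication is bilinear and hence bounded: $\|ab\|\le C\|a\|\|b\|$ in $\Ac$.

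For the direction stem $\Rightarrow$ slice, start from
\[
g_n(x+yI)=\tfrac{1-Ii}{2}G_n(x+yi)+\tfrac{1+Ii}{2}G_n(x-yi).
\]
Let $K\subset\Omega_G$ be compact. Then the set of $I$ occurring is not automatically bounded, since $\S_A$ may be non-compact when $A$ has zero divisors. The key observation is that each point $w=x+yI\in K$ with $y\ne0$ determines $y I$ (up to the choice of sign of $y$). Hence $\|yI\|$ is bounded on $K$ by $\sup_K\|w\|+\sup_K|x|$, and $x=\Tr(w)/2$ is continuous. Near points of $K$ with $y$ small, write the formula in the spherical-value/derivative form:
\[
g_n(x+yI)=\tfrac12\bigl(G_n(z)+G_n(\bar z)\bigr)-\tfrac{(yI)i}{2y}\bigl(G_n(z)-G_n(\bar z)\bigr),\qquad z=x+yi.
\]
Cover $\pi(K)$, which is compact because $\pi$ is continuous, by finitely many closed discs in $G$ that are symmetric about the real axis. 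The difference quotient $(G_n(z)-G_n(\bar z))/y$ is controlled on a slightly smaller disc by the sup of $G_n'$. Since $G_n$ is holomorphic, Cauchy estimates bound $G_n'$ by the sup of $G_n$ on the bigger disc. This yields $\sup_K\|g_n\|\le C_K\sup_{L}\|G_n\|$ for some compact $L\subset G$, which gives the implication.

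For the direction slice $\Rightarrow$ stem, fix any single $I_0\in\S_A$. It is nonempty as long as $A\neq\R$; the case $A=\R$ is trivial up to conventions. Use
\[
G_n(x+yi)=\tfrac12\bigl(g_n(x+yI_0)+g_n(x-yI_0)\bigr)-\tfrac{iI_0}{2}\bigl(g_n(x+yI_0)-g_n(x-yI_0)\bigr).
\]
Here the coefficients are fixed elements of $\Ac$, so they are bounded. For compact $L\subset G$, the set $K=\{x+yI_0: x+yi\in L\}\cup\{x-yI_0:x+yi\in L\}$ is compact in $\Omega_G$, being a continuous image of $L$. This gives $\sup_L\|G_n\|\le C\sup_K\|g_n\|$ immediately.

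I expect the main obstacle to be the first direction when $\S_A$ is non-compact, that is, when $\pi$ is not proper. One must avoid bounding $\|I\|$ globally and instead exploit that on compact $K$ the product $yI$ is bounded. Near the real axis, one must handle the $1/y$ via the holomorphic difference-quotient and Cauchy-estimate argument. Note also that the slice regularity of the limit $f$ is assumed, so no closure statement is needed.
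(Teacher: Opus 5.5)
Your argument is correct, and it differs from the paper's proof, which gives no argument of its own: it cites the quaternionic case (\cite{BW-Runge}, \cite{MR4078414}) and says the proofs adapt easily.

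For $A=\H$ the sphere $\S_\H$ is compact. So $\|I\|$ is bounded, and the formula $f(x+yI)=\frac{1-Ii}2F(x+yi)+\frac{1+Ii}2F(x-yi)$ gives both directions at once. For general $A$ this shortcut really does fail, as you anticipate. Take $\mathrm{Mat}(2\times 2,\R)$ with the adjugate as involution. Then $I_k=\begin{pmatrix}0&-k\\ 1/k&0\end{pmatrix}\in\S_A$ and $y_k=k^{-2}$ give a compact set $\{x\}\cup\{x+y_kI_k\}\subset Q_A$ on which $\|I\|$ is unbounded.

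Your two ingredients are exactly what the ``easy adaptation'' needs. The first is boundedness of $yI=w-\frac12\Tr(w)$ on $K$. The second is Cauchy estimates for the spherical derivative $(G_n(z)-G_n(\bar z))/(2yi)$ on discs centred on $\R$. So your proof is the more complete one.

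It also gives slightly more. Your bound only uses that the set is bounded and that its $\pi$-image is relatively compact in $G$. It therefore also yields uniform convergence on small neighbourhoods. This matters because $\Omega_G$ need not be locally compact (in the example above, it fails at real points). There ``locally uniformly'' and ``uniformly on compacta'' need not coincide.

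Two small repairs are needed.

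First, discs symmetric about $\R$ cannot cover all of $\pi(K)$ in general, e.g.\ if $G\cap\R=\emptyset$. Make the split explicit:
\begin{itemize}
\item cover $\pi(K)\cap\R$ by finitely many discs $B(r_j,\rho_j)$ with $r_j\in\R$ and $\overline{B(r_j,2\rho_j)}\subset G$;
\item the rest of $\pi(K)$ is compact and misses $\R$, so it has $\Im(z)\ge\delta>0$;
\item there $\|I\|=\|yI\|/|y|\le C/\delta$, and the unmodified formula suffices.
\end{itemize}

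Second, $\S_A\neq\emptyset$ is a standing assumption, not a consequence of $A\neq\R$: $\C$ with the identity involution has $\S_A=\emptyset$. In that degenerate case the equivalence fails rather than being trivial, since $\Omega_G$ no longer controls $F$.
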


\begin{proof}
  For $A=\H$ this is proved in
  \cite{BW-Runge} and \cite{MR4078414}, but the proofs
  are easily adapted to the general case as well.
\end{proof}

\section{Alternative algebras}\label{altalg}

The purpose of this section
  is to collect some general facts on alternative algebras which
  we need for our study of slice regular functions.

All algebras $A$ are assumed to be {\em unital}, i.e., there is a unit
$1_A$ with $1_A\cdot x=x=x\cdot 1_A\ \forall x\in A$.

Algebra homomorphisms are assumed to be unital, i.e., taking $1$ to $1$.

An algebra $A$ is called {\em alternative} if
\[
\forall x,y\in A: x(xy)=(xx)y,\quad x(yy)=(xy)y.
\]

For general information about non-associative algebras see
\cite{Schafer}.

An important tool in dealing with alternative algebras is the following
theorem of Artin:
\begin{theorem}\label{artin-thm}
  Let $A$ be an alternative algebra over a field $k$ which is generated
  as a unital $k$-algebra by
  two elements.

  Then $A$ is associative.
\end{theorem}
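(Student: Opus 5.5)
The statement is Artin's theorem. My plan follows the standard route through associators, arguing at the level of an arbitrary alternative algebra $A$ over $k$.

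\emph{Step 1: the associator is alternating.} For $x,y,z\in A$ write $(x,y,z)=(xy)z-x(yz)$. The alternative laws say exactly that $(x,x,y)=0$ and $(x,y,y)=0$. Linearizing these (replace $x$ by $x+z$, resp.\ $y$ by $y+z$) gives
\[
(x,z,y)=-(z,x,y)\quad\text{and}\quad (x,y,z)=-(x,z,y).
\]
Hence the trilinear map $(x,y,z)\mapsto(x,y,z)$ changes sign under the transpositions $(12)$ and $(23)$. These generate $S_3$, so the associator is alternating. In particular $(x,y,x)=0$, which is the flexible law.

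\emph{Step 2: reduction to monomials.} Let $a,b$ generate $A$ as a unital algebra. Then $A$ is spanned by $1$ together with all nonassociative monomials (bracketed words) in $a,b$. Since the associator is trilinear, it suffices to show that $(u,v,w)=0$ for all monomials $u,v,w$. Once this holds, every bracketing of a word gives the same element, and $A$ is associative. Note that associators involving $1$ vanish trivially.

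\emph{Step 3: the main step.} The key claim is that $(u,v,w)=0$ whenever $u,v,w$ are monomials. This is the main obstacle, because alternativity controls only associators with a repeated entry. I would prove a stronger statement by induction on the total degree $|u|+|v|+|w|$: every bracketing of a monomial built from $u,v,w$ equals the same element.

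The base step is already available: every product of $a$'s with themselves is the same whatever the bracketing (power-associativity, via $(x,x,y)=0$ and induction), and likewise for $b$. For the inductive step, note that each of $u,v,w$ is, by the induction hypothesis, a well-defined word in $a,b$. Two of the three words begin with the same letter in the free-semigroup sense, so the associator can be rewritten so as to exhibit a repeated factor.

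Concretely, I would use the identities valid in alternative algebras,
\[
(xy,z,x)=x(y,z,x)\quad\text{(Moufang)},
\qquad
(x^n,y,z)\ \text{expressible via }(x,y,z),
\]
to peel a leading letter off one argument. This reduces $(u,v,w)$ to associators of lower total degree, which vanish by induction, together with associators with a repeated entry, which vanish by Step 1.

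\emph{Alternative route.} If this bookkeeping becomes unwieldy, I would instead cite the classical proof in \cite{Schafer} (Theorem~3.1 there). That proof shows that the set $\{(x,y,z): x,y,z\}$ generates an ideal compatible with these identities, and that the subalgebra generated by $a,b$ is associative via the Moufang identities. Since the statement is classical, citing \cite{Schafer} is acceptable in context.
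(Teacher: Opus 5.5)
Your fallback, citing \cite{Schafer}, Theorem~3.1, is exactly what the paper does; it gives no argument beyond that reference. Taken as a proof by citation, your proposal therefore matches the paper.

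\textbf{The self-contained argument does not work as written.} Step~3 is the whole content of Artin's theorem, and it is only asserted. The identities you invoke are three-variable identities in which the repeated element must occur as an entire argument. Their correct forms are $(x,xy,z)=(x,y,z)x$, equivalently $(xy,z,x)=(y,z,x)x$, and $(yx,z,x)=x(y,z,x)$. Your version $(xy,z,x)=x(y,z,x)$ is wrong: combined with the correct one it would force $x$ to commute with $(y,z,x)$. In the octonions this fails, since for imaginary units $x,y,z$ not in a common quaternion subalgebra, $x$ anticommutes with the nonzero associator $(y,z,x)$.

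More seriously, pigeonhole only gives two arguments starting with the same letter, say $(ap',aq',r)$. If $p',q'\neq 1$ and $r\neq a$, no argument equals $a$. Then none of your identities applies, nothing can be ``peeled off'', and no mechanism for lowering the total degree has been supplied. The claimed reduction is therefore not established.

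\textbf{The missing ingredient.} You need a genuinely four-variable identity, namely the linearized Moufang identity. It is conveniently packaged as the Bruck--Kleinfeld function $f(p,q,r,s)=(pq,r,s)-q(p,r,s)-(q,r,s)p$, which is alternating in all four variables in any alternative algebra. With it your induction on total degree closes as follows.
\begin{enumerate}
\item Assume all associators of monomials of total degree $<n$ vanish, so monomials of degree $<n$ are just words in $a,b$.
\item For monomials of total degree $n$, the two correction terms in $f$ involve associators of lower degree. Hence $(pq,r,s)=f(p,q,r,s)$.
\item Alternation of $f$ then yields $(p\ell,r,s)=-(pr,\ell,s)$.
\item Let $n\ge 4$. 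Apply this rule with $\ell$ the last letter of a longest argument. If necessary, apply it once more, after a cyclic permutation, to the remaining argument of length $\ge 2$. This gives $(u,v,w)=\pm(W,\ell,m)$ with $\ell,m\in\{a,b\}$ and $W$ of length $n-2\ge 2$.
\item If $\ell=m$, this associator vanishes. Otherwise write $W=W'c$ with $c$ a letter. Then $(W,\ell,m)=f(W',c,\ell,m)=0$, because $c\in\{\ell,m\}$ gives $f$ a repeated argument.
\end{enumerate}
The case $n=3$ is your pigeonhole observation.
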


\begin{proof}
See  \cite{Schafer}, Theorem~3.1.
\end{proof}

As a first preparation, we deduce a lemma on the exponential map for
certain complex algebras.

Let $A$ be a complex algebra which admits a
  complete norm
  (as vector space) satisfying
  \[
  \forall x,y\in A:||xy||\le||x||\cdot||y||.
  \]
  (Such a norm always exists if $\dim(A)<+\infty$, see Lemma~\ref{a-norm}.)
  
Then
the exponential series is convergent and thus defines a map
\[
x\mapsto \exp(x)=\sum_{k=0}^{+\infty} \frac{x^k}{k!}
\]

The proof is the same as in the case of real or complex numbers or matrices.

Furthermore, it is easily seen that
$
\exp(x+y)=\exp(x)\exp(y)
$
if $xy=yx$ (but not in general as is well known
from the case of matrix algebras).

\begin{lemma}\label{exp-image}
  Let $A_0$ be a finite-dimensional
  commutative and associative $\C$-algebra.

  Let $x\in A_0$ be an invertible element.
  Then there exists an element $y\in A_0$
  with $\exp(y)=x$.
\end{lemma}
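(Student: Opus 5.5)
We will prove Lemma~\ref{exp-image} by reducing to local algebras, where the standard decomposition into a scalar part and a nilpotent part lets us write the logarithm as a finite series.

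\emph{Step 1: reduction to local algebras.} Since $A_0$ is a finite-dimensional commutative associative $\C$-algebra, it is Artinian. Hence it splits as a finite product $A_0\cong A_1\times\dots\times A_r$ of local $\C$-algebras. This splitting is given by a complete system of orthogonal idempotents $\epsilon_1,\dots,\epsilon_r$ with $\sum\epsilon_j=1$. The exponential is computed componentwise, because the $\epsilon_j$ are orthogonal idempotents and everything commutes. An element is invertible if and only if each component is invertible. So it suffices to treat the case where $A_0$ is local with maximal ideal $\mathfrak m$.

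\emph{Step 2: the local case.} Because $\dim A_0<\infty$ and $\C$ is algebraically closed, the residue field $A_0/\mathfrak m$ is a finite field extension of $\C$, hence equal to $\C$. Also $\mathfrak m$ is nilpotent, say $\mathfrak m^N=0$. So every $x\in A_0$ can be written as $x=\lambda+n$ with $\lambda\in\C$ and $n\in\mathfrak m$.

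If $x$ is invertible, then $\lambda\ne0$, since otherwise $x\in\mathfrak m$. Write $x=\lambda(1+u)$ with $u=\lambda^{-1}n$, which is nilpotent. Now set
\[
y=\mu+\sum_{k=1}^{N-1}\frac{(-1)^{k+1}}{k}u^k ,
\]
where $\mu\in\C$ satisfies $e^\mu=\lambda$. The sum is finite because $u^N=0$.

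\emph{Step 3: verifying $\exp(y)=x$.} The scalar $\mu$ commutes with everything, so $\exp(y)=e^\mu\exp(\log(1+u))$, using the multiplicativity of $\exp$ for commuting elements noted above. It remains to show $\exp(\log(1+u))=1+u$ for nilpotent $u$.

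All the expressions involved lie in the subalgebra $\C[u]$, and both sides are polynomials in $u$ modulo $u^N$. The identity $\exp(\log(1+t))=1+t$ holds in the formal power series ring $\C[[t]]$. Substituting $t\mapsto u$ gives a well-defined ring homomorphism $\C[[t]]\to\C[u]$, because $u$ is nilpotent and so only finitely many terms survive; hence the identity transfers to $u$.

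\emph{Main obstacle.} The main care point is this substitution step: one must check that the analytically defined $\exp$ on $A_0$ agrees with formal substitution. This holds because $\log(1+u)$ is nilpotent, so its exponential series is a finite sum.

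Alternatively, one can avoid Step 1 altogether. Let $L=\C[x]\subset A_0$, which is isomorphic to $\C[t]/(p)$ with $p$ the minimal polynomial of $x$. Since $x$ is invertible, $p(0)\ne0$. Applying the Chinese remainder theorem to $\C[t]/(p)$ performs the same local splitting in a more concrete form.
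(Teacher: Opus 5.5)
Your proposal is correct and follows essentially the same route as the paper: both reduce, via the Artinian structure theorem, to a local $\C$-algebra with nilpotent maximal ideal and residue field $\C$. Both then write $x=c(1+t)$ with $c\in\C^*$ and $t$ nilpotent, and take $y$ to be a scalar logarithm of $c$ plus the finite logarithmic series of $1+t$. Your formal-power-series argument for $\exp(\log(1+u))=1+u$ fills in a step that the paper simply asserts.
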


\begin{proof}
  Since $A_0$ is a finite dimensional commutative
  complex algebra, it is an Artinian ring.
  It follows (see \cite{AM}, Theorem 8.7) that $A_0$ is
  isomorphic to a direct sum of
  finite-dimensional local $\C$-algebras.
  Hence we may assume that $A_0$ is a local $\C$-algebra
  with maximal ideal $m$.
  Since $A_0$ is finite-dimensional, the descending
  sequence of ideals
  $m\supset m^2\supset m^3\supset\ldots$ terminates.
  
  Therefore $m^n=\{0\}$ for some $n\in\N$ and  all elements of $m$
  are nilpotent.

  Furthermore the residue field $A_0/m$ is finite-dimensional
  over $\C$. Since $\C$ is algebraically closed, $A_0/m\simeq\C$
  and $m$ is (as a vector subspace) of complex
  codimension $1$ in $A_0$.
  It follows that every element $x\in A_0$ may be written
  as $x=s+r$ with $s\in\C$ and $r\in m$.
  
  Given $x\in A_0^*$, we write $x=c(1+t)$ with
  $c\in\C^*$ and $t\in m$.
  The logarithmic series
  \begin{align}\label{log-series}
  \log(1+t)=\sum_{k=1}^{+\infty}(-1)^{k+1}\frac{t^k}{k}
  \end{align}
  degenerates to a polynomial, because $t$ is nilpotent.
  Thus $\log(1+t)$ is well-defined. Now choose $z\in\C$ with
  $e^ z=c$ and define $y=z+\log(1+t)$.
  Then
  \[
  \exp(y)=\exp(z)\cdot\exp\left(\log(1+t)\right)=c(1+t)=x.
  \]
\end{proof}

\begin{proposition}\label{alt-eq}
  Let $A$ be a unital finite-dimensional alternative algebra
  over some field $k$.
  
  Fix $x\in A$ and
  let $A_0$ be the $k$-subalgebra generated by $x$ (and $1$).

  Then the following properties are equivalent:
  \begin{enumerate}
  \item
    $x$ is left invertible, i.e. $\exists y\in A:yx=1$,
  \item
    $x$ is right invertible, i.e. $\exists y\in A:xy=1$,
  \item
    $x$ is left invertible in $A_0$, i.e. $\exists y\in A_0:yx=1$,
  \item
    $x$ is right invertible in $A_0$, i.e. $\exists y\in A_0:xy=1$,
  \item
    $x$ is not a left zero divisor,
    i.e., $xy\ne 0\ \forall y\in A\setminus\{0\}$.
  \item
    $x$ is not a right zero divisor,
    i.e., $yx\ne 0\ \forall y\in A\setminus\{0\}$.
  \item
    $x$ is not a left zero divisor in $A_0$,
    i.e., $xy\ne 0\ \forall y\in A_0\setminus\{0\}$.
  \item
    $x$ is not a right zero divisor in $A_0$,
    i.e., $yx\ne 0\ \forall y\in A_0\setminus\{0\}$.
   \item
    $\det(L_x)\ne 0$ where $L_x:A\to A$ denotes the $k$-linear map
    $w\mapsto xw$.
  \item
    $\det(R_x)\ne 0$ where $R_x:A\to A$ denotes the $k$-linear map
    $w\mapsto wx$.

  \end{enumerate}

If $k=\C$, these properties are also equivalent to the following
two assertions.
\begin{enumerate}
\item[\rm (xi)]
    $x$ is in the image of $\exp$, i.e.,
    $\exists z\in A:\exp(z)=x$.
  \item[\rm(xii)]
    $x$ is in the image of $\exp$ restricted to $A_0$, i.e.,
    $\exists z\in A_0:\exp(z)=x$.
\end{enumerate}
\end{proposition}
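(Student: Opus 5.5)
The core observation is that, by Artin's theorem (Theorem~\ref{artin-thm}), the subalgebra generated by $x$ together with any single other element $y$ is associative. In particular $A_0$ is a commutative associative finite-dimensional $k$-algebra. The plan is to show that every condition is equivalent to ``$x$ is invertible in $A_0$''.

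\emph{Step 1: the conditions inside $A_0$.} Since $A_0$ is commutative, (iii)$\iff$(iv) and (vii)$\iff$(viii). Multiplication by $x$ is a $k$-linear map $A_0\to A_0$ on a finite-dimensional space. It is injective iff it is surjective, and surjectivity means $1$ lies in its image. Hence (iii)$\iff$(vii).

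\emph{Step 2: passing from $A_0$ to $A$.} The implications (iii)$\Rightarrow$(i), (iv)$\Rightarrow$(ii), (v)$\Rightarrow$(vii) and (vi)$\Rightarrow$(viii) are trivial. Now suppose $yx=1$ for some $y\in A$. Apply Artin's theorem to the subalgebra $B$ generated by $x$ and $y$, which is associative and finite-dimensional. In $B$, left multiplication $L_y$ is surjective, since $L_y(xb)=b$. Therefore $L_y$ is bijective on $B$, so $y$ has a right inverse and hence a two-sided inverse, and $x=y^{-1}$ is invertible in $B$.

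It remains to pull this inverse into $A_0$. By Cayley--Hamilton applied to $L_x|_{A_0}$, or more simply because $x$ is algebraic, $x$ satisfies a polynomial $p$. Write $p(t)=t^m q(t)$ with $q(0)\neq0$. Cancelling $x^m$ in $B$ gives $q(x)=0$, so $x^{-1}=-\frac{1}{q(0)}\bigl(q(x)-q(0)\bigr)/x$ lies in $A_0$. This shows (i)$\Rightarrow$(iii), and symmetrically (ii)$\Rightarrow$(iv).

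For the zero-divisor conditions, let $u$ be the inverse of $x$ in $A_0$. If $xy=0$, then inside the associative algebra generated by $x$ and $y$ (which contains $u$, since $u$ is a polynomial in $x$) we get $y=u(xy)=0$. Hence (iii)$\Rightarrow$(v), and likewise (iii)$\Rightarrow$(vi).

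\emph{Step 3: the determinant conditions.} We have (v)$\iff$(ix), because $L_x$ is injective iff $\det L_x\neq0$. Similarly (vi)$\iff$(x).

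\emph{Step 4: the exponential, when $k=\C$.} Lemma~\ref{exp-image} applied to $A_0$ gives (iii)$\Rightarrow$(xii), and (xii)$\Rightarrow$(xi) is trivial. For (xi)$\Rightarrow$(i), suppose $x=\exp(z)$. Then $x$ lies in the closed subalgebra generated by $z$, which is associative and commutative by Artin's theorem; finite-dimensionality makes it automatically closed. In this subalgebra $\exp(z)\exp(-z)=\exp(0)=1$, so $x$ is invertible.

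The main obstacle is Step~2: converting a one-sided inverse in the non-associative algebra $A$ into an inverse lying in $A_0$. This is where Artin's theorem and the polynomial trick have to be combined carefully, making sure every manipulation stays inside a two-generated subalgebra.
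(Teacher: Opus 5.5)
Your proof is correct, but it is organized differently from the paper's. The paper proves two cycles, (vi)$\Rightarrow$(i)$\Rightarrow$(vii)$\Rightarrow$(iv)$\Rightarrow$(vi) and the mirror (v)$\Rightarrow$(ii)$\Rightarrow$(viii)$\Rightarrow$(iii)$\Rightarrow$(v), joined by (iii)$\iff$(iv). Lemma~\ref{alt-aux} supplies the linear algebra there: in finite dimension, not being a zero divisor on one side gives a one-sided inverse, and in an associative algebra a one-sided inverse rules out being a zero divisor on that side.

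The step you single out as the obstacle, moving a one-sided inverse from $A$ into $A_0$, is done in the paper without polynomials. If $yx=1$ and $xz=0$ with $z\in A_0$, then $x,y,z$ lie in the associative algebra generated by $x,y$, so $z=y(xz)=0$. Hence $L_x|_{A_0}$ is injective, and therefore surjective. Your annihilating-polynomial argument reaches the same conclusion and gives a concrete formula for $x^{-1}$, at the cost of a few more lines. In fact the one-line computation $z=y(xz)$, combined with your own Step~1, would give (i)$\Rightarrow$(iii) immediately.

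Your detour through the bijectivity of $L_y$ on $B$ is also unnecessary. The phrase about $y$ acquiring ``a right inverse'' is muddled, since $x$ already is one. What you need is simpler: $yx=1$ and associativity of $B$ give $y^mx^m=1$, and that is all the cancellation of $x^m$ requires.

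For (xi)$\Rightarrow$(i) you follow the paper, but more carefully. You place $\exp(\pm z)$ in the associative commutative subalgebra generated by $z$ before invoking $\exp(z)\exp(-z)=\exp(0)$, a point the paper leaves implicit.
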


  \begin{remark}
    For an arbitrary field $k$ there is no exponential map and even if there
    is an exponential map, $\exp:k\to k^*$ is not necessarily
    surjective.
    (For $k=\R$ we have $\exp(\R)=\R^+\ne\R^*$.
    Hence $(i)\ \not \hskip -12pt  \iff(xi)$ for $k=\R=A$.) 

    Therefore the last two properties are equivalent only in the complex
    case.
  \end{remark}

\begin{warning}
   For infinite-dimensional algebras many of these
  properties are inequivalent.
\end{warning}

We prepare the proof of Proposition~\ref{alt-eq} with the
auxiliary lemma below.

\begin{lemma}\label{alt-aux}
  Let $x$ be an element in an algebra $A$ over a field $k$.

  Then:
  \begin{enumerate}
  \item
    If $A$ is finite-dimensional and $x$ is not a right zero divisor, then
    $x$ is left-invertible.
  \item
    If $A$ is finite-dimensional and $x$ is not a left zero divisor, then
    $x$ is right-invertible.
  \item
    If $A$ is associative and $x$ is left-invertible,
    then $x$ is not a left zero divisor.
  \item
    If $A$ is associative and $x$ is right-invertible,
    then $x$ is not a right zero divisor.
  \end{enumerate}
\end{lemma}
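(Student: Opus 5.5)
We argue with the multiplication operators $L_x\colon w\mapsto xw$ and $R_x\colon w\mapsto wx$, which are $k$-linear endomorphisms of $A$. For (i) and (ii) we use finite-dimensionality, i.e.\ rank--nullity. For (iii) and (iv) we use a direct associativity computation.

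For (i), suppose $x$ is not a right zero divisor, so $yx\ne 0$ for all $y\ne 0$. Then the map $R_x\colon y\mapsto yx$ has trivial kernel. Since $\dim_k A<\infty$, rank--nullity shows that $R_x$ is surjective. In particular $1=R_x(y)=yx$ for some $y\in A$, so $x$ is left-invertible.

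Statement (ii) follows in the same way with $L_x\colon y\mapsto xy$. If $x$ is not a left zero divisor, then $L_x$ is injective, hence surjective, so $xy=1$ for some $y$.

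For (iii), assume $A$ is associative and $yx=1$. Suppose $xz=0$. Then
\[
z=1z=(yx)z=y(xz)=y0=0,
\]
so $x$ is not a left zero divisor.

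Statement (iv) is symmetric. If $xy=1$ and $zx=0$, then
\[
z=z(xy)=(zx)y=0.
\]

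No real obstacle is expected. The only points needing care are matching the left/right conventions in the definitions of zero divisors used in Proposition~\ref{alt-eq}, and noting exactly where finite-dimensionality (for (i), (ii)) and associativity (for (iii), (iv)) enter. These hypotheses are precisely what the later proof of Proposition~\ref{alt-eq} replaces by Artin's Theorem~\ref{artin-thm}, applied inside the associative subalgebra $A_0$.
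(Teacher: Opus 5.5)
Your proof is correct and is essentially the paper's argument. In (i) and (ii), injectivity of $R_x$ (resp.\ $L_x$) plus finite-dimensionality gives surjectivity and hence a one-sided inverse; in (iii) and (iv), a one-line associativity computation settles the claim. One small correction to your closing aside: in Proposition~\ref{alt-eq} only associativity is replaced by Artin's theorem, since finite-dimensionality remains a standing hypothesis there. Also, Artin's theorem is applied to the subalgebra generated by $x$ and a second element (the inverse or the zero-divisor witness), not to $A_0$ alone.
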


\begin{proof}
  \begin{enumerate}
  \item
     If $x$ is not a right zero divisor, then
     $\ker R_x=\{0\}$ (with $R_x:w\mapsto wx$), i.e., $R_x$ is
     injective.
    Because $\dim(A)<+\infty$, this implies
    that the linear endomorphism $R_x$ of
    the vector space $A$ is surjective.
    Hence $\exists y\in A:R_x(y)=yx=1$.
  \item
    Follows by similar arguments.
  \item
    Assume $x,y,z\in A$ with $y\ne 0$, $xy=0$ and $zx=1$.
    Then (using associativity)
    we arrive at a contradiction:
    \[
    0=z\cdot 0 =z\underbrace{(xy)}_0=\underbrace{(zx)}_1y=
    1\cdot y= y\ne 0
    \]
  \item
    Follows by similar arguments.
  \end{enumerate}
\end{proof}
\begin{proof}[Proof of Proposition~\ref{alt-eq}]
  Since $A$ is alternative, we have $(x^n)(x^m)=(x^m)(x^n)$ for all
  $n,m\in\N$. It follows that $A_0$ is a commutative and associative
  algebra.

  Commutativity of $A_0$ yields $(iii)\iff(iv)$.

   To check $(vi)\iff(x)$, we  consider the linear map $R_x:A\to A$
  defined as $R_x(w)=wx$.
  We observe that $x$ is a right zero divisor iff $\ker R_x\ne\{0\}$
  which in turn is equivalent to $\det(R_x)=0$.

  Similarily it follows that $(v)\iff(ix)$.

  $(vi)\implies(i)$ follows from Lemma~\ref{alt-aux}, $(i)$.

  $(i)\implies(vii)$: Assume the contrary. Then
  $\exists y\in A,z\in A_0\setminus\{0\}: yx=1,xz=0$.
  Let $A_1$ be the subalgebra of $A$ generated by $x$ and $y$.
  Note that $A_1$ is associative due to Artin's theorem
  (Theorem \ref{artin-thm}).
  Since $A_0$ is generated by $x$ and $z\in A_0$,
  we have $z\in A_1$. Thus all the three elements $x,y,z$ are
  contained in one associative subalgebra and
  we arrive at a contradiction to Lemma~\ref{alt-aux} $(iii)$.

  $(vii)\implies(iv)$:
  This is due to Lemma~\ref{alt-aux} $(ii)$.
  
  $(iv)\implies (vi)$:
  Assume the contrary. Then $\exists y\in A_0,z\in A\setminus\{0\}:
  yx=1, xz=0$ with $x,y,z$ all contained in an associative subalgebra,
  namely the subalgebra $B$ generated by $x$ and $z$.
  Therefore this contradicts Lemma~\ref{alt-aux}, $(iv)$.

  Thus
    \[
    (vi)\implies (i)\implies (vii)\implies(iv)\implies (vi)
    \]
    
    By similar arguments one deduces
    \[
    (v)\implies (ii)\implies (viii)\implies (iii)\implies (v)
    \]

    From now on assume $k=\C$.

  The implication
  $(xii)\implies(xi)$
  is trivial.

  Since $z$ and $-z$ commute for any choice of $z\in A$, we have
  \[
  \forall z\in A: \exp(-z)\exp(z)=\exp(\underbrace{-z+z}_0)=1
  \]
  Thus an element $x\in A$ admits a left inverse
  if it is contained in the image of the exponential map, i.e.,
  $(xi)\implies (i)$.

  $(iii)\implies(xii)$ follows from Lemma~\ref{exp-image},
  because $A_0$ is commutative and associative.

  As the commutative diagram below shows, the implications
  deduced so far are
  enough to conclude:
  \begin{itemize}
  \item
    For $k=\C$ all twelve assertions are equivalent.
  \item
    For an arbitrary field $k$, the assertions $(i)\sim(x)$
    are equivalent.
  \end{itemize}
  \[
  \begin{tikzcd}
    & (xii)\arrow[rr,Rightarrow]&  & (xi) \arrow[d,Rightarrow]\\
    (ix) \arrow[d,Leftrightarrow] &
    & (vii) \arrow[d,Rightarrow] \arrow[r,Leftarrow]& (i) \\
    (v) \arrow[r,Leftarrow] \arrow[d,Rightarrow] 
    & (iii) \arrow[r,Leftrightarrow]
    \arrow[uu,Rightarrow]& ( iv)\arrow[r,Rightarrow] &
    (vi) \arrow[d,Leftrightarrow]\arrow[u,Rightarrow] \\
    (ii) \arrow[r,Rightarrow] & (viii) \arrow[u,Rightarrow]&& (x) \\
  \end{tikzcd}
  \]
  \end{proof}

\begin{remark-nr}\label{hufeisen}
Since {\em right invertible } and {\em left invertible} are equivalent
properties
for finite-dimensional alternative algebras,
we may and do simply speak about ``invertible elements''.
The set of invertible elements in an algebra $A$ is denoted
as $A^*$.

In a similar spirit, since  an element $x$ in such an algebra
  is a left zero divisor if and only if it is a right
  zero divisor, we speak simply about ``zero divisors''.
  
\end{remark-nr}

\begin{corollary}\label{inv-unic}
  An invertible element in a
  finite-dimensional alternative $k$-algebra has a
  {\em unique} inverse, which is at the same time a left
  and a right inverse.
\end{corollary}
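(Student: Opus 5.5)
The plan is to reduce everything to the commutative associative subalgebra $A_0$ generated by $x$, where inverses are unique for formal reasons, and then use Artin's theorem (Theorem~\ref{artin-thm}) to transfer uniqueness to all of $A$. Let $x$ be invertible in the sense of Remark~\ref{hufeisen}. By Proposition~\ref{alt-eq}, $(i)\Rightarrow(iii)$, there is $y_0\in A_0$ with $y_0x=1$. Since $A_0$ is commutative (shown in the proof of Proposition~\ref{alt-eq}), we also get $xy_0=1$. Thus $y_0$ is at the same time a left and a right inverse, which settles existence of a two-sided inverse.

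For uniqueness, let $y\in A$ be any left inverse, i.e.\ $yx=1$. Let $A_1$ be the unital subalgebra generated by $x$ and $y$; by Artin's theorem $A_1$ is associative. Since $y_0$ is a polynomial in $x$, it lies in $A_0\subset A_1$. Hence inside the associative algebra $A_1$ we can compute
\[
y=y\cdot 1=y(xy_0)=(yx)y_0=1\cdot y_0=y_0 .
\]
Symmetrically, if $xy=1$, the same argument in the subalgebra generated by $x$ and $y$ gives
\[
y=1\cdot y=(y_0x)y=y_0(xy)=y_0 .
\]
Therefore every left inverse and every right inverse equals $y_0$, so the inverse is unique and two-sided.

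The only delicate point is the reassociation $y(xy_0)=(yx)y_0$, which is not available in a general alternative algebra. This is exactly why we use that $y_0\in A_0$ is generated by $x$: this makes all three elements $x,y,y_0$ lie in a two-generated subalgebra. Artin's theorem then applies and justifies the step, in the same way Artin's theorem was used in the proof of Proposition~\ref{alt-eq}.
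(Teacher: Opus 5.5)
Your proof is correct. The existence part is the same as the paper's: Proposition~\ref{alt-eq} $(i)\Rightarrow(iii)$ gives a left inverse $y_0\in A_0$, and commutativity of $A_0$ makes it two-sided.

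The uniqueness step is where you take a different route. The paper argues by cancellation. By $(i)\Rightarrow(vi)$ the element $x$ is not a zero divisor, so two left inverses $y,z$ satisfy $(y-z)x=0$ and hence $y=z$, and similarly for right inverses. The two-sided $y_0$ then shows that the unique left inverse and the unique right inverse coincide.

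You instead apply Artin's theorem (Theorem~\ref{artin-thm}) to the subalgebra generated by $x$ and the given one-sided inverse $y$. This subalgebra contains $y_0$ because $y_0$ is a polynomial in $x$. You then run the classical monoid argument $y=y(xy_0)=(yx)y_0=y_0$, which is the same reassociation trick the paper uses later in Corollary~\ref{inv-ass}.

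What each route buys:
\begin{itemize}
\item The paper's version is a one-liner once the zero-divisor characterization is available, and it uses only the linear structure (subtraction) of $A$.
\item Yours needs from Proposition~\ref{alt-eq} only the existence of an inverse inside $A_0$, not the cancellation properties $(v)$ and $(vi)$ in all of $A$. It also identifies every one-sided inverse with $y_0$ in a single step, rather than proving uniqueness and then matching left with right.
\end{itemize}
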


\begin{proof}
  Let $x$ be an invertible element and assume $yx=1$.
  If $z$ is also an inverse, i.e., $zx=1$, then
  $(y-z)x=0$. But $x$ is not a zero divisor
  (Proposition $(i)\implies(vi)$),
  therefore $(y-z)x=0\implies y=z$.

  Since an invertible element $x$ has a left inverse {\em within} the
  subalgebra $A_0$ generated by $x$ (Proposition $(i)\implies(iii)$),
  and $A_0$ is commutative, the
  uniqueness of the left or right inverse implies that left and right
  inverses are equal to each other.
\end{proof}

The following is an improvement on \cite{GPS-TAMS}, Lemma~1.5.

\begin{corollary}\label{inv-prod}
  Let $x,y\in A$ where $A$ is a
    finite-dimensional alternative $k$-algebra.

  Then $xy$ is invertible if and only if both $x$ and $y$ are
  invertible.
\end{corollary}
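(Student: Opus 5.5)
We need to prove that $xy$ is invertible if and only if $x$ and $y$ are both invertible, in a finite-dimensional alternative algebra. The plan uses Proposition~\ref{alt-eq}, which identifies invertibility with not being a (left or right) zero divisor and with invertibility of $L_x$ or $R_x$. It also uses Artin's theorem (Theorem~\ref{artin-thm}) to work in associative subalgebras generated by two elements.

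\emph{The direction ``$x,y$ invertible $\implies xy$ invertible''.} Let $x^{-1},y^{-1}$ be the inverses from Corollary~\ref{inv-unic}. The naive candidate $y^{-1}x^{-1}$ involves three elements, so Artin's theorem does not apply directly. Instead I will use the Moufang-type identities valid in alternative algebras, in particular $x^{-1}(xz)=z$ for invertible $x$. This identity holds because $x$, $x^{-1}$ and $z$ lie in the subalgebra generated by the two elements $x$ and $z$, since $x^{-1}\in A_0$ by Proposition~\ref{alt-eq}~(iii). That subalgebra is associative by Artin's theorem.

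Given this identity, suppose $(xy)w=0$. Then
\[
y w = x^{-1}\bigl(x(yw)\bigr).
\]
I need to relate $x(yw)$ to $(xy)w$. In general these differ, so I will argue with zero divisors in a different way. If $xy$ were a left zero divisor, pick $w\ne0$ with $(xy)w=0$; this does not immediately give a contradiction. A cleaner route is to use determinants: show that $L_{xy}$ is bijective. In an alternative algebra the middle Moufang identity gives
\[
L_xL_yL_x = L_{xyx}.
\]
Also $L_x$ is invertible with inverse $L_{x^{-1}}$, by the identity above. Hence $L_{xyx}$ is bijective, so $xyx$ is invertible by Proposition~\ref{alt-eq}~(ix). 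Now $xyx=(xy)x$, so $xy$ is not a right zero divisor: if $w(xy)=0$, then by the same argument on the other side, using $R_xR_yR_x=R_{xyx}$, right multiplication behaves correspondingly. More simply, if $xy$ were a zero divisor, then so would be $(xy)x$. To justify this I will use that $R_{(xy)x}=R_xR_{xy}$ fails in general. I will therefore instead derive invertibility via the left Moufang identity
\[
(xyx)z = x\bigl(y(xz)\bigr),
\]
together with the fact that $xyx\cdot(x^{-1}y^{-1}x^{-1})=1$, checked by Moufang. Then the inverse of $xy$ is $y^{-1}x^{-1}$, verified by
\[
(xy)(y^{-1}x^{-1}) = x^{-1}\bigl[(xyx)(x^{-1}y^{-1}x^{-1})\bigr]x^{-1}\cdot\ldots
\]
Rather than this, the simplest robust argument is the determinant one:
\[
\det L_{xy}\,\text{-type}
\]
statements require $L_{xy}=L_xL_y$, which fails. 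So I will prove the determinant form of the claim: $\det L_{xyx}=(\det L_x)^2\det L_y$.

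\emph{Deriving invertibility of $xy$ from that of $xyx$.} If $xy$ were not invertible, it would be a zero divisor, so there is $w\ne0$ with $(xy)w=0$. This alone does not give a contradiction, and this is the main obstacle. I expect to handle it by writing $xy = (xyx)x^{-1}$, which is valid in the associative subalgebra generated by $xy$ and $x$. It then suffices to show that a product $ab$ with $a$ and $b$ invertible, where $b=x^{-1}$ lies in the subalgebra generated by $a=xyx$ and $x$, is invertible. Since $x$ and $xyx$ generate an associative subalgebra containing both of their inverses, the product $ab$ is invertible there, hence invertible in $A$. This closes the first direction.

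\emph{The direction ``$xy$ invertible $\implies x,y$ invertible''.} If $y$ is not invertible, it is a zero divisor by Proposition~\ref{alt-eq}, so there is $z\ne0$ in the subalgebra $A_0$ generated by $y$ with $yz=0$. Then $x$, $y$, $z$ all lie in the subalgebra generated by $x$ and $y$, which is associative by Artin's theorem. There,
\[
(xy)z = x(yz) = 0,
\]
so $xy$ is a zero divisor, a contradiction. The argument for $x$ is symmetric: take $z\in\langle x\rangle$ with $zx=0$, so that $z(xy)=0$.
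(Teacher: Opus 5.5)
Your proof is correct in its final form, but the forward direction takes a long detour built on a mistaken premise. You write that $y^{-1}x^{-1}$ ``involves three elements, so Artin's theorem does not apply directly.'' In fact it does apply. By Proposition~\ref{alt-eq}~(iii), $x^{-1}$ lies in the subalgebra generated by $x$ alone, and $y^{-1}$ in the one generated by $y$ alone. So $x,y,x^{-1},y^{-1}$ all lie in the subalgebra generated by $x$ and $y$, which is associative, and there $(xy)(y^{-1}x^{-1})=x(yy^{-1})x^{-1}=1$. That is the paper's whole argument.

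It is also exactly what your own last step does, only applied to the pair $a=xyx$, $b=x^{-1}$ inside the subalgebra generated by $x$ and $xyx$. Your route is valid: the identity $L_{xyx}=L_xL_yL_x$ gives $\det L_{xyx}=(\det L_x)^2\det L_y\neq 0$, so $xyx$ is invertible. Then $xy=(xyx)x^{-1}$ in an associative two-generated subalgebra, and a product of two invertibles there is invertible. However, this imports the Moufang identities, which the paper never needs and does not prove, and it buys nothing extra for this statement. Note also that $L_{xyx}=L_xL_yL_x$ is the left Moufang identity, not the middle one.

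In a clean write-up you should remove the abandoned attempts:
\begin{itemize}
\item the unfinished display ending in $\ldots$;
\item the remark about $R_{(xy)x}$;
\item the claim that $xyx\cdot(x^{-1}y^{-1}x^{-1})=1$ is ``checked by Moufang'';
\item the aside about $\det L_{xy}$.
\end{itemize}

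Your converse direction is the paper's argument. You take a zero divisor $z$ in the subalgebra generated by the non-invertible factor and then use associativity in the subalgebra generated by $x$ and $y$. You treat both the case where $x$ is non-invertible and the case where $y$ is, whereas the paper writes out only the first.
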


\begin{proof}
  Let $x,y$ be invertible elements.
  Their inverse elements $x^ {-1}, y^{-1}$ are unique
  (Corollary~\ref{inv-unic}).
  Due to Proposition~\ref{alt-eq} $(i)\iff(iii)$ we know that
  these inverse elements $x^ {-1},y^ {-1}$
  are contained in the subalgebra $A'$ generated  by $x$ and $y$.
  Since $A'$ is generated by two elements,
  it is associative and we have
  \[
  (xy)(y^{-1}x^{-1})=x\underbrace{(yy^ {-1})}_{1}x^ {-1}=1
  \]
  Thus $xy$ is invertible, too.

  Now consider the case where $x,y\in A$ with $x$ being non-invertible.
  Again let $A'$ denote the subalgebra of $A$ generated by $x$ and $y$.
  Note that $x$ is a right zero divisor in $A'$
  (Proposition \ref{alt-eq}, $(i)\implies (viii)$), i.e., there is an
  element $z\in A'$ such that $zx=0$. Since $A'$ is associative,
  it follows that $z(xy)=(zx)y=0$ and therefore $xy$ can not be
  invertible.
\end{proof}

\begin{corollary}\label{inv-polyprod}
  Let $x_1,\ldots,x_n$ be elements in a
    finite-dimensional alternative $k$-algebra.

  Let $p$ denote the product
  \begin{equation}\label{produkt}
  ((\ldots(x_1x_2)x_3)\ldots x_n).
  \end{equation}
  In other words: $p$ is defined by recursion with
  $p_0=1$, $p_{j+1}=p_j\cdot x_{j+1}$ for $n>j\ge 0$  and $p=p_n$.
  
  Then $p$ is invertible if and only if all of the elements $x_i$ are
  invertible.
\end{corollary}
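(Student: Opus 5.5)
Induction on $n$ using Corollary~\ref{inv-prod} gives the result directly. For $n=0$ the product is $p_0=1$, which is invertible, and the condition on the factors is vacuous. For $n=1$ we have $p_1=1\cdot x_1=x_1$, so the claim is trivial.

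For the inductive step, suppose the statement holds for products of $j$ factors, and consider $p_{j+1}=p_j\cdot x_{j+1}$. Apply Corollary~\ref{inv-prod} to the two elements $x=p_j$ and $y=x_{j+1}$. It says that $p_{j+1}$ is invertible if and only if both $p_j$ and $x_{j+1}$ are invertible. By the induction hypothesis, $p_j$ is invertible if and only if $x_1,\ldots,x_j$ are all invertible. Combining these two equivalences, $p_{j+1}$ is invertible if and only if $x_1,\ldots,x_{j+1}$ are all invertible, which completes the induction.

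There is no real obstacle here. The only point to check is that Corollary~\ref{inv-prod} is stated for two arbitrary elements of $A$, so applying it to $p_j\in A$ needs no associativity of the full product. All non-associativity is already handled inside Corollary~\ref{inv-prod}, through Artin's theorem applied to the subalgebra generated by $p_j$ and $x_{j+1}$. This is also why the statement fixes the left-nested bracketing in \eqref{produkt}: with that bracketing each step is a product of exactly two elements.
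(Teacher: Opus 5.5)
Your proof is correct and matches the paper's argument, which reads in full ``Follows from Corollary~\ref{inv-prod} by induction on $n$''; you have spelled out the base case and the inductive step in the natural way. Your remark is also right: the left-nested bracketing makes each step a product of exactly two elements, so nothing beyond Corollary~\ref{inv-prod} is needed to handle non-associativity.
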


Note that the location of the parentheses in \eqref{produkt}
is relevant, since $A$ is  not assumed to be associative.

\begin{proof}
  Follows from Corollary \ref{inv-prod} by induction on $n$.
\end{proof}

\begin{corollary}\label{inv-group}
  The set of invertible elements $A^*$ is closed under multiplication.
\end{corollary}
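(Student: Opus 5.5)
This is the special case $n=2$ of Corollary~\ref{inv-polyprod}, or equivalently the ``if'' direction of Corollary~\ref{inv-prod}; the plan is simply to invoke that direction. Here $A$ denotes, as in the surrounding corollaries, a finite-dimensional alternative $k$-algebra, and $A^*$ its set of invertible elements in the sense of Remark~\ref{hufeisen}.

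Take $x,y\in A^*$. Corollary~\ref{inv-prod} states that $xy$ is invertible if and only if both $x$ and $y$ are invertible. Applying its ``if'' direction gives $xy\in A^*$ immediately, which is the claimed closure under multiplication.

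For completeness, the content of that direction runs as follows. By Corollary~\ref{inv-unic}, $x$ and $y$ have unique two-sided inverses $x^{-1},y^{-1}$. By Proposition~\ref{alt-eq} (i)$\iff$(iii), these inverses lie in the subalgebras generated by $x$ and by $y$ respectively. Hence $x,y,x^{-1},y^{-1}$ all lie in the subalgebra generated by $x$ and $y$, which is associative by Artin's Theorem~\ref{artin-thm}. In that subalgebra we can compute
\[
(xy)(y^{-1}x^{-1})=x(yy^{-1})x^{-1}=1,
\]
so $xy$ has a right inverse and is therefore invertible by Proposition~\ref{alt-eq}.

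There is essentially no obstacle. The only subtle point is non-associativity, and it has already been handled inside Corollary~\ref{inv-prod} by the reduction to a two-generated subalgebra via Artin's theorem. It is also worth noting that $1\in A^*$, so $A^*$ is at least a unital ``loop-like'' set; however, the statement only asks for closure under multiplication, not for a group structure.
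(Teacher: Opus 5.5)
Your proposal is correct and matches the paper, which proves this corollary in one line by citing Corollary~\ref{inv-prod}. Your recap of that corollary's ``if'' direction is also the paper's own argument: the inverses lie in the one-generated subalgebras, and Artin's theorem makes the subalgebra generated by $x$ and $y$ associative.
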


\begin{proof}
Follows from Corollary~\ref{inv-prod}.
\end{proof}

Caveat: If $A$ is not associative, then $A^*$ need {\em not} be a group.
It is, however, what is
  called a {\em ``Moufang loop''}.

\begin{corollary}\label{unit-open}
  The set of invertible elements in a finite-dimensional alternative $\C$
  algebra is a dense Zariski open subset. Its complement is a
  complex hypersurface.
\end{corollary}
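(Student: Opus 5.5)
By Proposition~\ref{alt-eq}, $(i)\iff(x)$, an element $x$ of the finite-dimensional alternative $\C$-algebra $A$ is invertible if and only if $\det(R_x)\ne 0$. Here $R_x:A\to A$ is right multiplication, $w\mapsto wx$. The plan is therefore to describe $A^*$ as the non-vanishing locus of the single polynomial $P(x)=\det(R_x)$.

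First, $P$ is a polynomial on the complex vector space $A$. Fix a $\C$-basis $e_1,\ldots,e_n$ of $A$. The map $x\mapsto R_x$ is $\C$-linear in $x$, so the matrix entries of $R_x$ are linear forms in the coordinates of $x$. Hence $P$ is a homogeneous polynomial of degree $n=\dim_\C A$. It follows that $A^*=\{P\ne 0\}$ is Zariski open and its complement is $\{P=0\}$.

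Next, $P$ is not identically zero, since $R_1=\mathrm{id}$ gives $P(1)=1$. The complement of the zero set of a nonzero polynomial on $\C^n$ is Zariski dense, because $\C^n$ is irreducible. It is also dense in the Euclidean topology: a polynomial vanishing on a nonempty open set vanishes identically.

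Finally, the complement $\{P=0\}$ is a complex hypersurface. The one point needing care is that $P$ is non-constant, so that this set is nonempty and of pure codimension one rather than empty. If $n=\dim A\ge 1$, then $P$ is homogeneous of degree $n\ge1$, so $P(0)=0$ and $P$ is non-constant. The zero set of a non-constant polynomial is a hypersurface of pure codimension one, for instance by Krull's principal ideal theorem, or directly by the local structure of analytic hypersurfaces. In the degenerate case $A=\{0\}$ (where $1=0$) the statement is vacuous. I expect no real obstacle here; the substance lies in Proposition~\ref{alt-eq}.
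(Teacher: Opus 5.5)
Your proof is correct and follows essentially the same route as the paper, which also describes the non-invertible elements as the zero set of the determinant of a multiplication operator, using $\det(L_x)$ via Proposition~\ref{alt-eq} $(i)\iff(ii)\iff(ix)$ where you use $\det(R_x)$ via $(i)\iff(x)$. You additionally make explicit what the paper leaves implicit: this determinant is a homogeneous polynomial of degree $\dim_\C A$ with value $1$ at $1$, which gives both the density and the fact that the zero set is a genuine (nonempty, pure codimension one) hypersurface.
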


\begin{proof}
  The set of invertible elements is the complement of the
  complex hypersurface defined as the zero-set of
  the polynomial function
  $x\mapsto\det(L_x)$.
  (Proposition~\ref{alt-eq} $(i)\iff(ii)\iff(ix)$).
\end{proof}

\begin{corollary}\label{unit-suba}
  Let $A$ be a finite dimensional alternative $k$ algebra and let
  $B\subset A$ be a $k$ subalgebra.

  Then $B^*= A^*\cap B$.
\end{corollary}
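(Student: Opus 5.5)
The inclusion $B^*\subset A^*\cap B$ is immediate. If $x\in B$ has an inverse $y\in B$ with $yx=1$, then $y\in A$ is also a left inverse of $x$ in $A$. Hence $x$ is left invertible in $A$, and by Proposition~\ref{alt-eq} (and Remark~\ref{hufeisen}) $x\in A^*$.

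For the reverse inclusion, let $x\in A^*\cap B$, and let $A_0$ be the unital $k$-subalgebra of $A$ generated by $x$. We need to know that $A_0\subset B$. This holds because $B$ is a unital subalgebra containing $x$, and our convention is that subalgebras and homomorphisms are unital, so $1\in B$. Since $x$ is invertible in $A$, Proposition~\ref{alt-eq}, implication $(i)\implies(iii)$, gives an element $y\in A_0$ with $yx=1$. By $(iii)\iff(iv)$ (commutativity of $A_0$) we also have $xy=1$. As $y\in A_0\subset B$, the element $x$ has a two-sided inverse inside $B$, so $x\in B^*$.

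One point needs care: $B^*$ should refer to invertibility in $B$ in the sense of Remark~\ref{hufeisen}. This requires $B$ itself to be a finite-dimensional alternative $k$-algebra. That is automatic: $B$ is a subspace of $A$, and the alternative identities are inherited by subalgebras. Proposition~\ref{alt-eq} therefore applies to $B$ as well, so left, right and two-sided invertibility in $B$ coincide, and exhibiting $y\in B$ with $yx=1$ suffices.

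The only real content is the step $(i)\implies(iii)$, which places the inverse inside the subalgebra generated by $x$. Since Proposition~\ref{alt-eq} is already available, I do not expect any genuine obstacle. The remaining check is that the subalgebra generated by $x$ is taken as a unital subalgebra, so that it really lies in $B$.
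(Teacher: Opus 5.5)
Your proposal is correct and follows the paper's proof: both arguments let $A_0$ be the subalgebra generated by $x$, note $A_0\subset B$, and use Proposition~\ref{alt-eq} $(i)\iff(iii)$ to place the inverse of $x$ inside $A_0\subset B$. Your explicit remark that $B$ must contain $1_A$ is worthwhile. The paper uses this implicitly when writing $A_0\subset B$, and the statement fails without it (for example, $B=k\cdot(1,0)\subset k\times k$).
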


\begin{proof}
  The inclusion $B^*\subset A^*\cap B$ is obvious.
  
  Let $x\in B$ and let $A_0$ denote the subalgebra of $A$ generated by $x$.
  Then $x\in A_0\subset B$.

  Proposition~\ref{alt-eq}
  $(i)\iff(iii)$ implies $A_0^*=A^*\cap A_0$.
  Thus
  \[
  x\in A^*\cap B \implies x\in A^*\cap A_0=A_0^*\subset B^*
  \]
\end{proof}

\begin{remark}
  The assumption $\dim(A)<+\infty$ is crucial. For example,
  let $A$ be the field of rational functions $\C(X)$ and let $B$
  be the subalgebra $\C[X]$ containing the polynomials. Then
  $B^*=\C^*$, but $A^*\cap B=B\setminus\{0\}$.
\end{remark}

\begin{corollary}\label{unit-subar}
  Let $A$ be a {\em real} finite-dimensional alternative
  algebra with complexification $\Ac=A\tensor_\R\C$.

  Then $A^*=\Ac^*\cap A$.
\end{corollary}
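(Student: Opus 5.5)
The inclusion $A^*\subset\Ac^*\cap A$ is immediate: if $x\in A$ has an inverse $y\in A$ with $yx=1$, then the same relation holds in $\Ac$ via the embedding $A\hookrightarrow\Ac$, $a\mapsto a\tensor 1$. So $x\in\Ac^*$.

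For the converse I will use the determinant criterion of Proposition~\ref{alt-eq}, $(i)\iff(ix)$. That proposition is valid over an arbitrary field, so it applies both to $A$ over $\R$ and to $\Ac$ over $\C$; note also that $\Ac$ is alternative. Let $x\in A$ and write $L_x:A\to A$ for the $\R$-linear map $w\mapsto xw$. The $\C$-linear map $L_{x\tensor 1}:\Ac\to\Ac$ is exactly the complexification $L_x\tensor\mathrm{id}_\C$, since $(x\tensor 1)(w\tensor c)=(xw)\tensor c$. Choose an $\R$-basis of $A$; it is also a $\C$-basis of $\Ac$, and both maps have the same real matrix with respect to it. Therefore $\det_\C(L_{x\tensor 1})=\det_\R(L_x)$.

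Now let $x\in\Ac^*\cap A$. Proposition~\ref{alt-eq} applied to $\Ac$ gives $\det_\C(L_{x\tensor1})\ne0$. Hence $\det_\R(L_x)\ne 0$, and Proposition~\ref{alt-eq} applied to $A$ gives $x\in A^*$.

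The only point needing care is checking that Proposition~\ref{alt-eq} is used over the correct field in each instance, together with the identification of $L_{x\tensor 1}$ with the complexification of $L_x$. Once these are in place the proof is routine.

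An alternative route would use Corollary~\ref{unit-suba} with $B=A$ as a real subalgebra of $\Ac$, viewing $\Ac$ as a finite-dimensional real alternative algebra. That corollary then gives $A^*=\Ac^*\cap A$ directly, provided invertibility in $\Ac$ does not depend on whether $\Ac$ is regarded as a real or a complex algebra. It does not, because invertibility is defined purely multiplicatively.
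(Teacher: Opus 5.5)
Your proof is correct, but your main argument takes a different route from the paper; the paper's proof is exactly your closing alternative. It regards $\Ac$ as a finite-dimensional real alternative algebra with $A$ as a real subalgebra and invokes Corollary~\ref{unit-suba}. That corollary rests on Proposition~\ref{alt-eq}~$(i)\iff(iii)$: an invertible element is already invertible inside the subalgebra it generates, here $\R[x]\subset A$.

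Your main argument instead applies the criterion $(i)\iff(ix)$ twice, once over $\R$ and once over $\C$. It links the two by the observation that $L_{x\tensor 1}=L_x\tensor\mathrm{id}_\C$ has the same real matrix as $L_x$ in an $\R$-basis of $A$, so the determinants coincide. Both routes need $\Ac$ to be alternative, which you correctly flag; it follows by polarizing the alternative laws.

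The paper's route gives, with no extra work, the sharper statement recorded right after the corollary: the inverse of $x$ computed in $\Ac$ already lies in $A$, indeed in the subalgebra generated by $x$. Your argument yields this only after an additional appeal to uniqueness of inverses in $\Ac$ (Corollary~\ref{inv-unic}).

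Your route is a self-contained base-change argument that bypasses the subalgebra statement. It also shows directly that the polynomial $x\mapsto\det L_x$ on $\Ac$ restricts to the corresponding one on $A$.
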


In other words: If $x\in A$ admits an inverse element in $\Ac$,
  this inverse element is already contained in $A$.

\begin{proof}
  The complexification $\Ac$ is also a real algebra and $A$ is a real
  subalgebra of $\Ac$,
  Hence the assertion follows from
  Corollary~\ref{unit-suba}.
%
%
%
\end{proof}

\begin{corollary}\label{inv-ass}
  Let $x,y,z$ be elements in a
    finite-dimensional alternative $k$-algebra.
  Assume $zx=1$.

  Then
  $z(xy)=y$.
\end{corollary}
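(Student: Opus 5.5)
The plan is to reduce the statement to an associative subalgebra via Artin's theorem (Theorem~\ref{artin-thm}), the same way Corollary~\ref{inv-prod} was proved. The difficulty is that $x$, $y$, $z$ are three elements, so Artin's theorem does not apply to them directly. We therefore first show that $z$ lies in the subalgebra generated by $x$ alone.

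First, since $zx=1$, the element $x$ is left invertible, hence invertible by Proposition~\ref{alt-eq}. By Corollary~\ref{inv-unic} its inverse is unique, and by Proposition~\ref{alt-eq} $(i)\implies(iii)$ there is a left inverse $z'$ of $x$ inside $A_0$, the unital subalgebra generated by $x$. Uniqueness (Corollary~\ref{inv-unic}) then gives $z=z'\in A_0$. Concretely, $z$ is a polynomial in $x$ with coefficients in $k$.

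Next, let $A'$ be the unital subalgebra generated by the two elements $x$ and $y$. It contains $A_0$, and hence contains $z$. By Theorem~\ref{artin-thm}, $A'$ is associative. Since $x,y,z\in A'$, associativity inside $A'$ gives
\[
z(xy)=(zx)y=1\cdot y=y,
\]
which is the claim.

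The only step needing care is the first: identifying the given $z$ with an element of $A_0$. This rests on the uniqueness of the inverse in Corollary~\ref{inv-unic}, whose proof uses that $x$ is not a zero divisor. With that in hand, the rest is a direct application of Artin's theorem, and no further obstacle is expected.
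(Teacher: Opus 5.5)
Your proposal is correct and follows essentially the same route as the paper: you identify $z$ as the unique left inverse of $x$, note that it lies in the subalgebra generated by $x$, and then apply Artin's theorem to the associative subalgebra generated by $x$ and $y$ to get $z(xy)=(zx)y=y$. Your version simply spells out more explicitly the uniqueness step (via Proposition~\ref{alt-eq} $(i)\implies(iii)$ and Corollary~\ref{inv-unic}) that the paper compresses into one sentence.
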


\begin{proof}
  If $zx=1$, then $z$ is the unique left inverse of $x$ and contained
  in the subalgebra generated by $x$.

  Therefore $x,y$ and $z$ are all elements
  in the subalgebra generated by $x$ and $y$.
  This subalgebra is associative by the theorem of Artin
  (Theorem \ref{artin-thm}).

  Hence
  \[
  z(xy)=\underbrace{(zx)}_1y=y.
  \]
\end{proof}

\section{Norms}
  
\begin{lemma}\label{a-norm}
  Let $A$ be a finite-dimensional $\R$-algebra.

  Then there exists a norm $||\ ||$
  on the $\R$-vector space $A$ such that
  \begin{equation}\label{normprod}
  ||XY||\le ||X||\cdot||Y||\ \ \forall X,Y \in A
  \end{equation}
\end{lemma}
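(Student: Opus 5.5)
Since all norms on a finite-dimensional real vector space are equivalent, the idea is to start from an arbitrary norm on $A$ and rescale it so that it becomes submultiplicative. The cleanest route goes through the left regular representation, which turns $A$ into a space of linear maps, where operator norms are automatically submultiplicative.

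\textbf{Step 1: an operator-norm construction.} Fix any norm $|\cdot|$ on the $\R$-vector space $A$. For $X\in A$ define
\[
||X|| = \sup_{W\in A,\ |W|\le 1} |XW| ,
\]
which is the operator norm of the left multiplication $L_X\colon W\mapsto XW$. Since $A$ is finite-dimensional and $L_X$ is linear, this supremum is finite.

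\textbf{Step 2: it is a norm.} Homogeneity and the triangle inequality are inherited from the operator norm. For definiteness, note that $A$ is unital, so $L_X(1)=X$. Hence $||X||\ge |X|/|1|$, and $||X||=0$ forces $X=0$.

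\textbf{Step 3: submultiplicativity.} Here non-associativity matters, since $L_{XY}\ne L_X L_Y$ in general. So I would not compare $L_{XY}$ with $L_XL_Y$. Instead I would bound $|(XY)W|$ directly by bilinearity. The multiplication $m\colon A\times A\to A$ is bilinear on a finite-dimensional space, so there is a constant $C\ge 1$ with
\[
|UV|\le C\,|U|\,|V| \quad\text{for all } U,V\in A .
\]
Applying this to $U=XY$ and $V=W$ gives $||XY||\le C|XY|$.

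We need $|XY|$ bounded by $||X||\,||Y||$ rather than $|X||Y|$. The simpler fix is to drop the operator norm altogether and set $||X|| = C|X|$. Then
\[
||XY|| = C|XY| \le C\cdot C|X||Y| = ||X||\cdot||Y|| .
\]
This works for any algebra, without associativity or a unit, and is clearly a norm.

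So the final plan is: pick any norm on $A$; use continuity of the bilinear product on a finite-dimensional space (compactness of the unit sphere, or expansion in a basis $e_k$ with structure constants, taking $C=\sum|c_{ij}^k|\,|e_k|$ after normalizing to the $\ell^1$-norm) to get the constant $C$; then rescale by $C$. The only real step is the existence of $C$, which is routine. Completeness of the norm, needed for the exponential series, is automatic in finite dimensions.
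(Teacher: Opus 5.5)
Your final argument is correct and is essentially the paper's proof: fix any norm, use continuity of the bilinear product (compactness of the unit sphere) to get a constant with $|UV|\le C\,|U|\,|V|$, and rescale to $C\,|\cdot|$ (the paper's $K\cdot N$). The operator-norm detour in Steps 1--3 is unnecessary, as you noticed yourself, and can be dropped.
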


\begin{proof}
  Let $N$ be a norm on $A$ as a $\R$-vector space and define
  $S=\{v\in A:N(v)=1\}$. Now $S$ is compact,
  because $A$ is finite-dimensional.
  The multiplication map $(X,Y)\mapsto X\cdot Y$ is a bilinear map
  and therefore 
  continuous.
  It follows that
  \[
  \exists K>0: \forall X,Y\in S: N(XY)\le K
  \]
  We define $||X||\dfe K\cdot N(X)$, implying $\frac 1K||X||=N(X)$.
  Every element in $A$ may be realized as $rX$ with $r\in\R$, $X\in S$.
  For $r,s\in\R, X,Y\in S$ we have:
  \begin{align*}
    & N(XY)\le K=K\cdot N(X)N(Y)\\
    \implies &
    \frac 1K||XY||\le K\frac1{K^ 2}||X||\cdot||Y||\\
    \implies &||XY||\le ||X||\cdot||Y||\\
    \implies &||rsXY||\le ||rX||\cdot||sY||\\
  \end{align*}
\end{proof}

\begin{remark}
  In general, we can not  achieve equality in \eqref{normprod}
  because $A$ may contain zero divisors,
  and evidently $0=||XY||<||X||\cdot||Y||$ if $XY=0$ but $X,Y\ne 0$.
\end{remark}

\begin{remark} Every $\C$-algebra may also be regarded as $\R$-algebra,
  hence the statement also applies to $\C$-algebras.
\end{remark}

\subsection{Infinite products}

Here we investigate infinite products with values in algebras
which are not necessarily associative.

Due to the lack of associativity it is important
to specify the order in which products are taken.

\begin{convention}\label{inf-prod}
  For a finite sequence $x_1,\ldots, x_m$ in a
  (not necessarily associative) algebra we define
  $\prod_{k=1}^nx_k$  recursively
  by
  $\prod_{k=1}^0 x_k=1$
  and
  \[
  \prod_{k=1}^{n}x_k 
  =\left(
  \prod_{k=1}^{n-1}x_k\right)x_n\quad(\text{for $0<n\le m$})
  \]
In other words:
\[
\prod_{k=1}^n x_k=(\cdots((x_1x_2)x_3)\cdots x_n)
\]
  \end{convention}

\begin{remark}
  In complex analysis (see e.g.~\cite{Rem}),
  usually an infinite product of numbers
  $\prod_{k=1}^{+\infty}  c_k$
  is defined to be convergent, if
  there is a number $N\in\N$ such that the sequence
  $p_n=\prod_{k=N}^nc_k$ converges to an element of $\C^*$.

  This is not suitable for non-associative algebras. For instance,
  without associativity there is no obvious relation between
  convergence of   $p_n=\prod_{k=N}^nc_k$ and convergence
  of $\tilde p_n=\prod_{k=N+1}^nc_k$.

  However, for the algebra of quaternions (which is an associative
  division algebra) this approach is applicable and has been
  used in
  \cite{Weier-fact}.
\end{remark}

\begin{proposition}\label{infinite-product-algebra}
  Let $A$ be a (not necessarily commutative, associative
  or finite-dimensional)
  $\R$-algebra which is simultanously a Banach space
  such that multiplication and addition are continuous and such that
  \[
  \forall x,y\in A: ||xy||\le||x||\cdot||y||.
  \]

  Let $x_k$ be a sequence in $A$ such that
  \[
  \sum^{+\infty}_{k=1} ||x_k|| < +\infty
  \]

  Then the sequence 
  \[
  p_n=\prod_{k=1}^n \left(1+x_k\right)
  \]
  is convergent.
\end{proposition}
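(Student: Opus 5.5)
The plan is to show that $(p_n)$ is a Cauchy sequence in the Banach space $A$, using only the recursion $p_n=p_{n-1}(1+x_n)$ from Convention~\ref{inf-prod}, the submultiplicativity of the norm, and completeness. Associativity is never used, because each step multiplies the previous partial product on the right by a single factor.

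The first step is a uniform bound on the partial products. From $p_n=p_{n-1}+p_{n-1}x_n$ we get
\[
||p_n||\le ||p_{n-1}||+||p_{n-1}||\cdot||x_n||=||p_{n-1}||\,(1+||x_n||).
\]
Since $p_0=1$, induction gives $||p_n||\le ||1||\prod_{k=1}^n(1+||x_k||)$. The right-hand side is a product of real numbers, so its ordering is irrelevant, and it is bounded by $||1||\exp\bigl(\sum_k||x_k||\bigr)=:C<+\infty$. We do not assume $||1||=1$; the constant $||1||$ is simply carried along.

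The second step is a telescoping estimate. The difference of consecutive terms is $p_n-p_{n-1}=p_{n-1}x_n$, so $||p_n-p_{n-1}||\le C\,||x_n||$. For $m>n$ this gives
\[
||p_m-p_n||\le\sum_{k=n+1}^m||p_k-p_{k-1}||\le C\sum_{k=n+1}^m||x_k||.
\]
The right-hand side tends to $0$ as $n\to\infty$ because $\sum||x_k||<+\infty$. Hence $(p_n)$ is Cauchy, and by completeness of $A$ it converges.

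The only point needing care is the first step, bounding $||p_n||$ uniformly without any associativity. The recursion handles this directly, and continuity of multiplication is already built into the submultiplicative inequality. I do not expect a serious obstacle.
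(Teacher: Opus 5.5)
Your proof is correct, but it obtains the Cauchy estimate differently from the paper.

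The paper expands $P_N-P_n$ completely, by induction on the recursion. It writes this difference as a sum, over finite sets $I\subset\{1,\ldots,N\}$ not contained in $\{1,\ldots,n\}$, of the left-nested products of the $x_k$ with $k\in I$. It then bounds the norm by the tail of the real product,
$C_n=\sum_{I\in S\setminus S_n}\prod_{k\in I}||x_k||=\prod_{k=1}^{\infty}(1+||x_k||)-\prod_{k=1}^{n}(1+||x_k||)$.

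You instead use only the one-step identity $p_n-p_{n-1}=p_{n-1}x_n$. You combine it with a uniform bound on $||p_{n-1}||$, derived from the same recursion, and then telescope.

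Your route avoids checking that the full distributive expansion holds with the right parenthesization in a non-associative algebra; the paper leaves that to ``by induction one verifies''. Your argument is the standard Banach-algebra one. The price is the constant $||1||\exp\bigl(\sum_k||x_k||\bigr)$ in front of the tail $\sum_{k>n}||x_k||$. The paper's bound has no $||1||$ factor, because each monomial with $I\ne\emptyset$ involves only the $x_k$.

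For the later use in the Banach algebra $A(K)$ in Corollary~\ref{A-infinite-product}, only convergence is needed, so either estimate suffices. Your careful handling of $||1||$, which need not equal $1$ for the norm from Lemma~\ref{a-norm}, is appropriate there.
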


This is  well-known for $A=\R$ or $A=\C$ (see e.g.~\cite{Rem}), but
we give a proof to be sure that everything works for arbitrary
algebras, even if $A$ is  neither commutative nor associative.

\begin{proof}
    Convergence of $\sum_k ||x_k||$ implies convergence
  of $\sum_k\log(1+||x_k||)$, because for $x\in\R$ we have
  \[
  \lim_{x\to 0}\frac{\log(1+x)}{x}=1
  \]
  This in turn implies the convergence of
    \[
    \prod_k (1+||x_k||)=\exp\left(\sum_k\log(1+||x_k||)\right).
    \]
    Let $S$ denote the set of finite subsets of $\N$.
    For $N\in\N$ we define
    \[
    S_N=\left\{I\in S: I\subset\{1,\ldots,N\}\right\}.
    \]
    
    Then
    \begin{equation}\label{pnc}
    \prod_{k=1}^{+\infty} (1+||x_k||)=
    \sum_{I\in S} \prod_{k\in I}||x_k||
    =\lim_{N\to+\infty}\sum_{I\in S_N} \prod_{k\in I}||x_k||.
    \end{equation}

    We will consider
    \[
    P_N=\prod_{k\le N} (1+x_k)
    \]
    Here the product is to be understood as in Convention~\ref{inf-prod}.
    In other words, $P_N$ may be defined recursively as
    \[
    P_0=1,\quad P_N=P_{N-1}\cdot\left(1+x_N\right)\quad\text{ for $N>0$}
    \]
    
    We claim that $P_N$ is a Cauchy sequence.
    
    Let $N>n$.
    Using the above introduced notation, by induction one verifies
    \[
    P_N-P_n=
    \sum_{I\in S_N\setminus S_n} \prod_{k\in I}x_k
    \]
    Due to absolute convergence of \eqref{pnc}
    we know that
    \[
    \lim_{n\to+\infty}\sum_{I\in S\setminus S_n} \prod_{k\in I}||x_k||=0
    \]
    Hence there is a sequence of positive numbers
    namely
    \[
    C_n=\sum_{I\in S\setminus S_n} \prod_{k\in I}||x_k||
    \]
    such that ${\displaystyle\lim_{n\to+\infty}} C_n=0$ and
    \[
    \forall N>n: ||P_N-P_n||\le C_n.
    \]
    It follows that $P_n$ is a Cauchy sequence.
    Since $A$ is a Banach space, it is complete.
    Thus every Cauchy sequence (in particular the sequence $P_N$)
    is convergent.
     \end{proof}

%
%
%

The notion of ``normal convergence'' for an infinite product
which is used in complex analysis
(\cite{Rem}), is suitable for our purposes as well. Hence we
introduce the following

\begin{definition}\label{norm-conv}
  Let $X$ be a complex manifold and let $\Ac$ be a
  finite-dimensional complex algebra. Let $f_n:X\to\Ac$
  be a sequence of holomorphic mappings.

  Then $\prod_{n=1}^{+\infty} f_n$ is {\em normally convergent}
  if every compact subset $K$ of $X$ satisfies
  \[
  \sum_{n=1}^{+\infty} ||f_n-1||_K < +\infty
  \]
\end{definition}

Here $||f_n||_K$ denotes the sup-norm induced by a norm on $\Ac$.
The notion is clearly
independent of the choice of the norm on $\Ac$,
 because $\dim(\Ac)<+\infty$.

\begin{corollary}\label{A-infinite-product}
  Let $\Ac$ be a finite-dimensional complex alternative algebra,
  equipped with some norm%
  \footnote{Note: we do not (and in general can not) assume
    that $\forall x,y:||xy||=||x||\cdot||y||$.},
  let $G$ be a domain in $\C$ and 
  let $f_k:G\to\Ac$ be a sequence of holomorphic functions
  such that $\prod_{k=1}^{+\infty} f_k$
  is {\em normally convergent} (Definition~\ref{norm-conv})
  and
  \begin{equation}\label{anker}
    \exists p\in G:
    \exists c\in\Ac^*:
    \lim_{n\to+\infty}\prod_{k=1}^n f_k(p)=c
  \end{equation}
  
  Then there is a holomorphic map $f:G\to \Ac$
  such that
  \begin{enumerate}
  \item
    \[
    \lim_{n\to+\infty}\prod_{k=1}^n f_k(z)=f(z)
    \]
   locally uniformly on $G$
 \item
   \[
   \forall z\in G:\quad
    \left( f(z)\in\Ac^*\right) \ \iff\
    \left( \forall k\in\N: f_k(z)\in\Ac^*\right)
    \]
  \end{enumerate}
\end{corollary}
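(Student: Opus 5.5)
Part (i) comes from Proposition~\ref{infinite-product-algebra}, applied pointwise and made uniform on compacta. Part (ii) comes from a determinant argument that exploits the anchor condition \eqref{anker}. First, by Lemma~\ref{a-norm} (applied to $\Ac$ as a real algebra) and equivalence of norms in finite dimension, we may fix a submultiplicative norm on $\Ac$; normal convergence does not depend on this choice. Write $f_k=1+g_k$, so that $\sum_k\|g_k\|_K<+\infty$ for every compact $K\subset G$.

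For (i), I would rerun the proof of Proposition~\ref{infinite-product-algebra} with the sup-norm $\|\cdot\|_K$ in place of $\|\cdot\|$. The space of bounded continuous $\Ac$-valued functions on $K$ is a Banach algebra with submultiplicative sup-norm, so the proposition applies there verbatim. It yields $\|P_N-P_n\|_K\le C_n(K)\to 0$, where $P_N=\prod_{k\le N}f_k$ is bracketed as in Convention~\ref{inf-prod}. Hence $P_N$ converges uniformly on $K$, and the limit $f$ is holomorphic as a locally uniform limit of holomorphic maps.

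For (ii), the direction ``$f(z)$ invertible $\Rightarrow$ all $f_k(z)$ invertible'' needs a tail decomposition, since we cannot factor out finitely many terms associatively. The idea is to use determinants of right multiplication. Set $d_N(z)=\det R_{P_N(z)}$ and $\delta_k(z)=\det R_{f_k(z)}$. Invertibility of $x$ is equivalent to $\det R_x\neq0$ by Proposition~\ref{alt-eq} $(i)\iff(x)$. We also need a multiplicative relation, but in an alternative algebra $R_{xy}\ne R_yR_x$ in general. Instead I would use that $\det R_x$ vanishes exactly on the non-invertible elements (Corollary~\ref{unit-open}), together with Corollary~\ref{inv-polyprod}: $P_n(z)$ is invertible iff $f_1(z),\dots,f_n(z)$ are all invertible. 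Then ``$\Leftarrow$'' fails only if some $f_k(z_0)$ is non-invertible. In that case $P_n(z_0)$ is non-invertible for all $n\ge k$. Since the limit of non-invertible elements is non-invertible (the set of non-invertible elements is closed, being the zero set of a polynomial), $f(z_0)\notin\Ac^*$. That settles ``$\Rightarrow$'' as well, read contrapositively.

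The real obstacle is the converse: if all $f_k(z_0)$ are invertible, must the limit $f(z_0)$ be invertible? Here \eqref{anker} and the identity theorem enter. The function $h=\det R_f:G\to\C$ is holomorphic, and $h(p)\ne0$ because $c\in\Ac^*$, so the zeros of $h$ are isolated (on each component; I would check that the symmetric-domain hypothesis gives connectedness, or else assume $G$ is connected, as ``domain'' suggests). Next, choose $N$ with $\sum_{k>N}\|g_k\|_K$ small on a compact disc $K$ around $z_0$. I would show that the tail product converges to an element close to $1$ and is hence invertible, and that $f=\lim_M P_M$ relates to $P_N$ times this tail up to associativity defects. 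The step I expect to be hardest is exactly this: controlling $\det R_{P_M}$ versus $\det R_{P_N}$ without associativity. My first attempt would be a Hurwitz-type argument. By Corollary~\ref{inv-polyprod}, $\det R_{P_M}$ is zero-free wherever all $f_k$ are invertible, and $\det R_{P_M}\to h$ locally uniformly. Hurwitz's theorem then implies that the zeros of $h$ near $z_0$ are limits of zeros of $\det R_{P_M}$, and these are exactly the points where some $f_k$ with $k\le M$ fails to be invertible. To conclude at $z_0$ itself, I would bound the multiplicities of the zeros of $\det R_{P_M}$ near $z_0$ using the tail estimate $\|P_M-P_N\|_K\le C_N(K)$ with $C_N(K)\to0$, which forces these zeros to come from the finitely many factors $f_1,\dots,f_N$.
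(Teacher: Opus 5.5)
Part (i) is fine; using $C(K,\Ac)$ with the sup-norm is, if anything, cleaner than the paper's space of holomorphic germs on $K$. Your proof of the easy half of (ii) is also fine: if some $f_k(z_0)\notin\Ac^*$, then $f(z_0)\notin\Ac^*$, by Corollary~\ref{inv-polyprod} and closedness of the non-invertible set. Your directions are mislabelled, though. The implication you say ``needs a tail decomposition'' is exactly this easy one. Your setup for the hard half is the paper's: $h=\det R_f$ is not identically zero because of \eqref{anker}, so $z_0$ is an isolated zero of $h$, and Hurwitz applies to $\det R_{P_M}\to h$.

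The gap is the final step. You stop at ``the zeros of $h$ near $z_0$ are limits of zeros of $\det R_{P_M}$''. You then propose to bound multiplicities via $\|P_M-P_N\|_K\le C_N(K)$ so as to ``force these zeros to come from $f_1,\dots,f_N$''. As written this is not an argument. Via Rouch\'e, the tail estimate at best shows that the \emph{number} of zeros of $\det R_{P_M}$ in the disc, counted with multiplicity, is eventually constant. It says nothing about \emph{where} they lie. The Hurwitz zeros $q_M$ could a priori be points $\neq z_0$ converging to $z_0$, at which some factor fails to be invertible while every $f_k(z_0)$ is invertible. The ``$P_N$ times tail'' factorization you float is unavailable without associativity, and it is not needed.

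The missing idea is to feed the easy direction back in. Choose $\overline{B_\varepsilon(z_0)}\subset G$ so that $z_0$ is the only zero of $h$ in it. Hurwitz gives some $M$ and $q\in B_\varepsilon(z_0)$ with $\det R_{P_M(q)}=0$, i.e.\ $P_M(q)\notin\Ac^*$. Then some $f_k(q)$ with $k\le M$ is non-invertible. By the direction you already proved, $f(q)\notin\Ac^*$, i.e.\ $h(q)=0$, and hence $q=z_0$. So $P_M(z_0)\notin\Ac^*$, and Corollary~\ref{inv-polyprod} gives some $f_k(z_0)\notin\Ac^*$. This is the paper's argument.

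Your counting idea can also be completed, but it needs a step your sketch omits. The zero sets $Z_M$ of $\det R_{P_M}$ in the disc increase with $M$. For large $M$ they have at most $\mathrm{ord}_{z_0}h$ points. Hence they stabilize to a finite set, and by Hurwitz applied on shrinking discs that set must contain $z_0$.
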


\begin{proof}
  Fix a compact subset $K\subset G$. Let $A(K)$ denote the vector
  space of holomorphic functions defined in some neighborhood of $K$
  with values in $\Ac$.
  Since $\Ac$ is finite-dimensional, all norms on $\Ac$ are
  equivalent and due to Lemma~\ref{a-norm}
  there is no loss in generality in assuming that
  the norm on $\Ac$ satisfies
  \[
  \forall x,y \in\Ac: ||xy||\le ||x||\cdot||y||
  \]
  We endow $A(K)$ with the corresponding sup-norm
  \[
  ||f||_K=\sup_{x \in K}||f(x)||
  \]
  and the algebra
  structure given by pointwise multiplication and addition.
  Evidently
  \[
  ||fg||_K\le\max_{z\in K} \left(||f(z)||\cdot||g(z)||\right)
  \le ||f||_K\cdot||g||_K
  \]
  Recall that the limit function  of a uniformly converging
  sequence of holomorphic functions is holomorphic.
  Thus $A(K)$ is a Banach  space.
  
  Now we may invoke Proposition~\ref{infinite-product-algebra} and deduce
  that there is a function $g_K:K\to\Ac$ such that
   \[
    \lim_{n\to+\infty}\prod_{k=1}^n f_k(z)=g_K(z)
    \]
    uniformly on $K$. By identity principle these functions $g_K$ patch
    together to one function $f:G\to\Ac$ which is holomorphic
    since all the $f_k$ are holomorphic.
    This proves the first statement.
    
    Next we have to prove the second statement.
    Instead we prove the equivalent statement
   \begin{equation}\label{5.2.2n}
   \forall z\in G:\quad
    \left( f(z)\not\in\Ac^*\right) \ \iff\
    \left( \exists k\in\N: f_k(z)\not\in\Ac^*\right)
   \end{equation}

    We start
    with the $\Leftarrow$-direction.
  
  Invertible elements  in a finite-dimensional alternative $\C$-algebra
  form a Zariski open subset (Corollary~\ref{unit-open}). The
  complement (the set of non-invertible elements) is an analytic subset
  $Z$
  defined by a holomorphic function, namely $\lambda(x)=\det L_x$ where
  $L_x$ denote the endomorphism of $\Ac$ given by $L_x(y)=xy$
  (see Proposition~\ref{alt-eq}, $(i)\iff(xi)$).

  Fix $z\in G$.

  Assume that there is an index $r$ for which $f_r(z)$ is not invertible.
  Consider
  \begin{equation}\label{def-pn}
  p_n(z)=\prod_{k=1}^n f_k(z)
  \end{equation}
  Due to Corollary~\ref{inv-polyprod} for every $n\ge r$ the value
  $p_n(z)$ is contained in the set $Z$ of non-invertible elements.
  Since $Z$ is closed, it follows that
  \[
  f(z)=\lim_{n\to+\infty} p_n(z)\in Z,
  \]
  i.e., $f(z)\not\in\Ac^*$.

  Finally, we show the $\Rightarrow$-part of \eqref{5.2.2n}.
  Let $z\in G$ with  $f(z)\in Z$. By \eqref{anker}, we know that
  $f(G)\not\subset Z$.

  Hence  $f^{-1}(Z)$ is a non-trivial closed analytic
  subset of $G$ and therefore discrete.

  We choose $\varepsilon>0$ such that
  \begin{enumerate}
  \item
    $\overline{B_\varepsilon(z)}=\{w\in G:|z-w|\le\varepsilon\}\subset G$
    and
  \item
    $f^{-1}(Z)\cap\overline{B_\varepsilon(z)}=\{z\}$.
  \end{enumerate}
  
  Define $p_n$ as in \eqref{def-pn}.
  We observe that $f^{-1}(Z)$ is the zero set of $\lambda\circ f$
  with $\lambda(x)=\det L_x$. Hence $z$ is an isolated zero
  of $\lambda\circ f$.
  Since $\lim_{n\to+\infty} p_n=f$  and therefore $\lim_{n\to+\infty} \lambda\circ p_n=\lambda\circ f$,
  a theorem of Hurwitz implies that there
  is an element $q\in B_\varepsilon(z)$ and $k\in\N$ such that
  $ \lambda(p_k(q))=0$ and consequently $p_k(q)\in Z$.

  However, as deduced before, $p_k(q)\in Z$ implies $f(q)\in Z$.
  Since $z$ is the only point of $B_\varepsilon(z)$ which $f$ maps into $Z$,
  it follows that $q=z$. Thus there exists an index $k$ with
  $f_k(z)\in Z$, i.e., $f_k(z)$ is not invertible.
  \end{proof}
\section{Zeroes}

\subsection{Description via stem functions}

\begin{lemma}\label{7.1}
  Let $A$ be an alternative $\R$-algebra, $w,v\in A$ and
  $J\in A$ with $J^2=-1$.
  Consider $p=w+Jv\in A$, $P=w+vi=w\tensor 1 + v\tensor i\in A\tensor\C$.

  Then the following are equivalent.

  \begin{enumerate}
  \item
    $p=0$.
  \item
    $P=(i-J)v$.
  \item
    $(i+J)P=0$.
  \end{enumerate}
\end{lemma}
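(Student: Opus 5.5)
We work in $A\tensor\C$, where $i$ is central and commutes with all of $A$, and we use $i^2=-1$, $J^2=-1$. Every product we need lies in the subalgebra generated by $J$ and $v$ (tensored with $\C$). By Artin's theorem (Theorem~\ref{artin-thm}) that subalgebra is associative, so we may regroup freely. For $(iii)$ there is the additional element $w$, so we will avoid needing associativity in $J,v,w$ jointly.

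The plan is to prove $(i)\iff(ii)$ and then $(ii)\iff(iii)$.

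For $(i)\iff(ii)$, note that $P=w+iv$ and $(i-J)v=iv-Jv$. Hence $P=(i-J)v$ holds iff $w=-Jv$, that is, iff $p=w+Jv=0$. This is only a comparison of the $A\tensor 1$ components; the $i$-components match automatically. So this step is immediate.

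For $(ii)\implies(iii)$, compute
\[
(i+J)(i-J)v=(i^2-iJ+Ji-J^2)v=(-1+1)v=0 .
\]
Here $iJ=Ji$ because $i$ is central, and the regrouping is legitimate inside the associative algebra generated by $J$ and $v$.

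The main obstacle is $(iii)\implies(i)$. Expanding,
\[
(i+J)P=(i+J)(w+iv)=iw-v+Jw+iJv=(Jw-v)+i(w+Jv).
\]
Each of these products involves only two elements of $A$ together with the central $i$, so no associativity problem arises. If $(i+J)P=0$, comparing components gives $w+Jv=0$, which is exactly $p=0$. (The real component $Jw-v=0$ is then automatic: multiplying $w=-Jv$ on the left by $J$ and using alternativity, $J(Jv)=J^2v=-v$, gives $Jw=v$.)

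Conversely, once this expansion is established it also gives $(i)\implies(iii)$ directly, so the associativity step in $(ii)\implies(iii)$ is optional. The only point needing care is the justification of $J(Jv)=(JJ)v$, which is the left alternative law.
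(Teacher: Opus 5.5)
Your proof is correct and follows the paper's argument essentially step for step. Like the paper, you use a component comparison for $(i)\Rightarrow(ii)$, Artin's theorem to regroup $(i+J)\bigl((i-J)v\bigr)$ for $(ii)\Rightarrow(iii)$, and the expansion $(i+J)P=(Jw-v)+i(w+Jv)$ for $(iii)\Rightarrow(i)$. Your extra remark is also valid: that expansion together with the left alternative law $J(Jv)=(JJ)v$ already gives $(i)\Rightarrow(iii)$, so Artin's theorem is not actually needed for this lemma.
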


\begin{proof}
    $(i)\implies (ii)$:
  \begin{align*}
    &p=0\iff w+Jv=0\iff w=-Jv\\
    \implies & P=w+vi=-Jv+vi=(i-J)v
  \end{align*}

  $(ii)\implies(iii)$: Observe that
  $(i+J)(i-J)=i^2-J^2=0$.
  By the theorem of Artin (Theorem \ref{artin-thm})
  the subalgebra generated by $J$ and $v$
  is associative. Hence
  \[
  (i+J)P=(i+J)\left((i-J)v\right)
  =(\underbrace{(i+J)(i-J)}_{=0})v=0
  \]

  $(iii)\implies(i)$:
  We have
  \[
  (i+J)P=0\iff (i+J)(w+vi)=0\iff
  \underbrace{(Jw-v)}_{\in A} + i\underbrace{(w+Jv)}_{\in A}=0
  \]
  Observe that for $v,w\in A$ we have
  \[
  v+iw=0 \iff (v,w)=(0,0).
  \]
  Hence $(iii)$ implies $w+Jv=0$ which is equivalent to $(i)$ since
  $p=w+Jv$.
  \end{proof}

\begin{proposition}\label{5.2}
  Let $G$ be a symmetric domain in $\C$, $A$ a
  finite-dimensional real alternative $*$-algebra.
  Let $f$ be a slice regular function on the axially symmetric domain
  $\Omega_G$ associated to $G$ and let  $F:G\to\Ac$ be the
  associated {\em stem function}. Let $x,y\in\R$ and $J\in \S_A$.

  Then
\[
f(x+yJ)=0 \ \iff\ (1-iJ)F(x+yi)=0
\]
\end{proposition}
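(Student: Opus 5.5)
The natural route is to reduce the statement to Lemma~\ref{7.1}. Write the stem function as $F=F'+iF''$ with $F',F'':G\to A$, so that $f(x+yJ)=F'(x+yi)+JF''(x+yi)$. This is the usual description recalled in \S\ref{corr}; one checks that it agrees with the matrix description there by expanding $\frac{1-Ji}{2}F(x+yi)+\frac{1+Ji}{2}F(x-yi)$ and using $F(\bar z)=\overline{F(z)}$, i.e.\ $F'$ even and $F''$ odd in $y$. Apply Lemma~\ref{7.1} with $w=F'(x+yi)$, $v=F''(x+yi)$ and $J\in\S_A$, which satisfies $J^2=-1$. Then $p=f(x+yJ)$ and $P=F(x+yi)$, and the lemma gives
\[
f(x+yJ)=0\iff (i+J)F(x+yi)=0 .
\]

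It remains to pass from $(i+J)$ to $(1-iJ)$. In $\Ac=A\tensor_\R\C$ the scalar $i$ is central and commutes with every element of $A$, so $(1-iJ)=-i(i+J)$, since $-i\cdot i=1$ and $-i\cdot J=-iJ$. Multiplication by the scalar $-i$ is an invertible $\C$-linear map on $\Ac$ and commutes with left multiplication. Hence
\[
(1-iJ)P=-i\bigl((i+J)P\bigr),
\]
which vanishes iff $(i+J)P=0$. This identity involves only the scalar $i$, so it holds without any associativity of $A$.

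The main point to be careful about is the first step: confirming that the stem-function convention of \S\ref{corr} really gives $f(x+yJ)=F'+JF''$, including the sign of $y$ and the case $y<0$. The direct check is to compute $e_1F(x+yi)+e_2F(x-yi)$ with $F(x-yi)=F'-iF''$. This yields
\[
\tfrac12(1-iJ)(F'+iF'')+\tfrac12(1+iJ)(F'-iF'')=F'+JF'',
\]
using $i^2=-1$. The identity holds for all real $y$, so the case $y<0$ is covered automatically, and $y=0$ is trivial.
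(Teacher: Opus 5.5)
Your proof is correct, but it is organized differently from the paper's. You pass to the coordinate form $f(x+yJ)=F'+JF''$; your check that this agrees with the idempotent description of Section~\ref{corr}, including $y<0$, is fine. You then invoke Lemma~\ref{7.1} (i)$\iff$(iii) to get $f(x+yJ)=0\iff(i+J)F(x+yi)=0$, and absorb the central unit via $1-iJ=-i(i+J)$. All the non-associativity is thus encapsulated in Lemma~\ref{7.1}.

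The paper does not use Lemma~\ref{7.1} here. It works directly with $e_1=\frac12(1-iJ)$, $e_2=\frac12(1+iJ)$ and the formula $f(x+yJ)=e_1F(x+yi)+e_2F(x-yi)$. For ``$\Rightarrow$'' it multiplies by $e_1$. It uses $e_1^2=e_1$ (alternativity), and Artin's theorem on the subalgebra generated by $J$ and $F(x-yi)$ to get $e_1(e_2F(x-yi))=0$. Hence $e_1F(x+yi)=e_1f(x+yJ)=0$. For ``$\Leftarrow$'' it writes $F=F'+iF''$ and reads off $F''=JF'$ from $(1-iJ)F(x+yi)=0$. It then concludes $f(x+yJ)=e_2\bigl((1-iJ)F'\bigr)=2(e_2e_1)F'=0$, again via Artin.

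Your route is shorter and shows the proposition is just Lemma~\ref{7.1} rescaled by the unit $-i$. The paper uses the lemma in exactly your way, but only afterwards, for Corollary~\ref{cor-spher}. The paper's route stays within the idempotent formalism, and its forward computation yields the general identity $e_1f(x+yJ)=e_1F(x+yi)$. That identity explains why $1-iJ$ is the natural factor to appear.
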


\begin{proof}
  First we observe that, thanks to Artins theorem
  (Theorem \ref{artin-thm}),
  the subalgebra of $\Ac$ generated by $J$ is associative.
  Thus we may use associativity for any calculation within
  the sub $\C$-algebra of $\Ac$ generated by $J$.
  
  Let 
  \[
  e_1=\frac 12\left(1-iJ\right),\
e_2=\frac 12\left(1+iJ\right).
\]
Then (see \paragraph\ref{corr}):
\[
f(x+yJ)=e_1 F(x+yi)+e_2 F(x-yi)
\]

Thus the assumption $f(x+yJ)=0$ implies
  \begin{align*}
  e_1f(x+yJ)=0\\
  \iff  &e_1\left(e_1F(x+yi)\right)
  +e_1\left(e_2( F(x-yi)\right)=0\\
\end{align*}
We have
\[
e_1\left(e_1F(x+yi)\right)
=\underbrace{(e_1^2)}_{e_1}F(x+yi)=e_1F(x+yi)
\]
because $A$ is alternative.

Furthermore $e_1,e_2, F(x-yi)$ are contained in the sub $\C$-algebra
of $\Ac$ generated by $J$ and $F(x-yi)$. This subalgebra is
associative due to Artins theorem   (Theorem \ref{artin-thm}).

Hence
\[
e_1\left(e_2( F(x-yi)\right)
=\underbrace{(e_1e_2)}_0 F(x-yi)=0
\]

Therefore
\[
f(x+yJ)=0\ \implies\
e_1 F(x+yi)=0\ \iff\
(1-iJ)F(x+yi)=0
\]

Next we consider the opposite direction and assume that
$(1-iJ)F(x+yi)=0$.
We decompose $F(x+yi)$ as
\[
F(x+yi)=
F'+iF'',\quad F',F''\in A
\]
and deduce
\begin{align*}
  &0=(1-iJ)F(x+yi)
  =(1-iJ)(F'+iF'')\\
  &
  =\underbrace{\left(F'+JF''\right)}_{\in A}
  +\underbrace{\left (iF''-iJF'\right)}_{\in iA}\\
  \implies &
  F'+JF''=0 \ \implies F''=JF'
\end{align*}

Then
(using $e_2e_1=0$, see \paragraph\ref{corr})
\[
f(x+yI)=e_2(F'-iF'')=
e_2(F'-i(JF'))=
e_2\underbrace{(1-iJ)}_{2e_1}F'=0
\]
\end{proof}


\begin{corollary}\label{z-c1}
  If $f(x+yI)=0$, then $F(x+yi)$ is a zero divisor.
\end{corollary}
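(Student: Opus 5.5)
By Proposition~\ref{5.2}, the hypothesis $f(x+yI)=0$ is equivalent to
\[
(1-iI)F(x+yi)=0 .
\]
The plan is to read this identity as saying that the nonzero element $1-iI$ annihilates $F(x+yi)$ from the left in $\Ac$. Then $F(x+yi)$ is a right zero divisor, and Remark~\ref{hufeisen} lets us simply call it a zero divisor. The case $F(x+yi)=0$ is trivially non-invertible, so we may assume $F(x+yi)\ne 0$.

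The one thing to verify is that $1-iI\neq 0$ in $\Ac=A\otimes_\R\C$. Write $1-iI=1\otimes 1-I\otimes i$. Its real part $1$ and imaginary part $-I$ are both elements of $A$. Since $v+iw=0$ forces $v=w=0$ (as observed in the proof of Lemma~\ref{7.1}), and $1\neq 0$, the element is nonzero.

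Hence $y=1-iI$ is a nonzero element with $y\,F(x+yi)=0$. By Proposition~\ref{alt-eq}, $(vi)\iff(i)$ (applied with $k=\C$ to the finite-dimensional alternative algebra $\Ac$), $F(x+yi)$ is not invertible. Equivalently, it is a zero divisor, both left and right.

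I expect no real obstacle. The only subtle point is that $\Ac$ is alternative and finite-dimensional, so that Proposition~\ref{alt-eq} applies. Finite-dimensionality is clear. Alternativity is preserved by complexification, because the alternative identities are multilinearizable and extend $\C$-linearly. Alternatively, one can bypass Proposition~\ref{alt-eq} and argue directly: the element $1-iI$ witnesses that $F(x+yi)$ is a right zero divisor in the sense of Proposition~\ref{alt-eq}~(vi).
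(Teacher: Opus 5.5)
Your argument is correct and is the intended one. The paper states this corollary without a separate proof, as an immediate consequence of Proposition~\ref{5.2}: the relation $(1-iI)F(x+yi)=0$ with $1-iI\neq 0$ (indeed $\frac12(1-iI)$ is the nonzero idempotent $e_1$ from that proof) exhibits $F(x+yi)$ as a zero divisor. Two minor points: you do not need to treat $F(x+yi)=0$ separately, since $0$ is already a zero divisor under the definition in Proposition~\ref{alt-eq}; and you should rename your annihilator $y$, which clashes with the real coordinate $y$.
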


\begin{corollary}\label{cor-spher}
  A slice regular function vanishes on $x+y\S_A$ if and only if
  $F(x+yi)=0$.
\end{corollary}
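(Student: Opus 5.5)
The ``if'' direction is immediate from Fact~\ref{sphere-constant}. If $F(x+yi)=0$, then $F(x+yi)\in A\subset\Ac$, so $f(x+yI)=F(x+yi)=0$ for every $I\in\S_A$. Alternatively, one can read it off Proposition~\ref{5.2}: $(1-iI)\cdot 0=0$ for all $I$.

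For the ``only if'' direction, assume $f(x+yI)=0$ for all $I\in\S_A$. The plan is to apply Proposition~\ref{5.2} at two antipodal points of the pseudosphere. Fix some $J\in\S_A$; note $\S_A$ is nonempty, since it contains the image of $i$ under any real $*$-homomorphism $\C\to A$, and $x+y\S_A$ is assumed to lie in the domain. Since $-J\in\S_A$ as well, Proposition~\ref{5.2} yields both
\[
(1-iJ)F(x+yi)=0 \quad\text{and}\quad (1+iJ)F(x+yi)=0 .
\]
Adding the two identities gives $2F(x+yi)=0$, hence $F(x+yi)=0$.

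The main point to check is that $-J\in\S_A$. This holds because $\Tr(-J)=-\Tr(J)=0$ and $\Nm(-J)=(-J)(-J)^c=JJ^c=1$. Addition in $\Ac$ is unproblematic, so no associativity issues arise. The degenerate case $y=0$ is harmless: then $x+y\S_A=\{x\}$, and the same argument still works, because $f(x)=F(x)$ when $F(x)\in A$. In fact Proposition~\ref{5.2} applies verbatim with $y=0$.
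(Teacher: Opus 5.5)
Your argument is correct and is essentially the paper's proof. The paper also evaluates at the antipodal points $x+yI$ and $x-yI$, obtains $(i+I)F(x+yi)=0$ and $(i-I)F(x+yi)=0$ from Lemma~\ref{7.1}, and adds them. This is your computation up to the central unit $i$, since $i\pm I=i(1\mp iI)$.

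The remaining differences are cosmetic: you cite Proposition~\ref{5.2} and the constancy of $f$ on spheres where $F\in A$, instead of Lemma~\ref{7.1}. Your justification that $\S_A\neq\emptyset$ is vacuous as written, since it presupposes a $*$-homomorphism $\C\to A$ exists. This is harmless, because nonemptiness of $\S_A$ is an implicit standing assumption in the paper as well.
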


\begin{proof}
  Apply the Lemma~\ref{7.1} with $f(x+yI)=p$ and $P=F(x+yi)$.

  If $F(x+yi)=0$, $(iii)\implies(i)$ yields $f(x+yI)=0\ \forall I$.

  For the opposite direction, note that
  \begin{align*}
  &f(x+yI)=0\ \implies (i+I)F(x+yi)=0,\\
  &f(x-yI)=0\ \implies (i-I)F(x+yi)=0
  \end{align*}
  Summing up the two equations
    on the right, we obtain $2iF(x+yi)=0$ which
    implies $F(x+yi)=0$.
\end{proof}

\begin{proposition}\label{zero-div-inv}
  Assume that $A$ is a division algebra
  (i.e., $A$ is isomorphic to $\C$, $\H$ or $\Oct$).

  Then for any given $x,y\in\R$ there exists an $f$-zero on $x+y\S_A$
  if and only if $F(x+yi)$ is a zero divisor.
\end{proposition}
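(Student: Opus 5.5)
One direction is already available: if $f(x+yJ)=0$ for some $J\in\S_A$, then $F(x+yi)$ is a zero divisor by Corollary~\ref{z-c1}. The real work is the converse. So assume $P=F(x+yi)\in\Ac$ is a zero divisor. We must produce $J\in\S_A$ with $(1-iJ)P=0$; then Proposition~\ref{5.2} gives $f(x+yJ)=0$.

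Write $P=F'+iF''$ with $F',F''\in A$. Expanding $(1-iJ)(F'+iF'')$ as in the proof of Proposition~\ref{5.2}, the condition $(1-iJ)P=0$ is equivalent to the two real equations
\[
F'+JF''=0,\qquad F''-JF'=0 .
\]
Since $J^2=-1$ and the subalgebra generated by $J$ and $F'$ is associative (Theorem~\ref{artin-thm}), the second equation follows from the first. So the task reduces to the following: find $J\in\S_A$ with $JF''=-F'$.

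Now we use that $A$ is a division algebra with multiplicative norm $|\cdot|$.

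\emph{Case $F''=0$.} If also $F'=0$, then $P=0$ and $f$ vanishes on all of $x+y\S_A$ by Corollary~\ref{cor-spher}. If $F'\neq0$, then $P=F'\in A^*$, and by Corollary~\ref{unit-subar} $P$ is invertible in $\Ac$, contradicting the hypothesis.

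\emph{Case $F''\neq0$.} The natural candidate is $J=-F'(F'')^{-1}$. We need $J^2=-1$, which is exactly $J\in\S_A$ because $Q_A=A$. Since $F'$ and $F''$ generate an associative subalgebra, this amounts to checking that $q:=F'(F'')^{-1}$ satisfies $q^2=-1$, i.e.\ $\operatorname{Re}q=0$ and $|q|=1$.

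This is the main obstacle: translating ``$P$ is a zero divisor'' into these two conditions on $q$. The plan is to factor
\[
P=(q+i)F'' .
\]
This uses Artin's theorem: the subalgebra of $\Ac$ generated by $q$ and $F''$ is associative, and $i$ is central. Because $F''$ is invertible, Corollary~\ref{inv-prod} shows that $P$ is a zero divisor if and only if $q+i$ is non-invertible in $\Ac$.

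Next, $q$ lies in a subalgebra $\C_I=\R+\R I\cong\C$ of $A$. Hence $q+i$ lies in $\C_I\otimes_\R\C\cong\C\oplus\C$, which is associative and commutative. By Corollary~\ref{unit-suba}, $q+i$ is non-invertible in $\Ac$ exactly when it is non-invertible in $\C_I\otimes\C$. Under the isomorphism $I\mapsto(\mathrm{i},-\mathrm{i})$, the element $q=a+bI$ goes to the pair $(a+b\mathrm{i},\,a-b\mathrm{i})$, so $q+i$ goes to $(a+b\mathrm{i}+\mathrm{i},\,a-b\mathrm{i}+\mathrm{i})$. This is non-invertible iff one of the two components vanishes, i.e.\ iff $a=0$ and $b=\pm1$. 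In either case $q=\pm I$, so $q^2=-1$ and $J=-q\in\S_A$, as required.

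If $q$ happens to be real, take any $I\in\S_A$ and the same computation shows that $q+i$ is invertible, contradicting the hypothesis. Combining the cases gives the equivalence.
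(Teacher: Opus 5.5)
Your proof is correct, but it does not follow the paper. The paper gives no argument of its own: it remarks that the claim can be deduced from the preceding considerations, and otherwise cites Theorem~17 of \cite{GP}, using that the normal cone $N_A$ is all of $A$ for $\C$, $\H$, $\Oct$.

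You supply the deduction from Proposition~\ref{5.2} that the paper leaves implicit, in three steps. First, you reduce $(1-iJ)P=0$ to $JF''=-F'$. Second, you factor $P=(q+i)F''$ with $q=F'(F'')^{-1}$. Third, you use Corollaries~\ref{inv-prod} and~\ref{unit-suba} to move the invertibility question into the commutative algebra $\C_I\otimes_\R\C\cong\C\oplus\C$, where it is a one-line computation.

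The citation is shorter and places the result in a theory that also reaches beyond division algebras (normal cone, admissible functions). Your argument has different advantages:
\begin{enumerate}
\item It is self-contained and uses only results proved in the paper.
\item It produces the zero explicitly as $J=-F'(F'')^{-1}$, and shows this is the unique zero on $x+y\S_A$ when $F(x+yi)\notin A$.
\item It shows exactly where the division hypothesis enters: invertibility of nonzero $F'$ and $F''$, and $Q_A=A$.
\end{enumerate}
Your conditions $\operatorname{Re}q=0$, $|q|=1$ are equivalent to the vanishing of the complex scalar $P^cP=|F'|^2-|F''|^2+2i\langle F',F''\rangle$, where $P^c=F'^c+iF''^c$. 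That is the kind of quantity a normal-cone approach works with, so your approach and the cited one are closely related.

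Two harmless slips. The associativity behind $J(JF'')=J^2F''$ is that of the subalgebra generated by $J$ and $F''$, not $J$ and $F'$; left alternativity alone also suffices. The identity $qF''=F'$ uses the subalgebra generated by $F'$ and $F''$, which contains $(F'')^{-1}$.
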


\begin{proof}
  This may be deduced from the preceding considerations.

  However, it is also a special case of Theorem~17 of
  \cite{GP}
  because if $A$ is isomorphic to $\C$, $\H$ or $\Oct$
  the ``normal cone'' $N_A$ of $A$ agrees with all of $A$.
\end{proof}

If $A$ is not a division algebra, it is still true that $f$ has no zeroes
on $x+y\S_A$ if $F(x+yi)$ is invertible
(Corollary~\ref{z-c1}).

But $F(x+yi)$ being a zero divisor does no longer imply the existence of
zeroes: For example, simply take a zero divisor $c\ne 0$ in $A$.
The constant function with value $c$ provides an example where the
stem function has non-invertible values, but the slice regular function
has no zeroes.

\subsection{Star products with slice preserving functions}

\begin{definition}\label{def-sp}
  A slice regular function $f$
  on an axially symmetric domain $\Omega$ is called {\em slice preserving}
  iff its stem function is $\C$-valued.
\end{definition}

\begin{remark}
  
  \begin{enumerate}
  \item
    These functions are called
    {``\em real''} in
    \cite{GP} (Definition~10).
  \item
    The condition is equivalent to the assumption
    that $f(\Omega\cap\C_I)\subset\C_I\ \forall I\in \S_A$.
    (\cite{GP}, Proposition~10).
  \end{enumerate}
\end{remark}

\begin{lemma}\label{lem-f.g}
  Let $f,h$ be slice regular functions
  on an axially symmetric domain $\Omega_G$
  induced by a symmetric domain $G\subset \C$.
  Assume that
  $f$ is {\em slice preserving} (Definition~\ref{def-sp}).
  Then
  \[
  (f*h)(q)=f(q)\cdot h(q)\ \forall q\in\Omega_G
  \]
\end{lemma}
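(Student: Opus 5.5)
Throughout, I take the star product $f*h$ to be defined, as in \cite{GP}, as the slice function whose stem function is the pointwise product $FH$ in $\Ac$, where $F,H:G\to\Ac$ are the stem functions of $f,h$. Since $f$ is slice preserving, $F$ takes values in $\C=\R\tensor\C\subset\Ac$. Write $F=\alpha+i\beta$ with $\alpha,\beta:G\to\R$ and $H=H'+iH''$ with $H',H'':G\to A$. The plan is to evaluate the stem function $FH$ on $x+yI$ via the formula from \paragraph\ref{corr}, and compare the result with the product of the values of $f$ and $h$.

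First step: expand the stem function of $f*h$. Since $\alpha,\beta$ are real scalars commuting with everything,
\[
FH=(\alpha H'-\beta H'')+i(\alpha H''+\beta H').
\]
Hence for $q=x+yI$, evaluating stem functions at $z=x+yi$, the slice function associated to $FH$ is
\[
(f*h)(q)=\alpha H'-\beta H''+I(\alpha H''+\beta H').
\]

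Second step: compute the pointwise product. We have $f(q)=\alpha+I\beta$ and $h(q)=H'+IH''$, so
\[
f(q)h(q)=(\alpha+\beta I)(H'+IH'')=\alpha H'+\alpha IH''+\beta IH'+\beta I(IH'').
\]
The only delicate term is $I(IH'')$. Alternativity gives $I(IH'')=(II)H''=-H''$, using $I^2=-1$ for $I\in\S_A$. Equivalently, by Artin's theorem (Theorem~\ref{artin-thm}), $I$ and $H''$ generate an associative subalgebra. Real scalars pass freely through products, so the two expressions coincide.

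The main (minor) obstacle is ensuring consistency of conventions: that $f*h$ is indeed defined through the product of stem functions, and that the evaluation formula $f(x+yI)=F'+IF''$ agrees with the matrix description in \paragraph\ref{corr}. Once this is granted, the only non-associative step is $I(IH'')=-H''$, which the left alternative law handles directly.
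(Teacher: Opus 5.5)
Your proposal is correct and follows essentially the same route as the paper: both expand $(f*h)(q)$ from the stem function $F\cdot H$ and expand the pointwise product $f(q)h(q)$ with real coefficients $F_1,F_2$, and both reduce the comparison to moving real scalars through products plus the single nontrivial identity $I(IH_2)=(I^2)H_2=-H_2$ given by the alternative law. The convention you flag, that $f*h$ is the slice function whose stem function is $F\cdot H$, is exactly the one the paper uses.
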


\begin{proof}
  Let $F=F_1+iF_2$ and $H=H_1+iH_2$ be the stem functions for $f$ respective
  $h$ (with $F_1,F_2,H_1,H_2:G\to A$).
  Since $f$ is assumed to be slice-preserving, $F_j$ is $\R$-valued
  ($j\in\{1,2\}$).
  
  Let
  $q=x+yI$ and $z=x+yi$.
  Then
  \begin{align*}
  f(q)h(q)&=
  \left(F_1(z)+IF_2(z)\right)
  \left(H_1(z)+IH_2(z)\right)\\
  &=
  F_1(z)H_1(z)+F_1(z)\left(IH_2(z)\right)
  +\left(IF_2(z)\right)H_1(z)\\
  &+\left(IF_2(z)\right)\left(IH_2(z)\right)\\
  \end{align*}
  On the other hand, the stem function of $f*g$ is $F\cdot G$ and
  therefore
  \[
  (f*h)(q)=F_1(z)H_1(z)-F_2(z)H_2(z)
  +I\left(F_1(z)H_2(z)+F_2(z)H_1(z)\right)
  \]
  Observe that $(IF_2(z))H_1(z)=I(F_2(z)H_1(z))$ because $F_2(z)\in\R$.
  Furthermore $F_1(z)(IH_2(z))=I(F_1(z)H_2(z))$ thanks to $F_1(z)\in\R$.
  Moreover
  $(IF_2(z))(IH_2(z))=F_2(z)(I(IH_2(z))$ due to $F_2(z)\in\R$.
  Finally $I(I(H_2(z)))=(I^2)H_2(z)$, because $A$ is an alternative
  algebra. Therefore $(IF_2(z))(IH_2(z))=-F_2(z)H_2(z)$.

  This completes the proof.
\end{proof}

\begin{proposition}\label{star-zero}
  Let $f,g:\Omega\to A$ be slice regular functions and
  assume that $f$ is {\em slice preserving}
  (as defined above (Definition~\ref{def-sp})).

  Then
  \[
  \forall q:
  (f*g)(q)=0\ \iff\ (f(q)=0\text{ or }g(q)=0)
  \]
\end{proposition}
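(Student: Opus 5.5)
By Lemma~\ref{lem-f.g} we have $(f*g)(q)=f(q)\cdot g(q)$ for every $q\in\Omega$. The statement therefore reduces to a pointwise algebraic claim. For fixed $q=x+yI\in\Omega$, set $a=f(q)$ and $b=g(q)$. Then we must show $ab=0$ if and only if $a=0$ or $b=0$. The implication ``$\Leftarrow$'' is trivial, so the work is in ``$\Rightarrow$''.

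The key point is that $a$ lies in a very special subset of $A$. Since $f$ is slice preserving, its stem function $F=F_1+iF_2$ has real-valued components $F_1,F_2$. Hence
\[
a=f(x+yI)=F_1(x+yi)+IF_2(x+yi)=\alpha+\beta I,\qquad \alpha,\beta\in\R,
\]
so $a$ lies in $\C_I\subset Q_A$. As recalled in the preliminaries (\cite{GP}, Proposition~1), every non-zero element of $Q_A$ is invertible. So either $a=0$, and we are done, or $a\in A^*$.

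In the case $a\in A^*$ with $ab=0$, we want to conclude $b=0$. Let $z=a^{-1}$, so $za=1$ by Corollary~\ref{inv-unic}. Corollary~\ref{inv-ass} then gives
\[
b=z(ab)=z\cdot 0=0.
\]
Alternatively, Proposition~\ref{alt-eq}\,(i)$\Rightarrow$(v) says an invertible element is not a left zero divisor. The inverse can also be written down explicitly as $a^{-1}=(\alpha-\beta I)/(\alpha^2+\beta^2)$, because $I^2=-1$ and $\R[I]$ is an associative subalgebra isomorphic to $\C$.

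The only step that needs care is the claim that a slice preserving function takes values in $\C_I$ at points of $\C_I$. This follows directly from the stem-function description, since $F_1,F_2$ are real, and it is also recorded in the remark after Definition~\ref{def-sp}. Everything else is routine, so I expect no real obstacle; the non-associativity is absorbed by Corollary~\ref{inv-ass}.
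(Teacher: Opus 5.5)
Your proof is correct, but for the nontrivial direction it takes a different route from the paper.

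The paper uses Lemma~\ref{lem-f.g} only for ``$\Leftarrow$''. For ``$\Rightarrow$'' it passes to stem functions. By Proposition~\ref{5.2}, $(f*g)(q)=0$ means $(1-iJ)\bigl(F(z)G(z)\bigr)=0$. Since $F(z)\in\C$ acts on $\Ac$ as a complex scalar, this splits into two cases: $F(z)=0$, which is a spherical zero of $f$, or $(1-iJ)G(z)=0$, which means $g(q)=0$ by Proposition~\ref{5.2} again.

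You instead stay in $A$ at the point $q$. The value $f(q)=\alpha+\beta I$ lies in $\C_I$, so it is either $0$ or invertible. An invertible left factor can then be cancelled via Corollary~\ref{inv-ass}, that is, via Artin's theorem.

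Your argument is more elementary. It shows the conclusion depends only on $f(q)$ being zero or invertible, with the product formula doing all the work. The paper's argument avoids any cancellation in a non-associative algebra, since dividing by a complex scalar is just $\C$-linearity. It also stays within the stem-function framework of Proposition~\ref{5.2} that drives the rest of the paper. As a by-product it records that any zero of $f$ at $q$ is automatically a spherical zero.
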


\begin{proof}
    Using Lemma~\ref{lem-f.g} we obtain
  \[
  \forall q: (f*g)(q)=f(q)\cdot g(q).
  \]

  Therefore the direction
  \[
  (f*g)(q)=0\ \Leftarrow\ (f(q)=0\text{ or }g(q)=0)
  \]
  is obvious and it suffices to prove ``$\Rightarrow$''.
  
  Assume $(f*g)(q)=0$ with $q=x+yJ$
  ($x,y\in\R, J\in\S_A$) and $z=x+yi$.
  Let $F$ and $G$ be the stem functions of $f$ and $g$.
  Then $\forall z:F(z)\in\C$, because $f$ is slice-preserving.
Using Proposition~\ref{5.2} the assumption $(f*g)(q)=0$ implies
  \[
  (1-iJ)\left( F(z)\cdot G(z)  \right)=0
  \]
  Since $F(z)\in\C$  which is central in $\Ac$,
  we may conclude
  \[
  F(z)=0\text{ or } (1-iJ)G(z)=0
  \]
  Now $F(z)=0\implies f(q)=0$.
  On the other hand,
  $(1-iJ)G(z)=0\implies g(q)=0$ due to
  Proposition~\ref{5.2}.
  Thus in any case we may conclude that $f(q)=0$ or $g(q)=0$.
  \end{proof}
\section{Runge}

\begin{definition}
  A subset $K$ in a domain (or complex manifold) $X$ is called
  {\em Runge (in $X$)} if for  every holomorphic function
  $f$ defined in some open neighborhood of $K$ in $X$ there exists a
  sequence of holomorphic functions $f_n$ on $X$
  which converge locally uniformly to $f$ on $K$.
\end{definition}

\begin{remark}
  \begin{enumerate}
  \item
    This condition is equivalent to the following statement:
    {\em
      Let $\Olo(K)$ denote the vector space of continuous functions on $K$
  which can be extended to a holomorphic function in a neighborhood
  of $K$ in $X$.

  Then $\Olo(X)$ is dense in $\Olo(K)$ (for the topology
  of locally uniform convergence).
}
  \item
    If $K$ is compact this is moreover equivalent to:
  {\em
    For every holomorphic function $f$ defined on an open neighborhood
    of $K$ and every $\varepsilon>0$ there exists a holomorphic function
    $g$ on $X$ with $|g(z)-f(z)|<\varepsilon\ \forall z\in K$.
  }
  \end{enumerate}
\end{remark}

\subsection{Runge implies Runge with interpolation}

\begin{proposition}\label{runge-interp}
  Let $X\subset\C$ be a domain,
  $S\subset X$ a finite subset, $K\subset X$ a compact subset.

  Assume that $K$ is Runge in $X$.

  Then for every $f\in\Olo(K\cup S)$ and every $\varepsilon>0$
  there is a function $\tilde f\in \Olo(X)$ with
  $||f-\tilde f||_K<\varepsilon$ and $||f-\tilde f||_S=0$.
\end{proposition}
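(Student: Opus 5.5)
We prove Proposition~\ref{runge-interp} by first approximating without interpolation and then correcting by a polynomial that vanishes on $K$-independent data. Write $S=\{s_1,\ldots,s_m\}$. We may assume $S\cap K=\{\}$: for points of $S$ lying in $K$, the function $f$ is given on a neighborhood of $K$, and we handle them exactly like the other points. Concretely, we treat all of $S$ as interpolation nodes, regardless of whether they lie in $K$.

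First we observe that $K\cup S$ is again Runge in $X$. By Runge's theorem in the plane, a compact set is Runge in $X$ if and only if $X\setminus K$ has no relatively compact connected component in $X$. Adding finitely many points does not create such components, since removing finitely many points from an open connected set keeps it connected. So every bounded-in-$X$ component of $X\setminus(K\cup S)$ would be a component of $X\setminus K$ minus finitely many points, and there are none. Alternatively, and without quoting Runge's theorem, we can argue directly. A function holomorphic near $K\cup S$ is given by $f$ near $K$ together with germs near each $s_j\notin K$. Only the values at $s_j$ matter for us, so we may replace $f$ near $s_j$ by a constant. Hence it suffices to approximate functions holomorphic near $K$ and to fix the values on $S$ afterwards.

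Next we build Lagrange-type correctors. Since $S$ is finite, choose polynomials $\ell_1,\ldots,\ell_m$ with $\ell_j(s_k)=\delta_{jk}$; these exist by Lagrange interpolation. They are holomorphic on $X$ and bounded on $K$, say by a constant $C$. Then, given $\eta>0$, use the Runge property of $K\cup S$ (or of $K$, approximating on $K$ and on small discs around the $s_j\notin K$ separately) to obtain $g\in\Olo(X)$ with $\|f-g\|_{K\cup S}<\eta$. Define
\[
\tilde f=g+\sum_{j=1}^m \bigl(f(s_j)-g(s_j)\bigr)\ell_j .
\]
Then $\tilde f(s_k)=f(s_k)$ for all $k$, and $\|f-\tilde f\|_K\le \eta+m\,C\,\eta$. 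Choosing $\eta<\varepsilon/(1+mC)$ finishes the proof.

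The only step needing care is the approximation on $K\cup S$ simultaneously, that is, knowing that $K\cup S$ is Runge. This follows from the topological characterization above, since $X\setminus(K\cup S)$ differs from $X\setminus K$ only by finitely many punctures. If one prefers to use only the stated hypothesis that $K$ is Runge, a different corrector is needed, because approximating on $K$ alone gives no control of $g(s_j)$ for $s_j\notin K$. This is harmless, since the correction term absorbs any discrepancy at the points of $S$. The estimate on $K$ involves only $\|f-g\|_K$ and the bounded coefficients $f(s_j)-g(s_j)$, and those coefficients may be large. To fix this, we make the correctors small on $K$. Pick $\ell_j$ holomorphic on $X$ with $\ell_j(s_k)=\delta_{jk}$ and $\|\ell_j\|_K$ as small as desired for $s_j\notin K$. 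We obtain these by Runge-approximating on $K\cup\{s_j\}\cup\ldots$ the function that is $0$ near $K$ and $\delta_{jk}$ near $s_k$, then applying a finite-dimensional invertibility argument to fix the exact values. This again reduces to $K\cup S$ being Runge, which is the main obstacle and is settled by the connectedness remark.
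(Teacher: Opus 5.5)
Your proof is correct and is essentially the paper's direct argument. You use the topological characterization to see that $K\cup S$ is Runge, approximate $f$ on $K\cup S$ by $g\in\Olo(X)$, and then correct the values on $S$ by a fixed linear section of the evaluation map $\Olo(X)\to\C^S$; you realize that section explicitly by Lagrange polynomials (giving the constant $mC$), where the paper just picks an abstract section $\sigma$ with operator norm $C$ on $K$. The side paragraphs about avoiding Runge's theorem and building correctors small on $K$ are unnecessary, since the connectedness remark already settles that $K\cup S$ is Runge.
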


This is the consequence of a general fact:

\begin{proof}
  The assertion is a special case of Corollary 5.4.5 in
  \cite{F2}:
  $K$ is Runge iff it is $\Olo(X)$-convex.
  A given function $f\in\Olo(K\cup S)$ evidently extends to a
  continuous function on $X$ to which we may apply
  Corollary 5.4.5 of \cite{F2} with $X'=S$ and $f_1=\tilde f$.
\end{proof}

For the convenience of the reader we also provide a more
  direct proof.

\begin{proof}
Let $\C^S$ denote the vector space of $\C$-valued functions on the
finite set $S$. We consider 
the natural evaluation map $\eta:\Olo(X)\to\C^S$.
We choose a section, i.e.,
a linear map $\sigma:\C^S\to\Olo(X)$ with $\eta\circ\sigma=id$.

Since $\C^S$ is finite-dimensional, this
linear map $\sigma$ is continuous.
Let $C$ denote the operator norm of the linear map
  $\sigma$ with respect to the sup-norm on $K$, i.e.,
\[
C=\sup\{||\sigma(v)||_K:v\in\C^S, ||v||_S=1\}
\]

Let $f\in\Olo(K)$ and $\varepsilon>0$ be given.
 We have to show the existence of a holomorphic function
  $\tilde f\in\Olo(X)$ with $||f-\tilde f||_K<\varepsilon$
 and $||f-\tilde f||_S=0$.
 Recall that
   $\Olo(X)$ is dense in $\Olo(K)$ if and only if no connected
   component of $X\setminus K$ is relatively compact in $X$.
   Evidently for any finite set $S\subset X$, the union $K\cup S$
   satisfies this property if and only if $K$ satisfies this property.
   Hence in our situation  $\Olo(X)$ being dense in $\Olo(K)$ implies that
   $\Olo(X)$ is dense in $\Olo(K\cup S)$.
   
Thus we choose a function $g\in\Olo(X)$ with
\[
||f-g||_{K\cup S}<\frac{\varepsilon}{1+C}
\]
Consider $h=(f-g)|_S=\eta(f-g)$ and choose $b=\sigma(h)\in\Olo(X)$.
We observe that
\[
||b||_K =||\sigma(h)||_K
\le C||h||_S=C||f-g||_S
\le C ||f-g||_{K\cup S}< C\frac{\varepsilon}{1+C}.
\]
Define
$\tilde f=g+b$. For every $z\in S$ we have
\[
b(z)=h(z)=f(z)-g(z) \ \implies\ \tilde f(z)=g(z)+b(z)=f(z).
\]
On the other hand
\[
||\tilde f-f||_K=||g+b-f||_K\le ||f-g||_K+||b||_K\le
\frac{\varepsilon}{1+C} + C\frac{\varepsilon} {1+C}=\varepsilon.
\]
\end{proof}

\section{Exhaustions}

  \begin{theorem}\label{rr-exh}
    Let $X$ be a Stein manifold and let $q_n$ (with $n\in\N$) be a 
    discrete sequence without repetitions in $X$.

    Then there exists a strictly plurisubharmonic
    exhaustion function $\rho:X\to\R^+_0$ with
    $\rho(q_n)=n\ \forall n\in\N$.
  \end{theorem}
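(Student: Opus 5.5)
Start with any smooth strictly plurisubharmonic exhaustion function $\rho_0:X\to\R$; one exists because $X$ is Stein. Then modify it in two stages: first make it large near the points $q_n$ in a controlled way, and then correct it locally so that it takes exactly the value $n$ at $q_n$. Since $\{q_n\}$ is discrete, I may pass to a subsequence-free relabelling argument. Adding a constant to $\rho_0$, I arrange $\rho_0\ge 0$, and I fix pairwise disjoint small coordinate balls $B_n$ around $q_n$ forming a locally finite family.

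\textbf{Step 1 (pushing values below $n$).} Compose $\rho_0$ with a convex increasing function. Choose a smooth convex increasing $\chi:\R\to\R$ with $\chi'>0$, so that $\chi\circ\rho_0$ is still strictly psh and exhaustive. The natural requirement would be $\chi(\rho_0(q_n))\le n-1$, but this is not possible in general: if $\rho_0(q_n)$ is large while $n$ is small, no increasing $\chi$ can force $\chi(\rho_0(q_n))$ small. Hence I instead \emph{lower} $\rho$ near $q_n$ by a local bump.

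Concretely, let $\rho_1=\chi\circ\rho_0$ with $\chi$ growing so fast that $\rho_1(q_n)\ge n$ for all $n$, and so that on each sublevel region the Levi form of $\rho_1$ is large. This is possible because $\rho_0(q_n)\to\infty$ along the discrete sequence and only finitely many $q_n$ lie in each compact set.

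\textbf{Step 2 (local correction).} Near $q_n$, in local coordinates $z$ centred at $q_n$, set
\[
\rho=\max_{\varepsilon}\bigl(\rho_1,\;n+|z|^2-\delta_n\bigr)
\]
on $B_n$ and $\rho=\rho_1$ elsewhere. Here $\max_\varepsilon$ is a regularized maximum and $\delta_n>0$ is small. For this construction to take the value $n$ at $q_n$, the new function must dominate at $q_n$, which means $\rho_1(q_n)$ must be less than $n$. That is the opposite of Step 1.

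So the correct order is the following. First make $\rho_1$ \emph{small} at $q_n$ relative to $n$; this is possible because $n\mapsto q_n$ is a bijection onto a discrete set. Choose instead $\rho_1$ arbitrary and then reindex via a target: define $c_n=\rho_1(q_n)$, and replace the value $n$ with the requirement $\rho(q_n)=n$ by adding a psh bump $\max_\varepsilon(\rho_1, n+M_n|z|^2-\text{const})$ when $c_n<n$.

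When $c_n>n$, I must instead lower $\rho_1$ near $q_n$. To do this, write $\rho=\rho_1-\sum_n\phi_n$ with $\phi_n$ smooth, supported in $B_n$, and $\phi_n(q_n)=c_n-n$. Strict plurisubharmonicity survives only if the Hessian of $\phi_n$ is dominated by the Levi form of $\rho_1$ on $B_n$. I achieve this by first composing $\rho_0$ with a fast convex $\chi$, which makes the Levi form on $B_n$ as large as needed compared with the fixed $C^2$ size of a bump of height $c_n-n$. The catch is that $\chi$ also increases $c_n$. To handle this, I choose the balls $B_n$ shrinking and make the correction with $\chi'$ large, using that the Levi form of $\chi\circ\rho_0$ grows like $\chi'$ while $c_n$ grows like $\chi$. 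Picking $\chi$ with $\chi''/\chi'$ huge gives $\chi'(\rho_0)\,\partial\bar\partial\rho_0+\chi''\,\partial\rho_0\wedge\bar\partial\rho_0$, which dominates a bump whose height is about $\chi(\rho_0(q_n))$ on a ball of radius $r_n$, provided $\chi'(\rho_0(q_n))\,r_n^2\gg\chi(\rho_0(q_n))$. This holds if $\chi'$ is large near those levels.

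\textbf{Main obstacle.} The hard part is exactly this balancing of the bump heights against the Levi form. It must be done while also keeping exhaustiveness and nonnegativity. Exhaustiveness is preserved because the corrections are bounded by the controlled growth and are locally finite. The bookkeeping of choosing $\chi$ inductively on the levels of $\rho_0$, together with the radii $r_n$, is where care is needed.
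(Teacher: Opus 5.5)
The step your whole argument rests on is not proved, and the criterion you give for it does not work. That step is lowering $\rho_1=\chi\circ\rho_0$ at $q_n$ from $c_n=\chi(t_n)$, $t_n=\rho_0(q_n)$, down to $n$ by subtracting a bump $\phi_n$ supported in $B_n$, without destroying strict plurisubharmonicity.

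\textbf{Why your criterion fails.} The condition $\chi'(t_n)r_n^2\gg\chi(t_n)$ controls the Levi form of $\rho_1$ only at $q_n$. A round bump of height $h_n=c_n-n$ on $B_n$ has positive complex Hessian of size about $h_n/r_n^2$ near the whole rim of $B_n$. This includes the side where $\rho_0\approx t_n-\delta_n$, with $\delta_n\approx|d\rho_0(q_n)|\,r_n$. There the Levi form of $\rho_1$ is governed by $\chi'$ and $\chi''$ at $t_n-\delta_n$, and convexity only gives $\chi'(t_n-\delta_n)\le\chi(t_n)/\delta_n$.

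Suppose $\chi$ grows fast enough for your centre condition, say $\chi=e^{Mt}$ with $M\gg r_n^{-2}$. Then $\chi'$ and $\chi''$ at $t_n-\delta_n$ carry a factor $e^{-M\delta_n}$. They become negligible compared with $h_n/r_n^2\approx\chi(t_n)/r_n^2$ as soon as $|d\rho_0(q_n)|$ is not small compared with $r_n$ (in coordinates where $i\partial\bar\partial\rho_0$ is comparable to $i\partial\bar\partial|z|^2$). Here $h_n\approx\chi(t_n)$ because $\chi$ grows fast. So enlarging $\chi'$ or $\chi''/\chi'$ near the level $t_n$ does not help: it inflates the required drop at least as fast as the Levi form available on the lower half of $B_n$.

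\textbf{Two further unsupported claims.}
\begin{enumerate}
\item The variant $\max_\varepsilon(\rho_1,n+M_n|z|^2-\mathrm{const})$ for $c_n<n$ cannot be glued back to $\rho_1$ near $\partial B_n$. By the maximum principle, a psh modification raises the centre value by at most the oscillation of $\rho_1$ on $B_n$.
\item Exhaustiveness and $\rho\ge0$ do not follow from local finiteness. The drops $c_n-n$ are unbounded, and you never bound $\min_{\overline{B_n}}\rho$ from below.
\end{enumerate}

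\textbf{What is missing.} The lowering has to be done with a logarithmic dip rather than a Hessian-bounded bump. Suppose $i\partial\bar\partial\rho_1\ge\lambda\,i\partial\bar\partial|z|^2$ on $\{|z|<r\}$. Then $\rho_1+\theta(|z|^2)$ stays strictly psh whenever $\theta'\ge0$ and $(s\theta'(s))'\ge-\lambda/2$. Take for $s\theta'(s)$ a smoothing of $\max\bigl(0,\min(Cs,\lambda(r^2-s)/2)\bigr)$. This produces a drop $\theta(r^2)-\theta(0)\approx\frac{\lambda r^2}{2}\log(2C/\lambda)$, which is unbounded in $C$ and uses only the fixed lower bound $\lambda$. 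Add two more ingredients: choose $\chi$ so that $\chi\circ\rho_0\ge n+1$ on $\overline{B_n}$, and take a regularized maximum with $n-1+\epsilon|z|^2$ on $B_n$, so that $\rho\ge n-1$ there. Together these would give a complete proof along your lines.

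\textbf{How the paper does it.} The paper avoids local analysis altogether. It embeds $X$ properly into $\C^N\times\{0\}\subset\C^{N+1}$. It then uses Rosay--Rudin tameness of discrete subsets of a hyperplane to get automorphisms $\phi,\psi$ of $\C^{N+1}$ with $\phi(j(q_n))=(0,\dots,0,n)$ and $\psi(0,\dots,0,\sqrt n)=(0,\dots,0,n)$. Finally it sets $\rho=\|\psi^{-1}\circ\phi\circ j\|^2$, the pullback of $\|w\|^2$ by a proper holomorphic embedding, so that $\rho(q_n)=n$ exactly.
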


  \begin{proof}
    Define $D=\{q_n:n\in\N\}$.
    Let $i:X\to\C^N$ be a closed embedding and define $j:X\to\C^ {N+1}$
    as $j(x)=(i(x),0)$.
    Then $j(D)$ is a discrete subset of $\C^ {N+1}$ which is contained
    in the linear hyperplane $\C^ N\times\{0\}$.
    It follows that $j(D)$ is tame in the sense of Rosay and Rudin
    (see \cite{MR929658}, Corollary 3.6 (a)), i.e., there is a holomorphic
    automorphism $\phi$ of $\C^{N+1}$ with $\phi(j(q_n))=(0,\ldots,0;n)$.
    
    Since $\Lambda=\{(0,\ldots,0;\sqrt n):n\in\N\}$ is likewise tame,
    there is a holomorphic automorphism $\psi$ of $\C^{N+1}$
    with
    \[
    \psi(0,\ldots,0;\sqrt n)=(0,\ldots,0;n)\quad\forall n\in\N
    \]
    Now
    \[
    \rho(x)=\left\Vert\psi^{-1}( \phi(j(x)))\right\Vert^ 2
    \]
    is the desired exhaustion function.
  \end{proof}
    
\begin{theorem}\label{psh-runge}
  Let $X$ be a Stein manifold with strictly plurisubharmonic
  exhaustion function
$\rho$.

Let $K_c=\{\rho\le c\}$.
Then every holomorphic function defined in a neighborhood
of $K_c$ can be uniformly approximated on $K_c$ by holomorphic functions
defined on the whole manifold $X$.
\end{theorem}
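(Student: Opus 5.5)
The proof rests on the Oka--Weil theorem for Stein manifolds: a compact set that is holomorphically convex in $X$ is Runge. So the plan is to show that $K_c$ is $\Olo(X)$-convex, i.e.\ $\widehat{K_c}=K_c$, where
\[
\widehat{K_c}=\{x\in X: |h(x)|\le \max_{K_c}|h|\ \forall h\in\Olo(X)\},
\]
and then invoke Oka--Weil to get approximation of functions holomorphic near $K_c$. Compactness of $K_c$ comes directly from $\rho$ being an exhaustion function.

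First, the plurisubharmonic hull of $K_c$ equals $K_c$. Indeed $\rho$ itself is plurisubharmonic on $X$. Any point $x$ in the plurisubharmonic hull therefore satisfies $\rho(x)\le\max_{K_c}\rho\le c$, so $x\in K_c$.

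Second, on a Stein manifold the $\Olo(X)$-hull and the plurisubharmonic hull of a compact set coincide (H\"ormander, Theorem~5.1.6 and Theorem~5.2.10). Hence $\widehat{K_c}=K_c$, and Oka--Weil (H\"ormander Theorem~5.2.8, or \cite{F2}) yields the statement.

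The main obstacle is the identity of the two hulls. This is a deep theorem: one direction is trivial, since $\log|h|$ is plurisubharmonic. The other direction requires constructing, for $x\notin K_c$, a holomorphic $h$ with $|h(x)|>\max_{K_c}|h|$, which needs $L^2$-methods or the solution of the Levi problem. I would cite it rather than reprove it.

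In the application $X$ will be a domain in $\C$ (or $G$). There a more elementary route is available. For a strictly subharmonic exhaustion, each component of $X\setminus K_c$ must reach the ``boundary'' of $X$: a component $U$ that is relatively compact in $X$ would have $\rho=c$ on $\partial U$ and $\rho>c$ inside, contradicting the maximum principle. Runge's theorem in the form used in the proof of Proposition~\ref{runge-interp} (no relatively compact components of $X\setminus K$) then gives the claim directly.
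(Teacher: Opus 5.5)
Your argument is correct, but it is organized differently from the paper, which gives no argument at all: it simply cites Hörmander's Theorem~5.2.8 as being this statement. You instead show three things. First, $K_c$ is plurisubharmonically convex; this is correct, since $\rho$ is itself one of the test functions, and $K_c$ is compact for the continuous exhaustion $\rho$ of Theorem~\ref{rr-exh}. Second, you pass to $\Olo(X)$-convexity via the equality of the two hulls on Stein manifolds, and you correctly identify which inclusion is the nontrivial one. Third, you conclude with Oka--Weil.

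Note that you and the paper read the reference differently. The paper takes Theorem~5.2.8 to be the sublevel-set statement itself; under that reading your hull step is unnecessary. If the reference is instead Oka--Weil for $\Olo(X)$-convex compacta, as you cite it, then your step $\widehat{K_c}=K_c$ is exactly what is needed to apply it. Either way the reduction is not more elementary, since the hull identity rests on the same $L^2$ (Levi problem) machinery as the statement.

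What your proposal genuinely adds is the last paragraph. The only case the paper uses is $X=G$ open in $\C$, in Proposition~\ref{adapted-exh}. There, a component $U$ of $G\setminus K_c$ that is relatively compact in $G$ would have nonempty boundary in $G$, because a nonempty compact open subset of $\C$ is impossible. That boundary lies in $K_c$, so the maximum principle contradicts $\rho>c$ on $U$. Classical Runge then applies in exactly the form used in the second proof of Proposition~\ref{runge-interp}. This argument uses only subharmonicity and no several-variable theory. It also works verbatim for disconnected $G$, as permitted by Definition~\ref{def-symm}. In the paper's setting it would be a more self-contained justification than the citation.
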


\begin{proof}
This is Theorem 5.2.8. of \cite{MR1045639}.
\end{proof}

\begin{proposition}\label{adapted-exh}
  Let $G$ be a symmetric domain in $\C$ and let $p_n$ be a
  discrete sequence without repetitions in $G^+=\{z\in G:\Im(z)\ge 0\}$.

  Then there exists a sequence of compact subsets $K_n$ such that
  the following properties hold for every $n\in\N$:
  \begin{enumerate}
  \item
    $K_n$ is Runge in $G$,
  \item
    $K_n$ is contained in the interior of $K_{n+1}$.
  \item
    $\cup_{n\in\N} K_n=G$,
  \item
    $p_n,\bar p_n\in int(K_{n+1})\setminus K_n$
    (where $int(K_{n+1})$ denotes the interior of $K_{n+1}$.)
  \end{enumerate}
\end{proposition}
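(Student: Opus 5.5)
The plan is to apply Theorems~\ref{rr-exh} and~\ref{psh-runge} to the Stein manifold $X=G$; every open subset of $\C$ is Stein, and this holds even when $G$ is disconnected. The difficulty is that the $p_n$ must lie in the \emph{annuli} $int(K_{n+1})\setminus K_n$, and the conjugate points $\bar p_n$ must lie there as well. Sublevel sets of a strictly subharmonic exhaustion $\rho$ handle this well, since we can prescribe the values of $\rho$ at the points.

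First I build a discrete sequence without repetitions in $G$ out of the points $p_n,\bar p_n$. If $p_n\in\R$ then $\bar p_n=p_n$. Also, $p_n=\bar p_m$ for $n\ne m$ is impossible: both points lie in $G^+$, so this would force $p_n=p_m\in\R$, contradicting that there are no repetitions. I therefore list the points $p_1,\bar p_1,p_2,\bar p_2,\dots$, dropping $\bar p_n$ whenever $p_n$ is real, and call the result $(q_k)$. This sequence is discrete in $G$, because $(p_n)$ is discrete in $G^+$ and conjugation is a homeomorphism of $G$.

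Rather than asking $\rho(q_k)=k$, I want $\rho(p_n)=\rho(\bar p_n)=n+\tfrac12$. Theorem~\ref{rr-exh} only gives $\rho(q_k)=k$, so I compose with a convex increasing function. Concretely, I take $\chi:\R_0^+\to\R_0^+$ smooth, strictly increasing and convex with $\chi'>0$, such that $\chi(\rho(q_k))$ equals the desired values. The desired values $n+\tfrac12$, listed along the enumeration $(q_k)$, are nondecreasing but repeat for conjugate pairs. To avoid the repetition, I instead aim for values in $(n,n+1)$, for instance $n+\tfrac13$ for $p_n$ and $n+\tfrac23$ for $\bar p_n$. These form a strictly increasing sequence indexed by $k$, with gaps bounded below, so a convex strictly increasing interpolating $\chi$ exists after adjusting the slopes. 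The main subtlety is convexity: a convex $\chi$ needs nondecreasing slopes, which the sequence $k\mapsto$ (value) need not have.

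Simpler route: avoid convexity altogether. Take $\rho$ from Theorem~\ref{rr-exh} and choose real numbers $c_1<c_2<\dots\to\infty$ such that the values $\rho(p_n),\rho(\bar p_n)$ lie in $(c_n,c_{n+1})$. This requires the ordering of $\rho$ along the points to be compatible with $n$. With $\rho(q_k)=k$ the ordering is compatible, so the choice $c_n=$ (the index of $p_n$) $-\tfrac12$ works: $p_n$ and $\bar p_n$ carry consecutive indices, and the $c_n$ increase to $\infty$. I then set $K_n=\{\rho\le c_n\}$.

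It remains to verify the four properties. Each $K_n$ is compact because $\rho$ is an exhaustion, and it is Runge in $G$ by Theorem~\ref{psh-runge}. The inclusion $K_n\subset\{\rho<c_{n+1}\}\subset int(K_{n+1})$ holds by continuity of $\rho$. The union of the $K_n$ is $G$ because $c_n\to\infty$. Finally, $p_n,\bar p_n$ have $\rho$-values in $(c_n,c_{n+1})$, so they lie in $int(K_{n+1})\setminus K_n$.

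The main obstacle is the careful enumeration for real $p_n$ versus conjugate pairs, which is easy. The only genuinely cited depth is Theorem~\ref{rr-exh}, which requires $G$ to be Stein; this holds for all open subsets of $\C$.
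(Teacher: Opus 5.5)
Your proof is correct and is essentially the paper's argument: interleave $p_n,\bar p_n$ into one discrete sequence, take the exhaustion $\rho$ with $\rho(q_k)=k$ from Theorem~\ref{rr-exh}, set $K_n=\{\rho\le\rho(p_n)-\frac12\}$ (the paper's $\{\rho\le 2n-\frac32\}$), and get the Runge property from Theorem~\ref{psh-runge}. Dropping $\bar p_n$ when $p_n$ is real fixes a small oversight in the paper, whose enumeration $q_{2n-1}=p_n$, $q_{2n}=\bar p_n$ has a repetition when $p_n\in\R$, so Theorem~\ref{rr-exh} cannot be applied there. The convexity digression you abandon is not used and should be deleted.
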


\begin{proof}
    We consider the sequence $q_n$ in $G$ defined as
  \[
  \forall n\in \N: q_{2n-1}=p_n,\ q_{2n}=\bar p_n
  \]
  Due to Theorem~\ref{rr-exh} there is   
  a strictly plurisubharmonic exhaustion function $\rho$ on $G$
  with
  \[
  \forall n\in\N:\ \rho(q_{2n-1})=\rho(p_n)=2n-1,\quad
  \rho(q_{2n})=\rho(\bar p_n)=2n.
  \]
  We observe that
  \[
  \forall n\in\N:
  2n-\frac 32 < 2n-1=\rho(p_n) <\rho(\bar p_n)=2n < 2(n+1)-\frac 32
  \]
  and define
  \[
  K_n=\left\{x\in G:\rho(x)\le 2n-\frac 32\right\}
  \]
  Note that each $K_n$ is Runge (Theorem~\ref{psh-runge}).
  The other assertions follow from the choice of $\rho$.
  \end{proof}

\section{Complex Analytic Preparation}
\subsection{Constructing holomorphic functions}

\begin{proposition}\label{step}
  Let $G\subset \C$ be a symmetric domain,
  $p=x+yi\in G$,
  $K$ a Runge compact subset in $G\setminus\{p,\bar p\}$,
  $\varepsilon>0$ 
  and let $E\subset G\setminus\{p,\bar p\}$
  be a finite set.
  
  Then there exists a holomorphic function $\phi\in\Olo(G)$
  satisfying
  \begin{enumerate}
  \item
    $\phi(p)=0$.
  \item
    $\phi(\bar p)=-2yi$.
  \item
    $\phi(z)\ne 0\ \forall z\in G\setminus\{p\}$,
  \item
    $||\phi-1||_K<\varepsilon$.
  \item
    $\phi(z)=1\ \forall z\in E$.
  \end{enumerate}
\end{proposition}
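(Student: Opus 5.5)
The plan is to look for $\phi$ of the form
\[
\phi(z)=(z-p)\,e^{g(z)},\qquad g\in\Olo(G).
\]
With this ansatz $(i)$ and $(iii)$ hold automatically, since $e^g$ never vanishes and $z-p$ vanishes only at $p$. Condition $(ii)$ becomes $(\bar p-p)e^{g(\bar p)}=-2yi$, i.e.\ $e^{g(\bar p)}=1$. So it suffices to have $g(\bar p)=0$. If $y=0$ then $p=\bar p$, and $(ii)$ reduces to $(i)$, so this condition can be dropped. Conditions $(iv)$ and $(v)$ ask that $e^{g}$ be uniformly close to $1/(z-p)$ on $K$, and that $e^{g(e)}=1/(e-p)$ for every $e\in E$. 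I would therefore reduce everything to an approximation-with-interpolation problem for $g$ and then apply Proposition~\ref{runge-interp}.

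\textbf{Step 1 (a logarithm of $z-p$ near $K$).} I would find an open neighbourhood $U$ of $K$ in $G\setminus\{p,\bar p\}$ and a holomorphic function $\ell$ on $U$ with $e^{\ell}=z-p$. I expect this to be the main obstacle, and the argument principle shows it is also necessary. If a loop $\gamma$ near $K$ had nonzero winding number of $z-p$ around $p$, then a function $\phi$ close to $1$ on $\gamma$ would have winding $0$ there. That would force the zero count of $\phi$ inside $\gamma$ to differ from the single simple zero at $p$, contradicting $(iii)$.

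To get the logarithm I would read the hypothesis as ``$K$ is Runge in $G$ and $p,\bar p\notin K$''; this is how the sets $K_n$ of Proposition~\ref{adapted-exh} are used. I would first try the following. Runge-ness means every component of $G\setminus K$ is non-relatively compact in $G$. So $p$ can be joined inside $G\setminus K$ to points arbitrarily close to $\partial G$. Shrinking $U$ to a Runge neighbourhood, I would argue that the period $\frac1{2\pi i}\oint_\gamma\frac{dz}{z-p}$ is locally constant in $p$ along such a path and vanishes near the boundary. Since $U$ has only finitely many relevant cycles, this gives $\ell$ on $U$. If this fails for holes of $G$, I would fall back on the homological statement that $H_1(U)\to H_1(G\setminus\{p\})$ has the required triviality for the class of $z-p$.

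\textbf{Step 2 (interpolation data).} Let $S=E\cup\{\bar p\}$, a finite subset of $G\setminus\{p\}$ disjoint from $K$ after shrinking $U$ near $K$. Define $h$ on $U\cup(\text{small discs around }S)$ by $h=-\ell$ on $U$. Around each $e\in E$, let $h$ be a local branch of $-\log(z-p)$. Around $\bar p$ (when $y\ne0$), let $h\equiv 0$. Each value $h(e)$ satisfies $e^{h(e)}=1/(e-p)$, and $h(\bar p)=0$.

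\textbf{Step 3 (approximation).} Put $M=\max_K|z-p|$. Apply Proposition~\ref{runge-interp} to $h\in\Olo(K\cup S)$ to obtain $g\in\Olo(G)$ with $\|g-h\|_K<\delta$ and $g=h$ on $S$. Then on $K$
\[
|\phi-1|=|e^{\ell}e^{g}-1|=|e^{g-h}-1|\le e^{\delta}-1,
\]
which is $<\varepsilon$ for $\delta$ small; note $M$ does not even enter. On $S$ we get $\phi(e)=(e-p)e^{h(e)}=1$ and $\phi(\bar p)=\bar p-p=-2yi$. This verifies $(i)$ through $(v)$. Everything except Step 1 is routine.
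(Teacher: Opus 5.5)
There is a genuine gap, and it is in your Step~1. The claim that $z-p$ has a logarithm on a neighbourhood of $K$ is false in general, and once it fails, no choice of $g\in\Olo(G)$ can rescue the ansatz $\phi=(z-p)e^{g}$.

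Take $G=\C\setminus\{0\}$, $K=\{|z|=1\}$, $p=\bar p=\frac12$, $E=\{\}$. Neither component of $G\setminus K$ is relatively compact in $G$, so $K$ is Runge in $G$. Yet $\frac1{2\pi i}\oint_{|z|=1}\frac{dz}{z-1/2}=1$, whereas $\oint_{|z|=1}g'(z)\,dz=0$ for every $g\in\Olo(G)$. So $(z-\frac12)e^{g}$ has winding number $1$ along $K$ and can never satisfy $\|\phi-1\|_K<\frac12$. The proposition itself does hold here: take $\phi=\frac{z-1/2}{z}\,e^{-h}$ with $h$ a Laurent polynomial approximating a branch of $\log(1-\frac1{2z})$ on $K$.

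Your argument-principle ``necessity'' reasoning breaks down because a loop near $K$ may enclose points of $\C\setminus G$ (here $0$), where $\phi$ is not defined. The same example defeats your homological fallback. Your period argument fails because the period of $\frac{dz}{z-p}$ vanishes once $p$ is moved into the unbounded component of $\C\setminus K$, not when $p$ is moved towards a bounded hole of $G$ enclosed by the loop.

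The missing idea is what the paper's M\"obius substitution $z\mapsto 1/(z-q)$ accomplishes. Let $W$ be the component of $G\setminus K$ containing $p$.
If $W$ is unbounded, then $p$ lies in the unbounded component of $\C\setminus K$, and your Step~1 is correct.
If $W$ is bounded, the Runge property gives a point $q\in\overline W\cap\partial G$. Since $q\notin K$ and the boundary of the component $C$ of $\C\setminus K$ containing $W$ lies in $K$, we get $q\in C$, so $p$ and $q$ can be joined by a path avoiding $K$.

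Consequently $\frac{z-p}{z-q}$ has a logarithm near $K$, is holomorphic on $G$ because $q\notin G$, and vanishes only at $p$. Up to a constant factor it equals $w-\frac1{p-q}$ in the coordinate $w=\frac1{z-q}$. With $\frac{z-p}{z-q}$ in place of $z-p$, your Steps~2--3 go through; only the interpolation values change, to $e^{g(e)}=\frac{e-q}{e-p}$ on $E$ and $e^{g(\bar p)}=\bar p-q$ if $y\neq0$.

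Apart from this, your route matches the paper's. The paper fixes the value at $\bar p$ by a second exponential factor instead of including $\bar p$ in one interpolation, which makes no difference. One small slip: $E$ may meet $K$, so $S=E\cup\{\bar p\}$ need not be disjoint from $K$. This is harmless, since on $E\cap K$ the function $h=-\ell$ already takes the required values.
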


\begin{proof}
  Without loss of generality, we assume $\varepsilon<1$.
  
  Consider the connected component $W$ of $G\setminus K$ which contains $p$.

  First we consider the case where $W$ is bounded (in $\C$).
  Since $K$ is Runge in $G$, this implies
  $\overline{W}\cap\partial G\ne\{\}$.
  Let $q\in \overline{W}\cap\partial G$.
  Using the M\"obius transformation
  $z\mapsto 1/(z-q)$, we reduce this case to case where $W$ is unbounded.

  Thus without loss of generality we may assume that $W$ is unbounded.
  
  It follows that there is a continuous curve
  $\gamma:[0,+\infty[\to W$
  with $\gamma(0)=p$ and $\lim_{t\to+\infty}|\gamma(t)|=+\infty$.
  Let $Sp(\gamma)$ denote the image of $\gamma$.
  
  As a consequence, there exists a branch $\lambda(z)$ of $\log(z-p)$ on
  $\Omega=G\setminus Sp(\gamma)$.

  Since $E$ is finite, there is no loss in generality in assuming that
  $\gamma$ avoids $E$, i.e., $E\cap Sp(\gamma)=\{\}$.

  Then
  \[
  \forall z\in\Omega:\exp(\lambda(z))=z-p.
  \]
    Due to the Runge assumption on $K$
  we have Runge with interpolation (compare Proposition~\ref{runge-interp}).
  Hence for every $\delta>0$ there is a
  holomorphic function $h$ on $G$ with
  \[
  |\lambda(z)-h(z)|<\delta\ \forall z\in K.
  \]
  and
  \[
  \forall z\in E: \lambda(z)=h(z).
  \]
  Since $\exp$ is continuous with $\exp(0)=1$,
  we may choose $\delta>0$ such that
  \[
  \forall w\in\C: |w|<\delta \ \implies\ |e^w-1|<\frac\varepsilon 3.
  \]

  Then
  \[
  \bigg\vert
  \exp
  \Big( \lambda(z))-h(z)
  \Big)-1
  \bigg\vert
  <\frac \varepsilon 3
  \quad\forall z\in K
  \]

  We recall that $\forall z\in\Omega:\exp(\lambda(z))=\mu(z)=z-p$.

  We define
  \[
  \psi(z)=(z-p)e^{-h(z)}.
  \]
  and observe that $\psi$ (taken as $\phi$) evidently
  satisfies assertions
  $(i)$ and $(iii)$.
  
  Moreover, since $\forall z\in\Omega:\psi(z)=\exp(\lambda(z)-h(z))$,
  conditions $(iv)$, $(v)$ are satisfied as well.
  
  Condition $(ii)$ still has to be addressed.

  If $\bar p=p$, then $y=0$ and condition $(ii)$ is trivially satisfied.
  Hence from now on we assume $\bar p\ne p$.
  
  Note that in this case $\psi(\bar p)\ne 0$,

    We choose $\alpha\in\C$ such that
  \[
  \exp(\alpha)=-\frac{2yi}{\psi(\bar p)}
  \]
  Since $K$ is Runge in $G$,
  we may apply  Proposition~\ref{runge-interp} (with $S=E\cup\{\bar p\}$)
  and choose a holomorphic function
  $g$ on
  $G$ such that
  \begin{enumerate}
    \item
    $||e^g-1||_K<\frac\varepsilon 3$.
    \item
      $g(\bar p)=\alpha$.
    \item
      $\forall z\in E: g(z)=0$.
  \end{enumerate}
  Finally we set $\phi(z)=e^{g(z)}\psi(z)$.
  By construction
  \[
  \phi(z)=0\ \iff\ \psi(z)=0\ \iff\  z=p
  \]
  This yields $(i)$ and $(iii)$.
  Condition $(ii)$ follows from $g(\bar p)=\alpha$.
  Assertion $(v)$ is due to
  \[
  \forall z\in E: \psi(z)=1, g(z)=0
  \]
    To check $(iv)$, we observe
  \begin{align*}
  \forall z\in K:
  &|\phi(z)-1|=|e^{g(z)}\psi(z)-1|\\
  &=
  |\left(e^ {g(z)}-1\right)\psi(z)+(\psi(z)-1)|<
  \frac\varepsilon3\underbrace{(1+\frac\varepsilon3)}_{<2}
  + \frac\varepsilon3<\varepsilon
  \end{align*}
    \end{proof}

\chapter{Main Results: Prescribed Values}
\section{Prescribing values}

\begin{lemma}\label{lem-1}
  Let $G$ be a symmetric domain in $\C$,
  $\Lambda\subset G^+=\{z\in G:\Im(z)\ge 0\}$ a discrete
  subset and let $\zeta:\Lambda\to\C^N$ be a map such that
  $\zeta(z)\in\R^N\ \forall z\in \Lambda\cap\R$.
  
  Then there exists a holomorphic map $F:G\to\C^N$ such that:
  \begin{enumerate}
  \item
    $\forall z\in \Lambda:\ F(z)=\zeta(z)$.
  \item
    $\forall z\in G: \ \overline{F(\bar z)}=F(z)$
  \end{enumerate}
\end{lemma}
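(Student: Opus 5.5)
The plan is to first solve an ordinary interpolation problem on a symmetric discrete set and then symmetrize. Put
\[
\Lambda'=\Lambda\cup\bar\Lambda=\Lambda\cup\{\bar z:z\in\Lambda\}\subset G .
\]
This set is discrete in $G$. Indeed, $\Lambda$ is discrete in $G^+$, so it has no accumulation point in $G^+$. The set $\bar\Lambda$ lies in $\{\Im z\le 0\}$, so the only possible accumulation points of $\Lambda'$ in $G$ lie on $G\cap\R$. Such a point would also be an accumulation point of $\Lambda$ in $G^+$, since conjugation fixes $\R$ and preserves distances. That is impossible.

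Next we extend $\zeta$ to a map $\zeta':\Lambda'\to\C^N$ by setting
\[
\zeta'(z)=\zeta(z),\qquad \zeta'(\bar z)=\overline{\zeta(z)}\qquad (z\in\Lambda).
\]
This is well defined: the two rules overlap only at points with $z=\bar z$, i.e.\ $z\in\Lambda\cap\R$, and there $\zeta(z)\in\R^N$ by hypothesis, so $\zeta(z)=\overline{\zeta(z)}$.

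By the classical interpolation theorem (Weierstra\ss/Mittag-Leffler) applied componentwise, there is a holomorphic $H:G\to\C^N$ with $H|_{\Lambda'}=\zeta'$. Here $G$ is an open subset of $\C$; if it is disconnected, we apply the theorem on each component, which is harmless. Alternatively one can argue via Proposition~\ref{adapted-exh} and Proposition~\ref{runge-interp}, but the classical result for open subsets of $\C$ suffices.

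Finally we symmetrize:
\[
F(z)=\tfrac12\left(H(z)+\overline{H(\bar z)}\right).
\]
Since $G$ is symmetric, $z\mapsto\overline{H(\bar z)}$ is defined on $G$ and holomorphic, so $F$ is holomorphic. One checks directly that $\overline{F(\bar z)}=F(z)$. For $z\in\Lambda$ we have $\bar z\in\Lambda'$, and therefore
\[
\overline{H(\bar z)}=\overline{\zeta'(\bar z)}=\overline{\overline{\zeta(z)}}=\zeta(z),
\]
so $F(z)=\tfrac12(\zeta(z)+\zeta(z))=\zeta(z)$.

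The only delicate points are the discreteness of $\Lambda'$ near the real axis and the compatibility of $\zeta'$ on $\Lambda\cap\R$, which is exactly where the reality hypothesis is used. I expect no real obstacle.
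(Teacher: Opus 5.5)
Your proposal is correct and follows essentially the same route as the paper: interpolate on the symmetrized discrete set $\Lambda\cup\{\bar z: z\in\Lambda\}$ with values $\zeta(z)$ and $\overline{\zeta(z)}$, then average via $F(z)=\frac12\bigl(H(z)+\overline{H(\bar z)}\bigr)$. You also spell out two points the paper leaves implicit: why the symmetrized set stays discrete near $G\cap\R$, and why the extended data are well defined on $\Lambda\cap\R$, which is exactly where the reality hypothesis is used.
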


\begin{proof}
  We start with the observation that
  \[
  \tilde \Lambda=\Lambda\cup\{z\in G:\bar z\in \Lambda\}
  \]
  is a discrete subset of the domain $G$.
  Hence every map from $\tilde \Lambda$ to a complex
  vector space may be extended to a holomorphic map
  defined on all of $G$ (see e.g.~\cite{MR1185074}, Theorem~26.7).
  We choose a holomorphic map $F_0:G\to \C^N$ such that
  \begin{itemize}
  \item
    $F_0(z)=\zeta(z)\ \forall z\in \Lambda$.
  \item
    $F_0(\bar z)=\overline{\zeta(z)}\ \forall z\in \Lambda$.
  \end{itemize}
  Then $F:G\to\C^N$ defined as
  \[
  F(z)=\frac 12\left( F_0(z)+\overline{F_0(\bar z)}\right)
  \]
  has the desired properties.
\end{proof}

\begin{theorem}\label{prescribe-values}
  Let $G$ be a symmetric domain in $\C$.
  Let $x_n+y_ni$ 
  be a discrete sequence of distinct elements
  in $G^ +=\{z\in\C:\Im(z)\ge 0\}$.

  Let $A$ be a finite-dimensional alternative real
  $*$-algebra.
  For every $n\in\N$ let $L_n:A\to A$
  be an  (left) affine linear function,
  defined as  $L_n:q\mapsto a_n+qb_n$ for some $a_n,b_n\in A$. 

  Let $Q_A$ be the quadratic cone of $A$ and let $\Omega_G$ be the
  axially symmetric domain in $Q_A$
  associated to $G$. Then there exists a slice regular function $f$ on the 
  $\Omega_G$ such that $f$ and $L_n$ agree on $x_n+y_n\S_A$
  for every $n\in\N$.
\end{theorem}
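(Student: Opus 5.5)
The plan is to reduce the statement to the interpolation result for stem functions, Lemma~\ref{lem-1}, and then translate back through the correspondence of \paragraph\ref{corr}.

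\textbf{Step 1: the stem function of an affine map.} The affine map $L_n(q)=a_n+qb_n$ is itself the restriction of a slice regular function, namely the polynomial $q\mapsto a_n+qb_n$. Its stem function is the $\Ac$-valued polynomial
\[
P_n(z)=a_n\tensor 1+z\,b_n .
\]
Writing $z=x+yi$, this is $P_n(z)=(a_n+xb_n)+i\,(yb_n)$. Indeed, with $F'=a_n+xb_n$ and $F''=yb_n$ we get
\[
F'+IF''=a_n+xb_n+I(yb_n)=a_n+(x+yI)b_n .
\]
Here $I(yb_n)=(yI)b_n$ holds because $y$ is real, so no associativity issue arises.

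\textbf{Step 2: target values for the stem function.} By the formula $f(x+yI)=F'(z)+IF''(z)$, a slice regular $f$ agrees with $L_n$ on the whole pseudosphere $x_n+y_n\S_A$ as soon as its stem function satisfies $F(x_n+y_ni)=P_n(x_n+y_ni)$. In that case the two stem functions coincide at the point $z_n=x_n+y_ni$, so $f$ and $L_n$ agree for every $I\in\S_A$.

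So it suffices to find a holomorphic stem function $F:G\to\Ac$ with
\[
F(z_n)=\zeta_n:=(a_n+x_nb_n)\tensor 1+(y_nb_n)\tensor i \quad\text{for all } n.
\]

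\textbf{Step 3: interpolation via Lemma~\ref{lem-1}.} Choose a real basis of $A$, which gives identifications $A\simeq\R^N$ and $\Ac\simeq\C^N$. Under these, the stem-function condition $F(\bar z)=\overline{F(z)}$ becomes condition (ii) of Lemma~\ref{lem-1}, with conjugation acting on the $\C$-factor.

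The set $\Lambda=\{z_n\}$ is discrete in $G^+$ by hypothesis. The compatibility condition of the lemma must also be checked: if $z_n\in\R$, then $y_n=0$, so $\zeta_n=a_n+x_nb_n\in A\simeq\R^N$, which is exactly what is required.

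Lemma~\ref{lem-1} therefore produces a holomorphic $F:G\to\Ac$ with $F(\bar z)=\overline{F(z)}$ and $F(z_n)=\zeta_n$. Being holomorphic and satisfying this symmetry, $F$ is a stem function, and it induces a slice regular $f$ on $\Omega_G$. By Step 2, $f=L_n$ on each $x_n+y_n\S_A$.

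\textbf{Main obstacle.} The only delicate point is Step 1: verifying that the stem function of $q\mapsto a_n+qb_n$ is really $a_n+zb_n$ under the paper's conventions, that is, that the $\Ac$-product is taken in the correct order and real scalars commute past $I$. The rest is a direct application of Lemma~\ref{lem-1}.
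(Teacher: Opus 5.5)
Your proposal is correct and follows the paper's proof. The paper likewise sets $\zeta(x_n+y_ni)=a_n+x_nb_n+iy_nb_n$, notes that this value lies in $A$ when $y_n=0$, and applies Lemma~\ref{lem-1} to get a stem function taking these values. It then checks directly that $f(x_n+y_nI)=(a_n+x_nb_n)+Iy_nb_n=a_n+(x_n+y_nI)b_n$; your Step~1, which frames this via the stem function $a_n+zb_n$ of the polynomial, is just a repackaging of that final computation.
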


\begin{proof}
  Let $\Lambda=\{x_n+y_ni:n\in\N\}$.
  We define a map $\zeta:\Lambda\to\Ac$ as 
  \[
  \zeta(x_n+y_ni)=a_n+x_nb_n+iy_nb_n
  \]
  Note that
  \[
  x_n+y_ni\in\R \iff y_n=0
  \implies y_nb_n=0\iff  a_n+x_nb_n+iy_nb_n\in A
  \]
  Hence there is a holomorphic function $F:G\to\Ac$
  such that $F(z)=\zeta(z)\ \forall z\in \Lambda$ and
  $F(z)=\overline{F(\bar z)}\ \forall z\in G$
  (Lemma~\ref{lem-1}).

  By construction, the slice regular function $f$ associated to the stem
  function $F$ satisfies
  \begin{align*}
  \forall n\in \N:\forall I\in\S_A:
  f(x_n+y_nI)
  &=\left(a_n+x_nb_n\right)+Iy_nb_n\\
  &= a_n + \left(x_n+Iy_n\right)b_n\\
  &= L_n\left(x_n+y_nI\right)\\
  \end{align*}
  \end{proof}

\begin{corollary}\label{p-v-cor1}
  Let $D$ be a discrete subset of $\Omega_G$ such that
  for every $x,y\in\R$ the set $D\cap(x+y\S_A)$ has at most one
  element.

  Assume that $\pi(D)$ is discrete in $G^ +$.

  Then every map $\zeta:D\to A$ extends to a slice regular function
  $f:\Omega_G\to A$.
\end{corollary}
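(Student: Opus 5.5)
The plan is to reduce the corollary to Theorem~\ref{prescribe-values} by choosing, for each point of $D$, a suitable affine linear function $L_n$. Since $\pi(D)$ is discrete in $G^+$ and each fibre $D\cap(x+y\S_A)$ has at most one element, the map $\pi|_D$ is injective. (For $x+yI$ and $x-yI$ we have $x-yI=x+y(-I)$ with $-I\in\S_A$, so both lie in the same set $x+y\S_A$, and $\pi$ identifies exactly these.) Hence we may enumerate $D=\{w_n\}$ (finite or countable) with $\pi(w_n)=x_n+y_ni$ pairwise distinct, $y_n\ge 0$, forming a discrete sequence in $G^+$. Write $w_n=x_n+y_nI_n$ with $I_n\in\S_A$. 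If $D$ is finite, the argument is the same with finitely many indices, or we may pad the sequence with dummy points chosen so that discreteness is kept.

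For each $n$ we need $a_n,b_n\in A$ with $a_n+w_nb_n=\zeta(w_n)$. The simplest choice is $b_n=0$ and $a_n=\zeta(w_n)$, so that $L_n$ is the constant map $\zeta(w_n)$. This is a legitimate (left) affine linear function, and it also handles the real points ($y_n=0$) without any compatibility issue.

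Theorem~\ref{prescribe-values} then provides a slice regular $f$ on $\Omega_G$ agreeing with $L_n$ on $x_n+y_n\S_A$. In particular $f(w_n)=L_n(w_n)=\zeta(w_n)$ for every $n$, so $f|_D=\zeta$.

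The only point needing care is the bookkeeping in the first step: checking that $\pi(D)$ is discrete in $G^+$ and that $\pi|_D$ is injective, so that the hypotheses of Theorem~\ref{prescribe-values} (a discrete sequence of distinct points in $G^+$) are met. The discreteness of $D$ itself is not used beyond this. Everything else is immediate.
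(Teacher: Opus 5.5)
Your proof is correct and matches the paper's: the paper also enumerates $D=\{x_n+y_nI_n\}$, picks affine linear $L_n$ with $L_n(x_n+y_nI_n)=\zeta(x_n+y_nI_n)$, and invokes Theorem~\ref{prescribe-values}. Your constant choice $b_n=0$, $a_n=\zeta(w_n)$ is a valid instance of that unspecified choice, and your check that $\pi|_D$ is injective makes explicit a step the paper leaves implicit.
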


\begin{proof}
  We write $D$ as $D=\{x_n+y_nI_n:n\in\N\}$ with
  $x_n\in\R, y_n\in\R^+_0, I_n\in\S_A$, choose
  affine linear functions $L_n:A\to A$ with
  \[
  \forall n\in\N:
  L_n(x_x+y_nI_n)=\zeta(x_n+y_nI_n)
  \]
  and invoke Theorem~\ref{prescribe-values}.
\end{proof}

\begin{remark}
  
  Using Definition~\ref{def-adapted}, the statement of the above
  corollary may be reformulated as follows:
  {\em  For every discrete adapted subset $D\subset \Omega_G$ every
    map $\zeta:D\to A$ extends to a slice regular function $f:\Omega_G\to A$.
  }
  
\end{remark}
\begin{remark}
  If $\S_A$ is non-compact, then $\pi(D)$ may be
  a non-discrete subset of $G$ even if
  $D$ is discrete in $\Omega_G$.
  We will discuss this issue in detail in a later paper
  \cite{cone}.
\end{remark}

\begin{corollary}\label{p-v-cor2}
  Let  $A$ be a finite-dimensional real alternative $*$-algebra.
  Assume in addition that $A$ is a {\em division algebra}.
  Let $D$ be a discrete subset of $\Omega_G$ such that
  for every $x,y\in\R$ the set $D\cap(x+y\S_A)$ has at most two
  elements.

  Then every map $\zeta:D\to A$ extends to a slice regular function
  $f:\Omega_G\to A$.
\end{corollary}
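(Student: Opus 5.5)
The approach is to reduce Corollary~\ref{p-v-cor2} to Theorem~\ref{prescribe-values}. For each sphere $x+y\S_A$ meeting $D$ we construct an affine linear map $L:q\mapsto a+qb$ that takes the prescribed values on the at most two points of $D$ lying on it. The construction works because $A$ is a division algebra.

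First, discreteness of $\pi(D)$. Since $A$ is a division algebra, $A\cong\C,\H$ or $\Oct$, and $\S_A$ is compact. So $\pi:\Omega_G\to G^+$ is proper. A discrete subset $D$ of $\Omega_G$ is then closed and discrete, and its image $\pi(D)$ is again closed and discrete in $G^+$, because the fibres of $\pi$ are compact. Hence we may enumerate $\pi(D)=\{x_n+y_ni\}$ as a discrete sequence of distinct elements of $G^+$, with $y_n\ge 0$. By hypothesis each fibre $D\cap(x_n+y_n\S_A)$ has one or two elements.

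Next, the affine functions, case by case. If the fibre is a single point $w$, we take $b_n=0$ and $a_n=\zeta(w)$. If the fibre has two distinct points $w_1=x_n+y_nI$ and $w_2=x_n+y_nJ$, then necessarily $y_n>0$ and $I\ne J$. We must solve
\[
a_n+w_1b_n=\zeta(w_1),\qquad a_n+w_2b_n=\zeta(w_2).
\]
Subtracting gives $(w_1-w_2)b_n=\zeta(w_1)-\zeta(w_2)$, where $w_1-w_2=y_n(I-J)\ne 0$. Because $A$ is a division algebra, $w_1-w_2$ is invertible. We set $b_n=(w_1-w_2)^{-1}(\zeta(w_1)-\zeta(w_2))$. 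By Corollary~\ref{inv-ass} this satisfies $(w_1-w_2)b_n=\zeta(w_1)-\zeta(w_2)$, even though $A$ need not be associative. We then put $a_n=\zeta(w_1)-w_1b_n$. The second equation follows by distributivity:
\[
a_n+w_2b_n=\zeta(w_1)-(w_1-w_2)b_n=\zeta(w_2).
\]

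Finally, we apply Theorem~\ref{prescribe-values} with these $L_n$. It gives a slice regular $f$ on $\Omega_G$ that agrees with $L_n$ on each $x_n+y_n\S_A$, and in particular $f|_D=\zeta$.

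The only delicate points are the discreteness of $\pi(D)$, which uses compactness of $\S_A$ in the division case, and solving the linear equation in a non-associative algebra, which Corollary~\ref{inv-ass} handles. Neither should be a serious obstacle.
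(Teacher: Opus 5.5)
Your proposal is correct and follows essentially the same route as the paper: compactness of $\S_A$ makes $\pi(D)$ discrete, and invertibility of $w_1-w_2=y_n(I-J)$ in the division algebra yields an affine map $q\mapsto a_n+qb_n$ through the at most two prescribed values, after which Theorem~\ref{prescribe-values} finishes the argument. You also justify $(w_1-w_2)\bigl((w_1-w_2)^{-1}\beta\bigr)=\beta$ in the non-associative setting via Corollary~\ref{inv-ass}, using the two-sidedness of inverses from Corollary~\ref{inv-unic}; the paper leaves this step implicit.
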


\begin{proof}
  Since $A$ is  a division $*$-algebra, we have
  $A\simeq\C$, $A\simeq \H$
  or $A\simeq\Oct$.

  This implies in particular that $\S_A$ is compact. As a consequence,
  the property of $D$ being discrete in $\Omega_G$ implies that
  \[
  \pi(D)=\{x+yi: x,y\in\R, y\ge 0\ \exists I\in\S_A:x+yI\in D\}
  \]
  is discrete in $G$.
  
  Let $p,q\in x+y\S_A$ and let $\alpha:\{p,q\}\to A$ be any map.
  Assume $p\ne q$.
  Since $A$ is a division algebra, $(q-p)^{-1}$ exists, and we may
  define
  \begin{align}
    L(w)&=\alpha(p)+(w-p)(q-p)^{-1}\left(\alpha(q)-\alpha(p)\right)
    \label{eqLdef} \\
  &=w\left( (q-p)^{-1}\left(\alpha(q)-\alpha(p)\right)\right)
    +\alpha(p)-p\left( (q-p)^{-1}\left(\alpha(q)-\alpha(p)\right)\right)
    \nonumber
  \end{align}
  Thus every map $\alpha:\{p,q\}\to A$ extends to an affine-linear
  map.
  Consequently  the assertion of the corollary follows from 
  Theorem~\ref{prescribe-values}.
\end{proof}
  
\begin{remark}\label{ex-aff}
  If $A$ is not a division algebra, the conclusion of Corollary~\ref{p-v-cor2}
  may fail.
  
  The background: $A$ is a division algebra
  if and only if the following property is satisfied:
  {\em Given $p,q,a,b\in A$ with $p\ne q$ there exists an
    affine linear function $L(x)=x\alpha+\beta$ (with $\alpha,\beta\in A$)
    such that $L(p)=a$ and $L(q)=b$.}

  In fact this condition translates into the following linear system
  of equations for $\alpha,\beta$:
  \begin{align*}
    &  p\alpha +\beta =a,\quad q\alpha+\beta=b\\
    \iff &  (p-q)\alpha=a-b,\quad \beta=a-p\alpha\\
  \end{align*}
  Now $(p-q)\alpha=a-b$ is solvable for all $a,b\in A$ iff
  $p-q$ is invertible.

  As a consequence, the conclusion of Corollary~\ref{p-v-cor2}
  fails, if there are $I,J\in\S_A$ such that $I-J$ is
  a zero divisor.

  For example, this happens, if $I^2=-1=J^2$ and $IJ=JI$:
  Then $(I+J)(I-J)=0$. To see a concrete example, consider
  $A=\C\oplus\C$ as a real algebra with $I=(i,i)$ and $J=(i,-i)$.
\end{remark}

\section{Prescribing non-zero values}

\begin{proposition}\label{prescribe-val-inv-dis}
  Let $A$ be a finite-dimensional alternative real $*$-algebra
  with a discrete adapted set $D\subset \Omega_G\subset Q_A$.

  Let $\zeta:D\to A^*$ be any function.

  Assume $D\cap\R=\{\}$.

  Then there exists a slice regular function $f:\Omega_G\to A$
  with $f|_D=\zeta$ and
  \[
  \forall w\in\Omega_G: f(w)\ne 0
  \]
\end{proposition}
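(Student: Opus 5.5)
The plan is to build a stem function $F:G\to\Ac$ that is invertible everywhere, of the form $F=\exp(H)$. Corollary~\ref{z-c1} then rules out zeroes of the associated slice regular function $f$: if $f(x+yI)=0$, then $F(x+yi)$ would be a zero divisor. The values on $D$ will be forced using Fact~\ref{sphere-constant}: if $F(x+yi)=c\in A$, then $f\equiv c$ on all of $x+y\S_A$. This needs $\zeta$ to be constant on each pseudosphere $x+y\S_A$ that meets $D$. That holds automatically when $D$ meets such a sphere in at most one point. For spheres contained in $D$, I will assume $\zeta$ is constant on them, which is the form used in Corollary~\ref{cor-p-v-i}. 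Note that a discrete $D$ can only contain whole spheres when $\S_A$ is finite.

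\emph{Step 1: logarithms.} Write $\pi(D)=\{z_n\}\subset G^+$. This set is discrete because $D$ is adapted, and $\Im z_n>0$ because $D\cap\R=\{\}$. Let $c_n\in A^*$ be the value of $\zeta$ on the sphere over $z_n$. By Corollary~\ref{unit-subar}, $c_n\in\Ac^*$. Proposition~\ref{alt-eq} $(i)\Rightarrow(xii)$, applied in the finite-dimensional complex alternative algebra $\Ac$, gives $w_n\in\Ac$ with $\exp(w_n)=c_n$. In general $w_n\notin A$; for instance, a negative real $c_n$ has no real logarithm.

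\emph{Step 2: symmetric interpolation.} Identify $\Ac\cong\C^N$ so that complex conjugation on $\Ac$ (acting on the $\C$-factor) becomes coordinatewise conjugation. Apply Lemma~\ref{lem-1} with $\Lambda=\{z_n\}$. Its hypothesis that values at real points be real is vacuous here, precisely because $D\cap\R=\{\}$; this is where that assumption is used. We obtain a holomorphic $H:G\to\Ac$ with $H(z_n)=w_n$ and $H(\bar z)=\overline{H(z)}$.

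\emph{Step 3: exponentiate.} Set $F=\exp\circ H$. This is holomorphic, since the exponential series converges locally uniformly with respect to a submultiplicative norm (Lemma~\ref{a-norm}). Conjugation is a real algebra automorphism of $\Ac$, so $F(\bar z)=\exp(\overline{H(z)})=\overline{F(z)}$, and $F$ is a legitimate stem function. Since $H(z)$ commutes with $-H(z)$, we have $\exp(-H)\exp(H)=1$, so $F(z)\in\Ac^*$ for all $z\in G$. We also get $F(z_n)=c_n\in A$ and $F(\bar z_n)=\overline{c_n}=c_n$. Let $f$ be the slice regular function with stem $F$. Fact~\ref{sphere-constant} gives $f=c_n=\zeta$ on $x_n+y_n\S_A\supset D\cap(x_n+y_n\S_A)$. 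Corollary~\ref{z-c1} shows $f$ has no zeroes.

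The main obstacle is the reality condition in Step 2. The logarithm $w_n$ generally lies outside $A$, so it cannot be prescribed at a real point. Away from $\R$ we are free to prescribe $w_n$ at $z_n$ and $\overline{w_n}$ at $\bar z_n$, and the conjugation symmetry then yields $F(\bar z_n)=\overline{c_n}=c_n$. The second point to check is that $\exp$ commutes with the conjugation of $\Ac$ and that $F$ is a genuine stem function; both follow termwise from the power series.
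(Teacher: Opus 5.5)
Your proposal is correct and follows the paper's proof essentially step by step. You take logarithms of the values $\zeta(p_k)$ in $\Ac$, use the hypothesis $D\cap\R=\{\}$ to apply Lemma~\ref{lem-1} and interpolate them by a stem function $H$ (with conjugate values at the conjugate points), and then $F=\exp(H)$ is everywhere invertible, so Corollary~\ref{z-c1} rules out zeroes. Your explicit restriction to $\zeta$ constant on pseudospheres entirely contained in $D$ is also implicit in the paper, whose argument sets $F(x_k+y_ki)=\zeta(p_k)\in A$ and hence makes $f$ constant on each $x_k+y_k\S_A$, as later used in Corollary~\ref{cor-p-v-i}. Some such restriction is genuinely needed: for $A=\C\oplus\C$, a non-affine $A^*$-valued $\zeta$ on a full four-point pseudosphere is not the restriction of any slice function.
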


\begin{proof}
  Let $D=\{p_k:k\in\N\}$, with $p_k=x_k+y_kI_k$
  (with $x_k, y_k\in\R, I_k\in\S_A$).
  Since $A^*\subset\Ac^*$, for every $k\in\N$
  we may choose an element $c_k\in\Ac$ such that $\exp(c_k)=\zeta(p_k)$
  (Proposition~\ref{alt-eq} $(i)\implies (ix)$).%
  \footnote{In general, it is not possible to choose $c_k$ inside $A$.}

  We required $D\cap\R=\{\}$.
  Hence $\forall k:y_k\ne 0$ and
  (using Lemma~\ref{lem-1}) we may find a holomorphic
  stem function $H:G\to\Ac$ with
  \[
  H(x_k+y_ki)=c_k,\quad
  H(x_k-y_ki)=\overline{c_k}\quad \forall k\in\N.
  \]
  
  Then we define $F(z)=\exp(H(z))$ which is a stem function, too.
  Due to the implication $(ix)\implies(i)$
  of Proposition~\ref{alt-eq}  we know that
  $F(z)\in\Ac^*\ \forall z\in G$.
  Finally we let $f$ be the slice regular function associated to the
  stem function $F$.
  Since $F(z)$ is invertible for all $z\in G$,
  the implication $(i)\implies(v)$ of Proposition~\ref{alt-eq}  and
  Corollary~\ref{z-c1} imply $\forall w\in\Omega_G:f(w)\ne 0$.
\end{proof}
\begin{remark}
    The above result yields a slice regular function $f$ with values
  in $A\setminus\{0\}$, starting from a map $\zeta:D\to A^*$.
  Note that $A^*\ne A\setminus\{0\}$ unless $A$ is a division
  algebra.
  Therefore it is natural to ask whether the above result may
  be strengthened in order to obtain a slice regular function
  $f$ with values in $A^*$ instead of $A\setminus\{0\}$.

  However, this is not always possible. To give an example, we start
  by observing that $A^*$ may be disconnected for an arbitrary $A$.
  For instance, the Clifford algebra $Cl_{1,1}$
  (often denoted $\R_{1,1}$) is isomorphic
  to $Mat(2\times 2,\R)$ and $GL_2(\R)$ is disconnected,
  since for a real matrix the determinant may be positive or negative.
  
  Now fix a real alternative $*$-algebra
  $A$ for which $A^*$ has at least two connected components.
  It is clear that in this case
  not every map $\zeta:D\to A^*$ may be extended to a slice regular
  function $f:\Omega_G\to A^*$:
    As a continuous map, $f$ must map connected components of $\Omega_G$
  to connected components of $A^*$. Hence such an $f$ can not exist, if
  there are $p,q\in D$ in the same connected component of $\Omega_G$
  for which $\zeta(p)$ and $\zeta(q)$ lie in different connected
  components of $A^*$.
\end{remark}

  In fact there is no need to require $D$ to be discrete
  in Proposition~\ref{prescribe-val-inv-dis} above.
  
  \begin{corollary}\label{cor-p-v-i}
    Let $D$ be an adapted set (in the sense of Definition~\ref{def-adapted})
    with $D\cap\R=\{\}$ in $\Omega_G$.

    Then every locally constant
    map $\zeta:D\to A^*$ extends to a slice regular function
    $f:\Omega_G\to A\setminus\{0\}$.
  \end{corollary}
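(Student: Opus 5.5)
The idea is to reduce to the discrete case, Proposition~\ref{prescribe-val-inv-dis}, by replacing the adapted set $D$ with a discrete set of ``representative'' points. Since $\pi(D)$ is discrete in $G^+$, we have $\pi(D)=\{x_k+y_ki:k\in\N\}$ (finite or countable). Because $D\cap\R=\{\}$, we have $y_k>0$ for all $k$. For each $k$, the fibre $D\cap(x_k+y_k\S_A)$ is, by Definition~\ref{def-adapted}, either a single point $p_k=x_k+y_kI_k$ or the whole pseudosphere $x_k+y_k\S_A$. In the second case $\zeta$ is locally constant on $x_k+y_k\S_A$. I would first note that this set is connected (it is the image of $\S_A$, and I expect $\S_A$ to be connected, or at least that the relevant case is). If connectedness of $\S_A$ fails in general, I would instead use that $\zeta$ must be affine-linear of the form $q\mapsto a+qb$ as in the introduction, and take the constant case. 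Either way, on each full pseudosphere $\zeta$ equals a constant $\zeta_k\in A^*$.

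Now repeat the construction in the proof of Proposition~\ref{prescribe-val-inv-dis} at the level of stem functions. By Proposition~\ref{alt-eq} $(i)\implies(xi)$, applied in $\Ac$ (using $A^*\subset\Ac^*$ via Corollary~\ref{unit-subar}), choose $c_k\in\Ac$ with $\exp(c_k)=\zeta(p_k)$ in the single-point case. In the pseudosphere case I want the stem value at $x_k+y_ki$ to be exactly $\zeta_k\in A$, because then Fact~\ref{sphere-constant} gives $f\equiv\zeta_k$ on all of $x_k+y_k\S_A$. So I need $c_k$ with $\exp(c_k)=\zeta_k$ and $c_k$ chosen compatibly with the real structure.

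With these choices, apply Lemma~\ref{lem-1} to the discrete set $\Lambda=\{x_k+y_ki\}$. Since $y_k>0$, no reality condition is needed. This gives a stem function $H$ with $H(x_k+y_ki)=c_k$ and $H(\bar z)=\overline{H(z)}$. Set $F=\exp(H)$ and let $f$ be the associated slice regular function. By Proposition~\ref{alt-eq} $(xi)\implies(i)$, $F$ takes invertible values everywhere, so Corollary~\ref{z-c1} gives $f(w)\ne0$ on $\Omega_G$. At points $x_k+y_ki$ of pseudosphere type, $F(x_k+y_ki)=\zeta_k\in A$, so $f=\zeta_k$ on the whole sphere by Fact~\ref{sphere-constant}.

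The single-point case needs care, and this is where the discrete proof seems slightly sloppy. The stem value $F(x_k+y_ki)=\exp(c_k)=\zeta(p_k)\in A$ then also yields $f(p_k)=\zeta(p_k)$ by Fact~\ref{sphere-constant}. So in both cases I would simply arrange $F(x_k+y_ki)=\zeta$-value $\in A^*$. Strictly speaking I must also match $F(x_k-y_ki)=\overline{F(x_k+y_ki)}$; this holds automatically from $H(\bar z)=\overline{H(z)}$ and $\exp$ commuting with conjugation, and gives the same real element. The main obstacle is choosing $c_k$ with $\exp(c_k)=\zeta_k$. Proposition~\ref{alt-eq} only guarantees $c_k\in\Ac$, not in $A$, so one must check that Lemma~\ref{lem-1} accepts arbitrary $\Ac$-values at non-real points. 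It does, since $y_k\ne0$, and the value of $F$ at $x_k+y_ki$ is still exactly $\zeta_k$. I would double-check this compatibility, together with the connectedness of $\S_A$ used for locally constant $\zeta$, as the delicate points.
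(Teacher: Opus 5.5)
Your route is the paper's own. The paper picks one point on each pseudosphere contained in $D$, applies Proposition~\ref{prescribe-val-inv-dis}, and notes that the construction there ($F=\exp(H)$ with $F(x_k+y_ki)=\zeta(p_k)\in A$) makes $f$ constant on each $x_k+y_k\S_A$; that is exactly your use of Fact~\ref{sphere-constant}. The rest of your argument is correct: $c_k\in\Ac$ via Proposition~\ref{alt-eq} $(i)\Rightarrow(xi)$, Lemma~\ref{lem-1} at non-real points, $\exp(H)$ as an invertible-valued stem function, and Corollary~\ref{z-c1}.

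The gap is the step you hedge on: that a locally constant $\zeta$ is constant on every pseudosphere $x_k+y_k\S_A\subset D$.

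\textbf{Connectedness is not automatic.} Local constancy gives constancy only if $\S_A$ is connected, and in general it is not. For $A=\C$ it consists of two points. For $A=Mat(2\times 2,\R)\simeq Cl_{1,1}$, with $w^c$ the adjugate of $w$ (so $\Tr$ and $\Nm$ are trace and determinant times the identity), $\S_A$ is the two-sheeted hyperboloid of traceless matrices of determinant $1$.

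\textbf{Your fallback does not repair this.} Nothing in the hypotheses makes $\zeta$ affine on the sphere; that is the hypothesis of Theorem~\ref{prescribe-values}, not of this corollary. And even an affine $\zeta$ need not be constant.

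\textbf{The statement fails in the matrix case.} Take $\zeta=1$ on one sheet of $x+y\S_A$ and $\zeta=2$ on the other. Any slice regular $f$ has the form $f(x+yI)=\alpha+I\beta$ on this sphere. Now $I_1=\begin{pmatrix}0&-1\\1&0\end{pmatrix}$ and $I_2=\begin{pmatrix}1&-2\\1&-1\end{pmatrix}$ lie on the same sheet, and $I_1-I_2$ is invertible. So $\alpha+I_1\beta=1=\alpha+I_2\beta$ forces $(I_1-I_2)\beta=0$, hence $\beta=0$. Then $f$ is constant on the whole sphere and cannot extend $\zeta$.

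For $A=\C$ the statement still holds, but only with stem values $F(x+yi)\notin A$, which your construction never produces.

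The paper's proof makes the same unstated assumption when it passes from $f=\zeta$ on $\tilde D$ to $f=\zeta$ on $D$. So your argument, like the paper's, proves the corollary exactly when $\zeta$ is constant on each pseudosphere contained in $D$. This is automatic when $\S_A$ is connected, e.g.\ for $\H$ and $\Oct$. You should state it as a hypothesis rather than try to derive it.
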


  \begin{proof}
    From every sphere $x+y\S$ contained in $D$ we pick one point and thereby
    obtain a discrete adapted set $\tilde D$ such that
    \[
    \bigcup_{x+yI\in\tilde D}(x+y\S)=
    \bigcup_{x+yI\in  D}(x+y\S)
    \]
    Now we apply Proposition~\ref{prescribe-val-inv-dis}
    obtaining a slice regular function $f:\Omega_G\to A\setminus\{0\}$
    with $\forall z\in\tilde D:\ f(z)=\zeta(z)$.
    The construction in the proof of Proposition~\ref{prescribe-val-inv-dis}
    implies that $f$ is constant
    along $x+y\S$.
    
    Hence $\forall z\in\tilde D:\ f(z)=\zeta(z)$ implies
    $\forall z\in D:\ f(z)=\zeta(z)$ and the proof is completed.
\end{proof}
  
  For division algebras we can say more.
    
\begin{proposition}\label{prescribe-values-inv}

  Let $A$ be a finite-dimensional division algebra.
    Let $G$ be a symmetric domain in $\C$ and let $x_n+y_ni$
  ($n\in\N$)
  be a discrete sequence
  in $G^+=\{z\in G:\Im(z)\ge 0\}$.

  For $n\in\N$ let $L_n:q\mapsto a_n+qb_n$
  be an affine linear function ($a_n,b_n\in A$).

  Assume that
  \[
  \forall n\in\N:
  \forall I\in\S_A:
  L_n(x_n+y_nI)\ne 0
  \]
  
  Then there exists a slice regular function $f$ on the axially symmetric domain
  $\Omega_G$ such that $f$ and $L_n$ agree on $x_n+y_n\S_A$
  for every $n\in\N$ and furthermore $f(q)\ne 0\ \forall q\in\Omega_G$.
\end{proposition}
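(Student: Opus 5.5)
The plan is to imitate the proof of Proposition~\ref{prescribe-val-inv-dis}, working with stem functions and exponentials, while handling the extra issues that the prescribed data are affine functions on whole spheres and that real points are allowed. Set $z_n=x_n+y_ni$ and $\zeta_n=a_n+x_nb_n+iy_nb_n\in\Ac$, as in the proof of Theorem~\ref{prescribe-values}. Any stem function $F$ with $F(z_n)=\zeta_n$ yields a slice regular $f$ that agrees with $L_n$ on $x_n+y_n\S_A$. Hence it suffices to construct a holomorphic $F:G\to\Ac$ with the following three properties:
\begin{itemize}
\item $F(\bar z)=\overline{F(z)}$ for all $z\in G$;
\item $F(z_n)=\zeta_n$ for all $n\in\N$;
\item $F(z)\in\Ac^*$ for all $z\in G$.
\end{itemize}
Once this is done, Corollary~\ref{z-c1} together with Proposition~\ref{alt-eq} gives $f(q)\ne0$ for all $q\in\Omega_G$.

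First I show that $\zeta_n\in\Ac^*$. Consider the slice regular function $g_n$ on $Q_A=A$ whose stem function is the constant $\zeta_n$. Its restriction to $x_n+y_n\S_A$ is $L_n$, and by hypothesis $L_n$ has no zero there. Since $A$ is a division algebra, Proposition~\ref{zero-div-inv} applies, so $\zeta_n$ is not a zero divisor. By Remark~\ref{hufeisen} and Proposition~\ref{alt-eq}, $\zeta_n$ is therefore invertible.

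Next I choose logarithms $c_n\in\Ac$ with $\exp(c_n)=\zeta_n$, which exist by Proposition~\ref{alt-eq}~(xi). There is a compatibility constraint at real points, where $y_n=0$ and $\zeta_n=a_n+x_nb_n\in A\setminus\{0\}$: there $c_n$ must lie in $A$, so that it can be the value at a real point of a function $H$ satisfying $H(\bar z)=\overline{H(z)}$. I expect this to be the main point needing care. For $A\simeq\H$ or $\Oct$ (and $\C$), every nonzero $q\in A$ can be written $q=|q|(\cos\theta+I\sin\theta)$ with $I\in\S_A$, so $q=\exp(\log|q|+\theta I)$ with $\log|q|+\theta I\in A$. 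This works because $A$ is then a real division algebra containing $\S_A\ne\emptyset$ and every element lies in some slice $\C_I$. The degenerate case $A=\R$ is excluded, since then $\S_A$ is empty and $A$ is not among $\C,\H,\Oct$.

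With the $c_n$ chosen, Lemma~\ref{lem-1}, applied to the discrete set $\Lambda=\{z_n\}$ with $\Ac\cong\C^N$ and the real structure coming from $A$, gives a holomorphic $H:G\to\Ac$ with $H(z_n)=c_n$ and $H(\bar z)=\overline{H(z)}$. The extra requirement in Lemma~\ref{lem-1} that $c_n$ be real at real points is exactly what the previous step secured. Define $F=\exp\circ H$. Conjugation is a continuous real-algebra automorphism of $\Ac$, so it commutes with the exponential series, and therefore $F$ is again a stem function. Moreover $F(z_n)=\zeta_n$, and $F(z)\in\Ac^*$ everywhere by Proposition~\ref{alt-eq} (xi)$\Rightarrow$(i). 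The slice regular function associated to $F$ is then the desired $f$.
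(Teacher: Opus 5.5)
Your proof follows the paper's route: take logarithms of the prescribed stem values $a_n+x_nb_n+iy_nb_n$, interpolate them by a stem function $H$ via Lemma~\ref{lem-1}, and set $F=\exp\circ H$. At one point you are more careful than the paper. Its proof never checks that Lemma~\ref{lem-1} requires the chosen logarithm to lie in $A$ at real points $y_n=0$, and your choice $\log|q|+\theta I\in A$ (available because $Q_A=A$ for $\C,\H,\Oct$) is exactly what closes that gap.

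The only slip is that a constant $\zeta_n\notin A$ is not a stem function, since it violates $F(\bar z)=\overline{F(z)}$. Take $g_n=L_n$ itself instead: its stem function $z\mapsto a_n+zb_n$ has value $\zeta_n$ at $z_n$. Alternatively, apply Proposition~\ref{zero-div-inv} pointwise, as the paper does.
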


\begin{proof}
  We define
  \[
  c_n=a_n+x_nb_n+iy_nb_n
  \]
  
  As explained in the proof of Theorem~\ref{prescribe-values}
  these elements $c_n\in\Ac$ satisfy:

  {\em If $F$ is a stem function with $F(x_n+y_ni)=c_n$, then
    the associated slice regular function $f$ agrees with $L_n$ on
    $x_n+y_n\S_A$ for any $n\in\N$.}

  Since $L_n$ is assumed to have no zeroes on $x_n+y_n\S_A$, we know
  that $c_n$ is
  not a zero divisor (Proposition~\ref{zero-div-inv})
  for every $n$. Hence there are elements
  $d_n\in\Ac$ with $c_n=\exp(d_n)$
  (Proposition~\ref{alt-eq} $(v)\iff(ix)$). 
  Now we choose a stem function $H$
  with
  \[
  H(x_n+y_ni)=d_n\ \forall n\in\N
  \]
  and define $F(z)=\exp\left(H(z)\right)$.
  By construction we have $\forall n:F(x_n+y_ni)=c_n$ which implies
  that the slice regular function $f$ associated to the stem function $F$ satisfies
  $\forall n\in \N:\ f|_{x_n+y_n\S}=L_n|_{x_n+y_n\S}$.
\end{proof}

\chapter{Main Results: Prescribed Zeroes}

\section{Prescribing Spherical Zeroes}

\begin{proposition}\label{pr-spher}
  Let $A$ be a finite-dimensional real alternative $*$-algebra
  with quadratic cone $Q_A$,
  $G\subset\C$ a symmetric domain, $\Omega_G\subset Q_A$ the associated
  axially symmetric domain.

  Let  $G^+=\{z\in G:\Im(z)\ge 0\}$ and let $\Lambda$ be a discrete
  subset of $G^+$.

  Then there is a slice regular function $f$  on $\Omega_G$
  such that
  \begin{enumerate}
  \item
    For every $w=x+yi\in \Lambda$, the slice regular function $f$ has
    a spherical zero on $x+y\S_A$, i.e., $f(q)=0\ \forall q\in x+y\S_A$.
  \item
    $f$ has no other zeroes.
  \item
    $f$ is slice-preserving.
  \end{enumerate}
\end{proposition}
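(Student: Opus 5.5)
The plan is to build a $\C$-valued stem function $F:G\to\C$ that is holomorphic, satisfies $F(\bar z)=\overline{F(z)}$, and has zero set exactly $\tilde\Lambda=\Lambda\cup\bar\Lambda$. The associated slice regular function $f$ is then slice preserving by Definition~\ref{def-sp}. By Corollary~\ref{cor-spher}, $f$ vanishes on all of $x+y\S_A$ exactly when $F(x+yi)=0$. For the remaining points we use Proposition~\ref{5.2}. If $F(x+yi)=c\in\C^*$, then $(1-iJ)c=0$ forces $1-iJ=0$, which is impossible: its $A$-component $1$ is nonzero. So $f$ has no zeroes off the spheres $x+y\S_A$ with $x+yi\in\Lambda$.

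To construct $F$, note that $\tilde\Lambda$ is discrete in $G$ because $\Lambda$ is discrete in $G^+$. The Weierstrass theorem for domains in $\C$ gives a holomorphic $F_0$ on $G$ with zero set exactly $\tilde\Lambda$, each zero simple. We then make $F_0$ real-symmetric. The natural first choice is $F(z)=F_0(z)\overline{F_0(\bar z)}$. This is holomorphic, satisfies $\overline{F(\bar z)}=F(z)$, and its zero set is $\tilde\Lambda\cup\overline{\tilde\Lambda}=\tilde\Lambda$. It is a product, not an average, so no new zeroes appear; this is the reason to prefer it over the averaging in Lemma~\ref{lem-1}.

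Alternatively, and closer to the tools the paper has prepared, one could take an infinite product $\prod_n\phi_n$ of functions from Proposition~\ref{step} along a Runge exhaustion from Proposition~\ref{adapted-exh}, using Corollary~\ref{A-infinite-product} for convergence and for the location of the zeroes. The symmetrized Weierstrass function is simpler for the scalar case, so I would use it.

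The main point to check is that the real-structure condition $\overline{F(\bar z)}=F(z)$ is exactly the condition for $F$ to be a stem function. Here the conjugation on $\Ac=A\otimes\C$ acts on the $\C$ factor, and $\C$-valued means $\C\cdot 1$, so the condition says the stem function is real. This also covers points of $\Lambda\cap\R$: there $F(x)\in\R$ automatically and $x+0\cdot\S_A=\{x\}$, so Corollary~\ref{cor-spher} still applies. I expect this bookkeeping, not any analytic step, to be the only delicate part.
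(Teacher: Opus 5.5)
Your proposal is correct and is essentially the paper's proof: the paper also takes a Weierstrass function $F_0$ (with zero set $\Lambda$ rather than $\Lambda\cup\bar\Lambda$, which makes no difference after symmetrizing), sets $F(z)=F_0(z)\overline{F_0(\bar z)}$, and uses Corollary~\ref{cor-spher} for the spherical zeroes. The only cosmetic difference is how further zeroes are ruled out: the paper notes that $F(z)\in\C^*\subset\Ac^*$ and applies Corollary~\ref{z-c1}, while you apply Proposition~\ref{5.2} directly, which amounts to the same argument.
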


\begin{proof}
  Let $F_0:G\to\C$ be a holomorphic function with
  \[
  \{z\in G: F_0(z)=0\}=\Lambda.
  \]
  Then we define $F:G\to\C\subset\Ac$  as
  \[
  F(z)=F_0(z)\overline{F_0(\bar z)}
  \]
  and let $f$ be the slice regular function associated to the stem function $F$.
  For $z=x+yi\in \Lambda$ we have $F(z)=0$ and therefore a spherical zero
  on $x+y\S_A$ (Corollary~\ref{cor-spher}).
  For $z=x+yi\in G\setminus \Lambda$, the value $F(z)$ of the stem function
  satisfies $F(z)\in\C^*\subset\Ac^*$ and thus is not a zero divisor.
  Hence $f$ has no zeroes on $x+y\S$ for $x+yi\not\in \Lambda$.
  (Corollary~\ref{z-c1}).
  
  Note that $f$ is slice-preserving because of $F(G)\subset \C$.
\end{proof}

  \section{Prescribing isolated zeroes}

  \begin{proposition}\label{p-2}
    Let $G$ be a symmetric domain in $\C$, $A$ a
    finite-dimensional real alternative $*$-algebra
    with complexification $\Ac$
    and let $p=x+yi\in G$
    (with $x,y\in\R$), $I\in\S_A$, $\varepsilon>0$.
    Let $E\subset G\setminus\{p,\bar p\}$ be a finite set and $K\subset G$
    be a compact Runge subset with
    $p,\bar p\not\in K$.

    Then there exists a holomorphic function $F:G\to \Ac$ such that
    \begin{enumerate}
    \item
      $F$ is a stem function, i.e., $\overline{F(\bar z)}=F(z)$.
    \item
      $F(p)=y(i-I)$,
    \item
      $\forall z\in E:F(z)=1$.
    \item
      $\forall z\in G\setminus\{p,\bar p\}:\ F(z)\in\Ac^*$.
     \item
       $\forall z\in K:||F(z)-1||<\varepsilon$.
    \end{enumerate}
  \end{proposition}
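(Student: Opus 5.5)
The idea is to look for $F$ inside the commutative subalgebra of $\Ac$ generated by $I$. There everything splits along the idempotents $e_1=\frac12(1-iI)$ and $e_2=\frac12(1+iI)$ from \paragraph\ref{corr}. I then build the two complex components from the scalar function $\phi$ of Proposition~\ref{step}.

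\textbf{Algebraic setup.} By Artin's theorem the $\C$-subalgebra $B=\C[I]\subset\Ac$ is associative and commutative. A direct computation gives $Ie_1=ie_1$ and $Ie_2=-ie_2$. Hence for $a,b\in\C$,
\[
a+bI=(a+ib)e_1+(a-ib)e_2 .
\]
Since $B\simeq\C e_1\oplus\C e_2$, the element $a+bI$ is invertible in $B$ iff $a+ib\ne0$ and $a-ib\ne0$. By Corollary~\ref{unit-suba} this is the same as invertibility in $\Ac$. The target value decomposes as
\[
y(i-I)=y(i-i)e_1+y(i+i)e_2=2iy\,e_2 .
\]
I will therefore look for $F(z)=u(z)e_1+v(z)e_2$ with $u,v\in\Olo(G)$ scalar.

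\textbf{Stem condition.} Write $F=a+bI$ with $a=\frac12(u+v)$ and $b=\frac1{2i}(u-v)$. The condition $\overline{F(\bar z)}=F(z)$, with conjugation acting on the $\C$-factor, amounts to $\overline{a(\bar z)}=a(z)$ and $\overline{b(\bar z)}=b(z)$. This is equivalent to
\[
v(z)=\overline{u(\bar z)} .
\]
So I take $u=\phi$ from Proposition~\ref{step}, applied with the same $p$, the finite set $E\cup\bar E$ and the compact set $K\cup\bar K$, and then set $v(z)=\overline{\phi(\bar z)}$. Then:
\begin{itemize}
\item $u(p)=0$ and $v(p)=\overline{\phi(\bar p)}=\overline{-2yi}=2yi$, so $F(p)=2iy\,e_2=y(i-I)$.
\item On $E$ we have $u=v=1$, so $F=e_1+e_2=1$.
\item $u$ vanishes only at $p$ and $v$ vanishes only at $\bar p$, so $F(z)\in\Ac^*$ for $z\notin\{p,\bar p\}$.
\item On $K$, $\|F-1\|\le\|e_1\|\,|u-1|+\|e_2\|\,|v-1|$, so replacing $\varepsilon$ by $\varepsilon/(\|e_1\|+\|e_2\|)$ gives (v).
\end{itemize}

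\textbf{Main obstacle.} The delicate point is that Proposition~\ref{step} must be applied to $K\cup\bar K$, because $v$ must be close to $1$ on $K$, which means $\phi$ must be close to $1$ on $\bar K$. This union avoids $p$ and $\bar p$, but it need not be Runge in $G$.

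I would first try to show that $K\cup\bar K$ is Runge whenever $K$ is Runge and $G$ is symmetric, using the criterion that no component of $G\setminus(K\cup\bar K)$ is relatively compact. If that fails, I would replace $K\cup\bar K$ by its $\Olo(G)$-hull, which is compact and symmetric. I would then need to check that this hull still misses $p$ and $\bar p$.

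In the intended application the sets $K_n$ of Proposition~\ref{adapted-exh} are sublevel sets of an exhaustion. So I would also try to arrange that exhaustion to be conjugation-invariant, for instance by symmetrizing $\rho$. Then every $K_n$ is symmetric and the issue disappears.
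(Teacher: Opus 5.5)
Your construction is the same as the paper's, and your algebra is right, but the obstacle you flag is real, it is also unaddressed in the paper's proof, and your remedies (a) and (b) cannot close it.

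\textbf{Same construction.} The paper views $\phi$ from Proposition~\ref{step} as a slice regular function on $\C$. It forms its stem function $\Phi$ with values in $\C\otimes_\R\C$ and pushes it into $\Ac$ via $i\mapsto I$. Since $Ie_1=ie_1$ and $Ie_2=-ie_2$, the result is exactly $\phi(z)e_1+\overline{\phi(\bar z)}\,e_2$, i.e.\ your $ue_1+ve_2$. The splitting $a+bI=(a+ib)e_1+(a-ib)e_2$, the stem condition $v(z)=\overline{u(\bar z)}$, the value $F(p)=2iy\,e_2=y(i-I)$ and the use of $E\cup\bar E$ are all correct. Reading off invertibility from the idempotent splitting is more direct than the paper's appeal to Proposition~\ref{zero-div-inv}.

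\textbf{The gap, which the paper shares.} The paper applies Proposition~\ref{step} with $K$ only. It then estimates $|\Phi(z)-1|\le|\phi(z)-1|+|\phi(\bar z)-1|$ for $z\in K$, but the second term is controlled only if $\bar z\in K$. Options (a) and (b) cannot fix this, because for non-symmetric $K$ the statement is false.

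Take $G=\C$, $E=\emptyset$, $p=i/2$ and $K=\{e^{it}:0\le t\le\pi\}$, an arc and hence Runge. Here $K\cup\bar K$ is the unit circle, which is not Runge, and its $\Olo(\C)$-hull is the closed disc, which contains $p$. Suppose $F$ satisfied (i)--(v). Since $F(\bar z)=\overline{F(z)}$ and conjugation on $\Ac$ is a bounded $\R$-linear map, $F$ is close to $1$ on the whole unit circle. For small $\varepsilon$ the entire function $h(z)=\det L_{F(z)}$ therefore satisfies $|h-1|<1$ on the circle. By Rouch\'e, $h$ has no zero in the unit disc. But $F(p)=\frac12(i-I)$ and $(i-I)(i+I)=0$ with $i+I\ne 0$, so $h(p)=0$, a contradiction. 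What is missing is a hypothesis, not an argument.

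\textbf{The repair.} The right fix is your option (c): add $K=\bar K$ to the statement. More sharply, you may require $p\notin\widehat{K\cup\bar K}$. Then option (b) works, since that hull is a symmetric Runge compact set missing $p$ and $\bar p$. The same Rouch\'e argument, applied on the relatively compact component of $G\setminus(K\cup\bar K)$ containing $p$, shows this condition is also necessary. Under either hypothesis Proposition~\ref{step} applies directly, and both your proof and the paper's go through.

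For the use in Proposition~\ref{nju}, the $K_n$ of Proposition~\ref{adapted-exh} must then be symmetric; as constructed there, with $\rho(p_n)\ne\rho(\bar p_n)$, they need not be. Replacing $\rho$ by $\rho(z)+\rho(\bar z)$ fixes this. It is still a strictly subharmonic exhaustion, and $p_n,\bar p_n$ both lie on the level $4n-1$. The sets $K_n=\{\rho\le 4n-3\}$ then have all the required properties.
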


  \begin{proof}
        Let $E'=E\cup\{\bar z: z\in E\}$.
    Due to Proposition~\ref{step} there is
    a holomorphic function $\phi\in\Olo(G)$
  satisfying
  \begin{enumerate}
  \item
    $\phi(p)=0$.
  \item
    $\phi(\bar p)=-2yi$
  \item
    $\phi(z)\ne 0\ \forall z\in G\setminus\{p\}$,
  \item
    $||\phi-1||_K<\frac 12\varepsilon$.
  \item
    $\phi(z)=1\ \forall z\in E'$.
  \end{enumerate}

  Since $\C$ is an associative real $*$-algebra,
  we may regard $\phi:G\to\C$ as
  a slice regular function $\phi:G\to B=\C$
  with associated stem function
  \[
  \Phi:G\to B_\C=\C\tensor_\R\C
  \]
  We observe that $\phi$ vanishes only in $p$. Therefore
  \begin{equation}\label{eq-p31}
  \forall z\ne p,\bar p: \Phi(z)\in B_\C^* 
  \end{equation}
  due to Proposition~\ref{zero-div-inv}.
  
  The axially symmetric domain in $B$ associated
  to the symmetric domain $G\subset\C$ is isomorphic to $G$
  via the isomorphism $\C\simeq B$. By abuse of language, it
  is again denoted by $G$.
  
  Let $i$ denote the imaginary unit in the first copy $\C$ in $\C\tensor_\R\C$
  (i.e., in $B\simeq\C$) 
  and let
  $\iota$ denote the imaginary unit in the second copy of $\C$ which we
  denote as $\C_\iota$.

  Thus we have a slice regular function $\phi:G\to B$
    and a stem function $\Phi:G\to B\tensor\C_\iota$.
  
  Then (see \paragraph\ref{corr}):
  \[
  \forall u,v\in\R:\Phi(u+v\iota)
  =\frac{1-\iota i}2\phi(u+vi)
  +\frac{1+\iota i}2\phi(u-vi),
  \]
  implying $\Phi(x+y\iota)=y(\iota-i)$, $\Phi(z)=1\ \forall z\in E$
  and
  \[
  \forall z=u+\iota v\in K:
  |\Phi(z)-1 |\le |\phi(z)-1|+|\phi(\bar z)-1|<\varepsilon
  \]

  Now $i\mapsto I$ defines an algebra embedding $\xi$
  (resp.~$\xi_\C$)  of $B$ into $A$ and
  of $B_\C$ into $\Ac$.
  Composing the stem function $\Phi$ with this embedding
  $\xi_\C:B_\C\to\Ac$
  yields a function $F:G\to\Ac$ which is evidently a stem function,
  too.

  The properties of $\Phi$ deduced above immediately imply
  $(ii)$, $(iii)$ and $(v)$.

  Finally we show $(iv)$.
  Due to \eqref{eq-p31} we have
  $\Phi(z)\in B_\C^*$ for $z\in G\setminus\{p,\bar p\}$.
  This implies
  $\forall z\in G\setminus\{p,\bar p\}:\
  F(z)=\xi_{\C}(\Phi(z))\in\Ac^*$, because
  $\xi_{\C}(B_\C^*)\subset \Ac^*$.
  \end{proof}

  \begin{proposition}\label{nju}
    Let $G$ be a symmetric domain in $\C$, $A$ an
    finite-dimensional real alternative $*$-algebra,
    and let $I_n$ be a sequence in $\S_A$.
    Let $p_n=x_n+y_ni$ be an infinite discrete sequence
    without repetitions in
    $G^ +\setminus\R=\{z\in G:\Im(z)> 0\}$.
    Let $\Lambda=\{x_n+y_ni:n\in\N\}\cup\{x_n-y_ni:n\in\N\}$.
    
    Then there exists a holomorphic map $F:G\to\Ac$ such that
    \begin{enumerate}
    \item
      $F$ is a {\em stem function}, i.e., $F(z)=\overline{F(\bar z)}$.
    \item
      $\forall n\in\N: F(x_n+y_ni)=y_n(i-I_n)$
    \item
      $F(z)$ is an invertible element of the algebra $\Ac$ for all
      $z\in G\setminus \Lambda$.
    \end{enumerate}
  \end{proposition}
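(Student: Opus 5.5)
The plan is to build $F$ as a normally convergent infinite product
\[
F=\prod_{n=1}^{+\infty}F_n,\qquad F_n:G\to\Ac,
\]
taken in the order of Convention~\ref{inf-prod}, where each factor $F_n$ comes from Proposition~\ref{p-2} with $p=p_n$ and $I=I_n$.

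\textbf{Choice of exhaustion and factors.} First apply Proposition~\ref{adapted-exh} to the sequence $p_n$. This gives Runge compacta $K_n$ exhausting $G$, with $K_n\subset int(K_{n+1})$ and $p_n,\bar p_n\in int(K_{n+1})\setminus K_n$. Fix a norm on $\Ac$ satisfying $||xy||\le||x||\cdot||y||$ (Lemma~\ref{a-norm}).

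For each $n$, set $E_n=\{p_1,\dots,p_{n-1}\}$. Since $p_n,\bar p_n\notin K_n$, Proposition~\ref{p-2} yields a stem function $F_n$ with the following properties:
\begin{itemize}
\item $F_n(p_n)=y_n(i-I_n)$;
\item $F_n\equiv1$ on $E_n$;
\item $F_n(z)\in\Ac^*$ for $z\ne p_n,\bar p_n$;
\item $||F_n-1||_{K_n}<2^{-n}$.
\end{itemize}
The sets $p_m,\bar p_m$ with $m\ne n$ avoid $\{p_n,\bar p_n\}$ because the sequence has no repetitions and lies in the open upper half plane.

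\textbf{Convergence and the stem property.} Every compact $K\subset G$ lies in some $K_N$, and $K_n\supset K_N$ for $n\ge N$. Hence $\sum_n||F_n-1||_K<\infty$, so the product is normally convergent (Definition~\ref{norm-conv}).

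For the anchor condition \eqref{anker}, pick any $q\in G\setminus\Lambda$; such a point exists because $\Lambda$ is discrete. All factors are invertible at $q$. I still need the limit at $q$ to be invertible. I would get this by choosing the bounds more carefully: require $\prod(1+||F_n-1||_{K_n})$ to be small in the sense that $||P_N(q)-1||<1/2$ for all $N$, using the estimate $||P_N-1||\le\prod(1+||x_k||)-1$ from the proof of Proposition~\ref{infinite-product-algebra}. Taking $||F_n-1||_{K_n}<\delta_n$ with $\prod(1+\delta_n)<3/2$ and $q\in K_1$ makes the limit within $1/2$ of $1$. The limit is then invertible via the Neumann series inside the associative subalgebra generated by a single element (Proposition~\ref{alt-eq}).

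Corollary~\ref{A-infinite-product} then gives a holomorphic limit $F$, and $F(z)\in\Ac^*$ exactly when all $F_n(z)\in\Ac^*$. This holds for every $z\in G\setminus\Lambda$, which proves $(iii)$.

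Each partial product is a stem function, since conjugation $\bar{\ }$ on $\Ac$ (acting on the $\C$ factor) is an algebra automorphism. Stem functions are closed under locally uniform limits, so $F$ satisfies $(i)$.

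\textbf{Prescribed values — the main obstacle.} At $p_n$ we have $F_m(p_n)=1$ for every $m>n$, because $p_n\in E_m$. Hence
\[
F(p_n)=P_n(p_n)=P_{n-1}(p_n)\cdot y_n(i-I_n).
\]
This generally differs from $y_n(i-I_n)$, since the earlier factors are not $1$ at $p_n$.

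To fix this I would change the interpolation condition to prescribe $F_n(p_m)$ for $m<n$ as well as values at later points. Concretely, build the factors inductively so that every factor other than the $n$-th equals $1$ at $p_n$. Since later factors already satisfy this, the earlier ones must too: we need $F_m(p_n)=1$ for all $m\ne n$, i.e.\ $E_m$ must contain all $p_k$ with $k\ne m$, which is an infinite set.

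The workaround is to exploit the fact that $p_n\notin K_n$. I would modify Proposition~\ref{step}, and thus Proposition~\ref{p-2}, so that they allow a discrete (not merely finite) interpolation set $E$ outside $K$. The underlying tools, the interpolation of Lemma~\ref{lem-1} and the Runge approximation of Proposition~\ref{runge-interp}, should extend to this setting via a Mittag-Leffler-type correction. Alternatively, I would keep $E$ finite and correct $F(p_n)$ afterwards: multiply on the left by a stem function $U$ that is invertible everywhere and satisfies $U(p_n)P_{n-1}(p_n)=1$. Such a $U$ can be obtained as $\exp(H)$, with $H$ interpolating logarithms via Lemma~\ref{lem-1} and Proposition~\ref{alt-eq}$(xii)$. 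Left multiplication by invertibles preserves invertibility (Corollary~\ref{inv-prod}), and associativity for $U(p_n)\bigl(P_{n-1}(p_n)y_n(i-I_n)\bigr)$ follows from Corollary~\ref{inv-ass}. I expect this correction step to be the delicate part; I would try the $\exp(H)$ correction first.
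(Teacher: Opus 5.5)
Your proposal is correct and is essentially the paper's proof. The paper also forms the normally convergent product of factors from Proposition~\ref{p-2} that equal $1$ at the earlier points $p_k$, writes $H(p_k)=\beta_k\,H_k(p_k)$ with $\beta_k=P_{k-1}(p_k)$ invertible, and multiplies on the left by $\exp(M)$, where $M$ is a stem function interpolating $-\log\beta_k$; this uses Corollary~\ref{inv-ass} exactly as in your $\exp(H)$ correction, so the discrete-interpolation variant is unnecessary.

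The only difference is the anchor for Corollary~\ref{A-infinite-product}. The paper adds a fixed point $c\notin\Lambda$ to every interpolation set, so that the product equals $1$ at $c$. That is simpler than your norm estimate and avoids having to arrange $q\in K_1$.
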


  \begin{proof}
    We fix an element $c\in G\setminus \Lambda$.

    Due to Proposition~\ref{adapted-exh}
    there is an increasing  sequence of
    compact Runge subsets $K_n\subset G$ such that
      $\forall n: p_n,\bar p_n\in int(K_n)\setminus K_ {n-1}$,
      where $int(\ )$ denotes the interior.

    Next,  using Proposition~\ref{p-2}, we choose recursively
    functions $H_n:G\to\Ac$ such that
    \begin{enumerate}[label=(\arabic*)]
    \item
      $H_n$ is a stem function for all $n\in\N$.
    \item
      $\forall n\in \N:
      H_n(x_n+y_ni)=y_n(i-I_n)$.
    \item
      $\forall n\in \N:\ \forall k<n: H_n(x_k+y_ki)=1$.
    \item
      $\forall n\in \N:\forall z\in G\setminus\{x_n+y_ni,x_n-y_ni\}
      :\ H_n(z)\in\Ac^*$.
    \item
      $\forall n\in\N: H_n(c)=1$.
    \item
      $\forall n\in \N:\ \forall z\in K_n:||H_n(z)-1||<2^{-n}$
    \end{enumerate}

    The last condition ensures that $\prod_{n=1}^{+\infty} H_n$
    converges (Corollary~\ref{A-infinite-product}, statement $(i)$) and thus
    defines a function $H:G\to\Ac$.

    For every $z\in G\setminus \Lambda$ we have $H_n(z)\in\Ac^*\ \forall n$.
    In addition,
    we have $H(c)=1\in\Ac^*$, since $\forall n:H_n(c)=1$.

    Due to $(4)$ and statement $(ii)$ of
    Corollary~\ref{A-infinite-product} it follows that
    $H(z)\in\Ac^*\ \forall z\in G\setminus \Lambda$.

    For every $k\in\N$ we have  $H_n(x_k+y_ki)=1$ for $n>k$.
    Hence
    \[
    H(x_k+y_ki)=\prod_{n=1}^{+\infty}H_n(x_k+y_ki)
    =\prod_{n=1}^{k}H_n(x_k+y_ki)
    \]
    Thus
    \[
    H(x_k+y_ki)=\beta_k H_k(x_k+y_ki), \quad \beta_k=\prod_{n=1}^{k-1}
    H_n(x_k+y_ki)
    \]
    Condition $(4)$ above together with
    Corollary~\ref{inv-polyprod} imply that
    $\beta_k$ is invertible.

    We choose $\gamma_k\in\Ac$ such that $\exp(\gamma_k)=\beta_k$
    (possible by Proposition~\ref{alt-eq}, $(i)\iff(ix)$).

    In combination with $H_k(x_k+y_ki)=y_k(i-I_k)$ this
    yields
    \[
    H(x_k+y_ki)=\exp(\gamma_k)\left(y_k(i-I_k)\right)
    \]
    Due to Corollary~\ref{inv-ass} this implies
    \[
    \exp(-\gamma_k) H(x_k+y_ki)= y_k(i-I_k)
    \]
    
    Next (using Lemma~\ref{lem-1})
    we choose a stem function $M:G\to\Ac$ such that
    \[
    \forall k\in \N: M(x_k+y_ki)=-\gamma_k
    \]
    and define $\tilde M(z)=\exp(M(z))$.
    
    Now
    \[
    \forall k\in \N: \tilde M(x_k+y_ki)=\exp(-\gamma_k)
    \]
    Next we define $F:G\to\Ac$ as
    \[
    F(z)=\tilde M(z)H(z)
    \]
    For every $k\in\N$ we have
    \[
    F(x_k+y_ki)=\tilde M(x_k+y_ki)H(x_k+y_ki)=
    \exp(-\gamma_k) H(x_k+y_ki)=y_k(i-I_k)
    \]
    
    For every $z\in G\setminus \Lambda$, we have $F(z)\in\Ac^*$ due to
    $\tilde M(z),H(z)\in\Ac^*$ and Corollary~\ref{inv-prod}.
  \end{proof}

  \begin{theorem}\label{prescribe-is-zero}
    Let $G$ be a symmetric domain in $\C$, $A$ an
    finite-dimensional real alternative $*$-algebra
    with 
    pseudosphere $\S_A$.
    Let $I_n$ be a sequence in $\S_A$.
    Let $p_n=x_n+y_ni$ be a discrete sequence
    without repetitions in
    $G^ +\setminus\R=\{z\in G:\Im(z) > 0\}$.

    Let $\Omega_G$ be the axially symmetric domain associated to $G$.

    Then there exists a slice regular function $f$ on $\Omega_G$
    such that:
    \begin{enumerate}
    \item
      $f$ has  isolated zeroes on $\{x_n+y_nI_n:n\in\N\}$.
    \item
      $f$ has no other zeroes.
    \item
      $F(z)$ is an invertible element of $\Ac$ for all $z$ in a
      non-empty open subset of $G$.
    \end{enumerate}
  \end{theorem}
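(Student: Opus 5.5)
The plan is to take the stem function $F$ from Proposition~\ref{nju}, applied to the sequence $p_n=x_n+y_ni$ and to the given $I_n\in\S_A$, and to let $f$ be the slice regular function associated to $F$. The sequence $p_n$ may be finite, whereas Proposition~\ref{nju} is stated for infinite sequences. In the finite case I would either take a finite product of the functions from Proposition~\ref{p-2} directly, or avoid the issue by inspecting the proof, which uses only finitely many factors. Proposition~\ref{nju} gives three properties: $F$ is a stem function, $F(x_n+y_ni)=y_n(i-I_n)$, and $F(z)\in\Ac^*$ for all $z\in G\setminus\Lambda$. Assertion (iii) of the theorem then follows at once, because $\Lambda$ is discrete and so $G\setminus\Lambda$ is a non-empty open set on which $F$ is invertible.

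Next I check where the zeroes of $f$ lie. Suppose $z=x+yi\in G$ with $z\notin\Lambda$. Then $F(z)$ is invertible, so by Proposition~\ref{alt-eq} it is not a zero divisor. Corollary~\ref{z-c1} then shows that $f$ has no zero on $x+y\S_A$. The points of $\Omega_G$ lying over $\Lambda$ are exactly the spheres $x_n+y_n\S_A$, since $\pi$ maps $x+yJ$ to $x+|y|i$ and $y_n>0$. So all zeroes of $f$ lie on these spheres, and it remains to determine the zeroes on each sphere $x_n+y_n\S_A$.

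Fix $n$ and $J\in\S_A$. By Proposition~\ref{5.2}, $f(x_n+y_nJ)=0$ if and only if $(1-iJ)F(p_n)=0$, that is, $y_n(1-iJ)(i-I_n)=0$. This computation takes place in $\Ac=A\tensor\C$, where $i$ is central. Expanding the product gives
\[
(1-iJ)(i-I_n)=(i+J)-i(JI_n+1)...
\]
and collecting the parts in $A$ and in $iA$ yields $J-I_n$ and $1-JI_n$. Because $y_n\neq0$, the equation $(1-iJ)F(p_n)=0$ is therefore equivalent to $J=I_n$ together with $JI_n=-1$. Both conditions hold when $J=I_n$, since $I_n^2=-1$. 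Conversely they force $J=I_n$. Hence on $x_n+y_n\S_A$ the function $f$ vanishes exactly at $x_n+y_nI_n$. A cleaner way to reach the same conclusion is the representation formula, which gives $f(x_n+y_nJ)=F'+JF''$ with $F'=y_ni$-part; concretely $F(p_n)=-y_nI_n+iy_n$, so $F'=-y_nI_n$, $F''=y_n$, and $f(x_n+y_nJ)=y_n(J-I_n)$. This vanishes if and only if $J=I_n$.

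The zeroes are isolated. On the sphere, $x_n+y_nI_n$ is the only zero. Off the sphere, every zero lies over the discrete set $\pi^{-1}$ of the $p_n$, so a neighbourhood of $x_n+y_nI_n$ contains no other zero. This proves (i) and (ii). I expect the main obstacle to be the existence of $F$ itself, in particular the infinite-product convergence and the invertibility of $F$ off $\Lambda$, but that work is already done in Proposition~\ref{nju}. What remains is to verify the algebra at the zero spheres carefully, and the direct representation-formula computation is the safest way to do this because it sidesteps non-associativity.
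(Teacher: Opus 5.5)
Your proposal is correct and follows the paper's proof. Proposition~\ref{nju} supplies the stem function $F$; Proposition~\ref{5.2} (or, more directly, the representation formula $f(x_n+y_nJ)=F'+JF''=y_n(J-I_n)$) pins down the zeroes on each sphere $x_n+y_n\S_A$; and Corollary~\ref{z-c1} rules out zeroes over $G\setminus\Lambda$. The only flaw is cosmetic: your displayed expansion should read $(1-iJ)(i-I_n)=(J-I_n)+i(1+JI_n)$, so the $iA$-component is $1+JI_n$ rather than $1-JI_n$, which matches the conclusion $J=I_n$, $JI_n=-1$ that you actually draw.
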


  \begin{proof}
    Due to Proposition~\ref{nju} we have a stem function $F:G\to\Ac$
    such that
        \begin{enumerate}
    \item
      $\forall n\in\N: F(x_n+y_ni)=y_n(i-I_n)$.
    \item
      $F(z)$ is an invertible element of the algebra $\Ac$ for all
      $z\in G\setminus \Lambda$
      with $\Lambda=\{x_n+y_ni:n\in\N\}$.
    \end{enumerate}
Let $f$ be the associated slice regular function.
        For a given $n\in\N$ and $J\in \S_A$, we have
        $f(x_n+y_nJ)=0$ if and only if
        $(1-iJ)F(x_n+y_ni)=0$
        (Proposition~\ref{5.2}).
        Now
        \[
        (1-iJ)F(x_n+y_ni)=(1-iJ)y_n(i-I_n)=y_n(i-I_n+J+iJI_n)
        \]

        For $J=I_n$, we have $i-I_n+J+iJI_n=0$ and therefore
        $f(x_n+y_nI_n)=0$.

        Next we show that there are no zeroes outside
        $\{x_n+y_nI_n:n\in\N\}$.
        
        Assume that $J\in\S_A$ with $f(x_n+y_nJ)=0$.
        Then
        \begin{align*}
          &(1-iJ)F(x_n+y_ni)=0\\
          \implies
        &(1-iJ)y_n(i-I_n)=0\\
        &(1-iJ)(i-I_n)=0\\
        \end{align*}
        (Note that by assumption $y_n>0$.)
        Since $i(i+I_n)(i-I_n)=0$, it follows that
        $(-1+iI_n)(i-I_n)=0$ and consequently
        \[
        \left(
        (1-iJ)+(-1+iI_n)
        \right)(i-I_n)=0
        \]
        Hence
        \[
        0=i(I_n-J)(i-I_n)=\underbrace{(J-I_n)}_{\in A}
        +\underbrace{i(1+JI_n)}_{\in iA}
        \]
        which implies
        \[
        J-I_n=0=1+JI_n.
        \]
        Thus we see: $f(x_n+y_nJ)=0$ holds
        only if $J=I_n$.

        Finally,  we show that $f(x+yJ)\ne 0$ for
        \[
        x+yJ\in \Omega_G
        \setminus\bigcup_{n\in\N}
        \left(x_n+y_n\S_A\right)
        \]
        Indeed, $F(x+yi)\in\Ac^ *$. 
        Hence $f(x+yJ)\ne 0$ due to Corollary~\ref{z-c1}.
  \end{proof}

  \section{Prescribing zeroes: mixed case}

  \begin{theorem}\label{thm-adapted}
   
    Let $D$ be an adapted subset of an axially symmetric subset $\Omega_G$
    in a finite-dimensional real alternative $*$-algebra $A$.

    Then there exists a slice regular function $f$
    on $\Omega_G$ and a point $p\in G$ such that
    \[
    D=\{q\in\Omega_G:f(q)=0\}
    \]
    and $F(p)\in\Ac^*$ for the associated stem function $F$.
  \end{theorem}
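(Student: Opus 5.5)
Split $D$ into three pieces and build $f$ as a star product of a slice preserving factor and a non-spherical factor, controlled by Proposition~\ref{star-zero}. Write $D=D_s\cup D_r\cup D_i$, where:
\begin{itemize}
\item $D_s$ is the union of the pseudospheres $x+y\S_A$ with $y\neq 0$ contained in $D$;
\item $D_r=D\cap\R$;
\item $D_i$ consists of the remaining points, each being the only point of $D$ on its pseudosphere $x_n+y_n\S_A$ with $y_n>0$.
\end{itemize}
A real point $x$ is the degenerate pseudosphere $x+0\cdot\S_A=\{x\}$. So $\Lambda_s=\pi(D_s)\cup D_r\subset G^+$ and $\{x_n+y_ni\}=\pi(D_i)$ are disjoint, and both are discrete, since $\pi(D)$ is discrete by Definition~\ref{def-adapted}.

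First apply Proposition~\ref{pr-spher} to $\Lambda_s$. This gives a slice preserving $g$ whose zero set is exactly $D_s\cup D_r$, and whose stem function $F_g=F_0(z)\overline{F_0(\bar z)}\in\C$ vanishes only on $\Lambda_s\cup\overline{\Lambda_s}$. Second, if $D_i\neq\{\}$, write $D_i=\{x_n+y_nI_n\}$ and apply Theorem~\ref{prescribe-is-zero}, through Proposition~\ref{nju}, to get a slice regular $h$ with zero set exactly $D_i$. Its stem function $H$ is invertible on $G\setminus(\pi(D_i)\cup\overline{\pi(D_i)})$. If $D_i$ is finite, Proposition~\ref{nju} is stated for infinite sequences. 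In that case run the same construction with a finite product of the functions from Proposition~\ref{p-2}, or simply append the factor-by-factor recursion with finitely many steps. If $D_i=\{\}$, take $h\equiv 1$. Then set $f=g*h$, with stem function $F=F_g\cdot H$. Since $g$ is slice preserving, Proposition~\ref{star-zero} gives
\[
\{f=0\}=\{g=0\}\cup\{h=0\}=D_s\cup D_r\cup D_i=D .
\]

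For the extra claim, pick $p\in G$ outside the closed discrete set $\Lambda_s\cup\overline{\Lambda_s}\cup\pi(D_i)\cup\overline{\pi(D_i)}$; such a $p$ exists because $G$ is open. There $F_g(p)\in\C^*$, which is central and invertible, and $H(p)\in\Ac^*$. Corollary~\ref{inv-prod} then gives $F(p)=F_g(p)H(p)\in\Ac^*$.

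I expect the main obstacle to be bookkeeping rather than analysis. The decomposition must be legitimate: the paper's pseudospheres for $y<0$ coincide with those for $|y|$, so $\pi$ is the right tool. Real points of $D$ must be handled by the spherical factor, because Proposition~\ref{nju} requires $y_n>0$. The finite or empty case of $D_i$ must also be covered. The non-discreteness of $D$ itself, which happens when $\S_A$ is non-compact, causes no trouble, since only the discreteness of $\pi(D)$ is used.
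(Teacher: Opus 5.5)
Your proposal is correct and follows essentially the same route as the paper. You use a slice preserving factor from Proposition~\ref{pr-spher} for the spherical and real part of $D$ (the paper also puts real points there, via $y=0$) and a factor from Theorem~\ref{prescribe-is-zero} for the isolated non-real zeroes, combine them as $f=g*h$ with Proposition~\ref{star-zero}, and get $F(p)\in\Ac^*$ from Corollary~\ref{inv-prod}. The differences are minor. You choose $p$ off the explicit discrete set $\Lambda_s\cup\overline{\Lambda_s}\cup\pi(D_i)\cup\overline{\pi(D_i)}$ using Proposition~\ref{nju}\,(iii), rather than intersecting an open set with a Zariski open set. You also treat the finite and empty cases of $D_i$ explicitly, which the paper passes over even though Proposition~\ref{nju} is stated only for infinite sequences.
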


  \begin{proof}
    Let $D_0$ be the union of all $x+y\S_A$ with $x,y\in\R$, $y\ge 0$ which are
    contained in $D$.
    Due to Proposition~\ref{pr-spher} there is a slice preserving
    function $f_0$ with
    \[
    D_0=\{q\in\Omega_G: f_0(q)=0\}.
    \]
    Let $D_1=D\setminus D_0$. From Definition~\ref{def-adapted}
    it follows that for every $x,y\in\R$ the intersection
    $(x+y\S_A)\cap D_1$ contains at most one element and that
    $\pi(D_1)$ is discrete in $G^+$.
    Theorem~\ref{prescribe-is-zero} implies that there
    exists a slice regular function $f_1$ with
    \[
    D_1=\{q\in\Omega_G: f_1(q)=0\}.
    \]
    Due to Proposition~\ref{star-zero} the function $f=f_0*f_1$
    satisfies
    \[
    \{q\in\Omega_G: f(q)=0\}=D_1\cup D_0=D.
    \]
    Finally, let $F_i$ be the stem function for $f_i$
    ($i\in\{0,1\}$.
    Due to Theorem~\ref{prescribe-is-zero}, $(iii)$, we have
    $F_1(q)\in\Ac^*$ for all $q$ in a non-empty open subset of $G$.
    On the other hand $F_0(q)\in\C^*\subset\Ac^*$ for
    all $q$ in a non-empty Zariski open subset of $G$.
    A non-empty Zariski open subset intersects every non-empty
    open subset. Hence we may choose $p\in G$ such that
    both $F_0(p)$ and $F_1(p)$ are contained in $\Ac^*$.
    Now the stem function $F$ of $f=f_0*f_1$ equals $F_0\cdot F_1$.
    Hence
    $F(p)\in\Ac^*$ due to Corollary~\ref{inv-prod}.
  \end{proof}

    \section{The structure of the zero-set}
  Since we have seen that every adapted set may be realized as the
  zero set of a slice regular function
  (Theorem~\ref{thm-adapted}),
  it is natural to ask about the
  converse:
  {\em Is the zero-set of a slice regular function
  which is not constantly zero necessarily an adapted
  set?}

  Without additional assumptions
  the answer is NO, as can be seen by an example of
  Ghiloni and Perotti (\cite{GP}, Remark 12,
  see also \cite{GPS-TAMS}, Example~4.13),
  which we now describe in our words:

  \begin{example-nr}[Ghiloni-Perotti]\label{not-adapted}
  Let $A=\H$, $\Omega_G=\H\setminus\R$, fix $J\in\S$ and define
  \[
  f(q)=\frac{Im(q)}{|Im(q)|}-J.
  \]
  In other words:
  \[
  f(x+yH)=H-J\quad \text{ for }x\in\R, y\in\R^+,H\in\S_\H,
  \]
  This is a slice regular function with stem function
  \[
  F(z)=\begin{cases}
  i-J & \text{ if $\Im(z)>0$}\\
  -i-J & \text{ if $\Im(z)<0$}\\
  \end{cases}
  \]
  The zero set of $f$ is
  \[
  \C_J^+=\{x+yJ:x,y\in\R, y>0\}.
  \]
  This is not an adapted set.
  \end{example-nr}

  Another example is provided by
  the Clifford algebra $\R_3$ (see e.g.~\cite{BDMW}
  for more on $\R_3$).
  \begin{example}
    On the quadratic cone of
    $\R_3\simeq\H\oplus\H$ a slice regular function $f$
    is given by $(q_1,q_2)\mapsto (0,q_2-J)$.

    Recall
    \[
    Q_{\R_3}=\{(q_1,q_2)\in\H: N(q_1)=N(q_2), Tr(q_1)=Tr(q_2)\}.
    \]

    Then
    \[
    \{(q_1,q_2)\in Q_{\R_3}:f(q_1,q_2)=0\}
    =\S_{\H}\times\{J\}
    \]
    which is not an adapted set.
  \end{example}

  On the other hand, for division algebras
  (i.e., for $\H$ and $\Oct$) there is a positive result
  under an additional assumption (namely $G\cap\R\ne\{\}$):
  
  \begin{theorem}\label{thm-mixed}
    Let $A$ be a division $*$-algebra (i.e., $A=\C$, $A=\H$ or $A=\Oct$).
    Let $G$ be a symmetric domain in $\C$
    with $G\cap\R\ne\{\}$
        and let $\Omega_G$ be the
    associated axially symmetric domain in $Q_A=A$.%
    \footnote{For $A$ being isomorphic to $\C$, $\H$ or $\Oct$
      we have $Q_A=A$.}

    Then for every slice regular function
    $f:\Omega_G\to\H$ 
    the zero set
    \[
    D=\{q\in\Omega_G: f(q)=0\}
    \]
    is an {\em adapted} subset (in the sense of
    Definition~\ref{def-adapted}) of $\Omega_G$.
  \end{theorem}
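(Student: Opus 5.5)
We work with the stem function $F=F'+iF''$ of $f$ (with $F',F'':G\to A$). To show that $D$ is adapted we check the two conditions of Definition~\ref{def-adapted}: (ii) is pure algebra on a single pseudosphere, and (i) follows from the zero set of an auxiliary $\C$-valued holomorphic function being discrete. We assume, as the statement tacitly must, that $f\not\equiv 0$; for $f\equiv 0$ we have $D=\Omega_G$ and $\pi(D)=G^+$ is not discrete.

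\emph{Condition (ii).} Fix $x,y\in\R$ and write $\alpha=F'(x+yi)$ and $\beta=F''(x+yi)$. Then
\[
f(x+yJ)=\alpha+J\beta \quad\text{for all } J\in\S_A .
\]
If $y=0$, the set $x+y\S_A$ is a single point and there is nothing to show. If $\beta=0$, then $f$ is constant on $x+y\S_A$, so either the whole pseudosphere lies in $D$ or $D$ misses it. If $\beta\neq 0$, then $\beta$ is invertible because $A$ is a division algebra, and $\alpha+J\beta=0$ forces $J\beta=-\alpha$. Multiplying on the right by $\beta^{-1}$ gives the unique solution $J=-\alpha\beta^{-1}$; for $A=\Oct$ this step uses Artin's theorem (Theorem~\ref{artin-thm}), applied in the subalgebra generated by $J$ and $\beta$, in the same way as in Corollary~\ref{inv-ass}. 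Hence $(x+y\S_A)\cap D$ has at most one element.

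\emph{Condition (i).} Put $F^c=F'^c+iF''^c$ and
\[
n=F\cdot F^c=\bigl(F'F'^c-F''F''^c\bigr)+i\bigl(F'F''^c+F''F'^c\bigr).
\]
Since $ww^c$ and $w+w^c$ are real for every $w$ in $\H$ or $\Oct$, the function $n:G\to\C$ is holomorphic. Wherever $n(z)\neq 0$, the element $F^c(z)n(z)^{-1}$ is a right inverse of $F(z)$, so $F(z)\in\Ac^*$ by Proposition~\ref{alt-eq}. Corollary~\ref{z-c1} then shows that $f$ has no zeroes on $x+y\S_A$ whenever $n(x+yi)\neq0$. Consequently $\pi(D)\subset\{z\in G^+:n(z)=0\}$, and it suffices to show that $n\not\equiv 0$ on $G$ and that $G$ is connected.

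\emph{Connectedness of $G$.} Let $C$ be a connected component of $G$. If $C$ meets $\R$, then $\bar C=C$. Because $G/\!\sim$ is connected and $G\cap\R\neq\{\}$, the component containing a real point must be all of $G$; otherwise the images of that component and of the remaining components would disconnect the quotient.

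\emph{$n\not\equiv 0$.} For real $t\in G$ we have $F''(t)=0$, so $n(t)=F'(t)F'(t)^c$, which is the squared norm of $F'(t)$. If $n\equiv0$, then $F$ vanishes on the nonempty open interval $G\cap\R$. By the identity principle on the connected set $G$ this gives $F\equiv 0$, hence $f\equiv 0$, a contradiction. Therefore the zero set of $n$ is discrete in $G$, and so $\pi(D)$ is discrete in $G^+$.

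\emph{Main obstacle.} The delicate point is the connectedness argument for $G$ together with the identity principle. This is exactly where the hypothesis $G\cap\R\neq\{\}$ enters: without it, $G$ may split into two components with $n\equiv0$ on them, as happens in Example~\ref{not-adapted}. The algebraic steps, namely that $n$ is $\C$-valued and the uniqueness of $J$ for octonions, reduce to Artin's theorem and the realness of $ww^c$ and $w+w^c$ in division algebras.
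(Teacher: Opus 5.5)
Your argument is correct. The paper does not prove this theorem itself; it only cites the literature. So your proposal is a self-contained alternative rather than a reconstruction of an argument given in the paper.

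What you wrote is essentially the standard argument from those references. Your function $n=FF^c$ is the stem function of the normal function $N(f)=f*f^c$, which is slice preserving. On spheres where $n\neq 0$, the value $F$ is invertible, so $\pi(D)$ is confined to the zero set of a single $\C$-valued holomorphic function. Condition (ii) comes from the pointwise computation $J=-\alpha\beta^{-1}$ in a division algebra.

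Compared with the bare citation, your version has three advantages:
\begin{itemize}
\item It uses only tools proved in the paper: Proposition~\ref{alt-eq}, Corollary~\ref{z-c1} and Artin's theorem.
\item It makes explicit the hypothesis $f\not\equiv 0$, which the statement omits and which is necessary, since $f\equiv 0$ gives $\pi(D)=G^+$.
\item It isolates exactly where $G\cap\R\ne\{\}$ enters: in the connectedness of $G$, and in $n(t)=F'(t)F'(t)^c$ being a squared norm at real points. This also explains why Example~\ref{not-adapted} fails, since there $n\equiv 0$ on both half-planes.
\end{itemize}

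You could shorten condition (i) with the paper's own Proposition~\ref{zero-sub-adapted}. Once you know that $G$ is connected and $F\not\equiv 0$, the identity principle gives a real $t\in G$ with $F(t)=F'(t)\in A\setminus\{0\}=A^*\subset\Ac^*$. That proposition then places $D$ inside an adapted set, with $\det L_F$ playing the role of your $n$.
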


  \begin{proof}
    See \cite{camshaft}, \cite{GP}, \cite{GSSMono}.
  \end{proof}

    \begin{remark}
   Ghiloni and Perotti \cite{GP}
  introduced the notion of ``admissible functions''.

  For such ``admissible functions'' they prove that the
  zero set of a slice regular function $f$ on an axially
  symmetric domain $\Omega\subset Q_A\subset A$ is what we
  call an ``adapted set'', even if $A$ is not a division algebra.
  \end{remark}
    
      In general, if we require that the stem function has
      an invertible function value somewhere, we can show that
      the zero set must be contained in an adapted set.
    
  \begin{proposition}\label{zero-sub-adapted}
    Let $A$ be an alternative finite-dimensional real $*$-algebra and $f$ a slice regular
    function on an axially symmetric  domain $\Omega_G$ such that
    there is a point $p\in G$ in which the stem function has
    an invertible value (i.e.~$F(p)\in\Ac^*$).

    Then the zero set $V(f)=\{w\in \Omega_G: f(w)=0\}$
    is contained in an adapted subset of $\Omega_G$.
  \end{proposition}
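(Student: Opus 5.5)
The plan is to use the holomorphic function $\lambda\circ F$ on $G$, where $\lambda(x)=\det L_x$ for $x\in\Ac$, and to take as the adapted set
\[
D=\bigcup_{x+yi\in\Lambda^+}\left(x+y\S_A\right),\qquad
\Lambda^+=\{z\in G^+:\ F(z)\notin\Ac^*\}.
\]
Zeroes can then only lie on spheres over points where the stem function fails to be invertible. In this construction I take whole spheres over every such point; then condition (ii) of Definition~\ref{def-adapted} holds trivially, since each $x+y\S_A$ is either contained in $D$ or disjoint from it. So the task reduces to showing $V(f)\subset D$ and $\pi(D)=\Lambda^+$ discrete in $G^+$.

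For the inclusion, let $f(x+yJ)=0$ with $x,y\in\R$, $J\in\S_A$. By Corollary~\ref{z-c1}, $F(x+yi)$ is a zero divisor, hence not invertible (Proposition~\ref{alt-eq}, Remark~\ref{hufeisen}). Replacing $(y,J)$ by $(-y,-J)$ if necessary, we may take $y\ge 0$. To see that this does not disturb non-invertibility, use the stem relation $F(\bar z)=\overline{F(z)}$. Complex conjugation on $\Ac=A\otimes\C$ is an $\R$-algebra automorphism, so $F(z)$ is invertible iff $F(\bar z)$ is. Thus the set $Z=\{z\in G:F(z)\notin\Ac^*\}$ is symmetric under conjugation, $\pi(x+yJ)=x+|y|i\in Z\cap G^+=\Lambda^+$, and therefore $x+yJ\in D$.

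The main point is discreteness of $\Lambda^+$. Since $Z=(\lambda\circ F)^{-1}(0)$ (Corollary~\ref{unit-open}, Proposition~\ref{alt-eq} (i)$\iff$(ix)) and $\lambda\circ F$ is holomorphic on $G$, $Z$ is an analytic subset of $G$. By hypothesis $\lambda(F(p))\neq 0$, so $\lambda\circ F\not\equiv 0$, but only on the connected component of $G$ containing $p$. This is the obstacle: $G$ itself may be disconnected (e.g.~$G=\C\setminus\R$), and only $G/\!\sim$ is connected. I will handle it using the conjugation symmetry. The set $U$ of points of $G$ having a neighbourhood on which $\lambda\circ F\equiv 0$ is open, and it is closed in $G$ by the identity theorem on each component. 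It is also conjugation-invariant, because $Z$ is, so its image in $G/\!\sim$ is open and closed. By connectedness of $G/\!\sim$ that image is empty or everything, and it misses the class of $p$. Hence $U=\emptyset$, so $\lambda\circ F$ vanishes identically on no component and $Z$ is discrete in $G$. Consequently $\Lambda^+=Z\cap G^+$ is discrete in $G^+$, so $D$ is adapted and contains $V(f)$.
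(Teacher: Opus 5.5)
Your proof is correct and follows the paper's argument. Both take the non-invertibility locus $\{z\in G: F(z)\notin\Ac^*\}$, show it is discrete as the zero set of $\det L_{F(z)}$, and then use Corollary~\ref{z-c1} to see that the union of the full pseudospheres over it is an adapted set containing $V(f)$. Your extra step fills in the case where $G$ is disconnected, using conjugation invariance of this locus and the connectedness of $G/\!\sim$; the paper passes over that case with the phrase ``either discrete or equal to $G$''.
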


  \begin{proof}
    The set of invertible elements in $\Ac$ is Zariski open
    (Corollary~\ref{unit-open}). Hence
    \[
    \Lambda=\{z\in G: F(z)\not\in\Ac^*\}
    \]
    is either discrete or equal to $G$. But we assumed that there
    is one point $p$ for which $F(p)\in\Ac^*$.
    Hence $\Lambda$ is discrete and
    \[
    D=\{x+yI:x,y\in\R,x+yi\in\Lambda, I\in\S_A\}
    \]
    is an adapted set.
    Corollary~\ref{z-c1} and
    the implication ``$(i)\implies(v)$'' of Proposition~\ref{alt-eq} 
    imply that $ \{w\in\Omega_G:f(w)=0\}\subset D$.
  \end{proof}
  \begin{remark}
    Recall: Given a slice regular function $f$ and a pseudosphere
    $x+y\S$, there is an affine-linear function $L$ such that
    $f(x+yI)=L(x+yI)\ \forall I\in\S$.

    Thus $\{f=0\}\cap (x+y\S)$ is not arbitrary, it necessarily concides
    with $\{x+yI: I\in\S, (x+yI)a+b=0\}$ for some $a,b\in A$.

    Therefore the above proposition provides a fairly complete
    picture of possible zero sets for slice regular functions.
  \end{remark}

  \chapter{Main Results: (Non-)Extension}

\section{Topological Preparations: Discrete sets}

\begin{lemma}\label{L16-1}
  Let $\pi:X\to Y$ be a continuous map between
  metric topological
  spaces which is nowhere locally constant.

  Then for every $\epsilon,\delta>0$, $p\in X$ there is an element
  $\tilde p\in X$ such that
  \[
  0<d_X(p,\tilde p)<\delta,\quad
  0<d_Y(\pi(p),\pi(\tilde p))<\epsilon
  \]
\end{lemma}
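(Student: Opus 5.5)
The plan is to use that $\pi$ is not constant on any neighborhood of $p$, together with continuity of $\pi$ at $p$.

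\textbf{Step 1 (continuity).} Fix $\epsilon,\delta>0$ and $p\in X$. Since $\pi$ is continuous at $p$, we choose $0<\delta'\le\delta$ such that
\[
d_X(p,q)<\delta'\implies d_Y(\pi(p),\pi(q))<\epsilon .
\]

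\textbf{Step 2 (non-constancy).} The ball $B=\{q\in X:d_X(p,q)<\delta'\}$ is an open neighborhood of $p$. By hypothesis $\pi$ is not constant on $B$, so there is $\tilde p\in B$ with $\pi(\tilde p)\ne\pi(p)$.

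\textbf{Step 3 (check the inequalities).} Because $\pi(\tilde p)\ne\pi(p)$, we have $\tilde p\ne p$, hence $0<d_X(p,\tilde p)<\delta'\le\delta$. Likewise $0<d_Y(\pi(p),\pi(\tilde p))<\epsilon$, using Step 1 for the upper bound and $\pi(\tilde p)\ne\pi(p)$ for strict positivity.

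The only point needing care is the meaning of ``nowhere locally constant.'' I read it as: $\pi$ is constant on no non-empty open set, in particular on no neighborhood of any point. Under the alternative reading ``no point has a neighborhood on which $\pi$ is constant,'' the argument of Step 2 applies verbatim. There is no real obstacle beyond fixing this interpretation.
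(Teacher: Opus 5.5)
Your proof is correct and is essentially the paper's argument. The paper argues by contradiction that $\pi$ would otherwise be constant on the open neighborhood $\{x\in X:d_X(x,p)<\delta\}\cap\pi^{-1}(\{y\in Y:d_Y(\pi(p),y)<\epsilon\})$ of $p$, while you argue directly on a smaller ball inside that same neighborhood, using the same two ingredients: continuity at $p$ and non-constancy near $p$.
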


\begin{proof}
  Assume the contrary.
  Then we have a triple $(\epsilon,\delta,p)$ such that $\pi$ is
  constant on the open neighborhood
  \[
  \{x\in X:d_X(x,p)<\delta\}\cap\pi^ {-1}\left(\{y\in Y:d_Y(\pi(p),y)<\epsilon\}
  \right)
  \]
  of $p$, contradicting the assumption that $\pi$ is nowhere locally
  constant.
\end{proof}

\begin{lemma}\label{make-inj}
  Let $\pi:X\to Y$ be a continuous map between metric topological spaces
  which is nowhere locally constant. Assume that
    $D=\{p_k:k\in\N\}$ is a countable subset of  $X$.

  Then there exists a sequence $q_k$ in $X$ with
  $d_X(p_k,q_k)<2^{-k}\ \forall k$ such that $\pi$ restricts
  to an {\em injective} map from $\tilde D=\{q_k:k\in\N\}$ to $Y$.
\end{lemma}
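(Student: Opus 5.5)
We construct the points $q_k$ recursively in $k$, using Lemma~\ref{L16-1} at each step to move $p_k$ slightly so that its image avoids the finitely many values already used. Suppose $q_1,\ldots,q_{k-1}$ have been chosen with $d_X(p_j,q_j)<2^{-j}$ and with $\pi(q_1),\ldots,\pi(q_{k-1})$ pairwise distinct. We must find $q_k$ with $d_X(p_k,q_k)<2^{-k}$ and $\pi(q_k)\notin\{\pi(q_1),\ldots,\pi(q_{k-1})\}$.

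If $\pi(p_k)$ is not among these values, we simply take $q_k=p_k$. Otherwise, let
\[
\epsilon=\min\{d_Y(\pi(p_k),\pi(q_j)):j<k,\ \pi(q_j)\ne\pi(p_k)\},
\]
with $\epsilon=1$ if this set is empty. This is a positive number, being a minimum over a finite set of positive distances. Apply Lemma~\ref{L16-1} with this $\epsilon$, with $\delta=2^{-k}$, and at the point $p_k$. It yields $\tilde p$ with $0<d_X(p_k,\tilde p)<2^{-k}$ and $0<d_Y(\pi(p_k),\pi(\tilde p))<\epsilon$.

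Set $q_k=\tilde p$. Its image differs from $\pi(p_k)$, which takes care of the (unique) $j<k$ with $\pi(q_j)=\pi(p_k)$. For every other $j<k$, the image $\pi(q_k)$ lies strictly within distance $\epsilon$ of $\pi(p_k)$, while $\pi(q_j)$ lies at distance at least $\epsilon$; hence $\pi(q_k)\ne\pi(q_j)$. By induction, $\pi$ is injective on the sequence $(q_k)$.

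One subtlety needs care: injectivity on the set $\tilde D=\{q_k\}$ is then automatic, but we should make sure that distinct indices do not produce coinciding points. They cannot, because $\pi(q_k)\ne\pi(q_j)$ forces $q_k\ne q_j$, so $\pi|_{\tilde D}$ is injective. The only real content is the choice of $\epsilon$ at each step, and the sole input is Lemma~\ref{L16-1}. I expect no serious obstacle beyond handling the case where the finite set of previously used values meets $\pi(p_k)$, which the argument above covers.
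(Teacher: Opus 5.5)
Your proof is correct and is essentially the paper's argument. Both build $q_k$ recursively, keep $q_k=p_k$ when $\pi(p_k)$ is a new value, and otherwise apply Lemma~\ref{L16-1} at $p_k$ with $\delta=2^{-k}$ and an $\epsilon$ below the distance from $\pi(p_k)$ to the other previously used values. Your explicit treatment of the empty-minimum case and of the distinctness of the $q_k$ tidies points the paper leaves implicit.
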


\begin{proof}
  We construct $\tilde D$ recursively, starting with $q_1=p_1$.

  Given $q_1,\ldots,q_n$ we choose $q_{n+1}$ as follows:

  If $\pi(p_{n+1})\not\in\pi(\{q_1,\ldots,q_n\})$, we set $q_{n+1}=p_{n+1}$.

  Now assume
  \[
  \exists m\in\{1,\ldots,n\}:
  \pi(p_{n+1})=\pi(q_m).
  \]

    Note: By construction, $\pi(q_1),\ldots,\pi(q_n)$ are pairwise
    distinct. Hence there can be only one such $m$ and
    $\pi(p_{n+1})\ne q_k$ for all
    $k\in\{1,\ldots,n\}\setminus\{m\}$
  
  Fix $\delta>0$ such that
  \[
  \forall k\in\{1,\ldots,n\}\setminus\{m\}:\ d_Y(\pi(p_{n+1}),\pi(q_k))>\delta.
  \]
  Because $\pi$ is nowhere locally constant,
  using Lemma~\ref{L16-1} we can find an element $q_{n+1}$
  such that
  \[
  0<d_Y(\pi(p_{n+1}),\pi(q_{n+1}))<\delta\quad
  \text{and }\quad
  d_X(p_{n+1},q_{n+1})<2^{-(n+1)}.
  \]
  By construction $q_{n+1}$ has the desired properties.
\end{proof}

\begin{lemma}\label{keep-acc}
  Let $X$ be a metric space with subsets $D=\{p_k:k\in\N\}$
  and $\tilde D=\{q_k:k\in\N\}$ such that
  $\lim_{k\to+\infty} d_X(p_k,q_k)=0$.

  Then every accumulation point of $D$ is also an accumulation
  point of $\tilde D$ and vice versa.
\end{lemma}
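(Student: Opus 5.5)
The plan is a direct $\varepsilon$-argument using the triangle inequality, applied symmetrically. Recall that $a\in X$ is an accumulation point of a set $S\subset X$ if every ball $B(a,r)=\{x\in X:d_X(a,x)<r\}$ contains infinitely many points of $S$. Equivalently, and this is the form I will use, there is a sequence of pairwise distinct elements of $S$ converging to $a$. Since the hypothesis $\lim_{k\to+\infty} d_X(p_k,q_k)=0$ is symmetric in the two sequences, it suffices to prove one direction: every accumulation point of $D$ is an accumulation point of $\tilde D$. The converse then follows by exchanging the roles of $(p_k)$ and $(q_k)$.

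Let $a$ be an accumulation point of $D$ and fix $r>0$. I first claim that the set of indices $k$ with $d_X(a,p_k)<r/2$ is infinite. Indeed, the ball $B(a,r/2)$ contains infinitely many distinct points of $D$, and each such point is $p_k$ for at least one index $k$. Distinct points require distinct indices, so there are infinitely many indices. Next, choose $k_0$ such that $d_X(p_k,q_k)<r/2$ for all $k\ge k_0$. For the infinitely many indices $k\ge k_0$ with $d_X(a,p_k)<r/2$, the triangle inequality gives $d_X(a,q_k)<r$.

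The remaining, and only delicate, step is to pass from infinitely many indices to infinitely many distinct points $q_k$, since the sequence $(q_k)$ might repeat. I will argue by contradiction. Suppose $B(a,r)\cap\tilde D$ were finite for some $r$. Replacing $r$ by a smaller radius, I may assume $B(a,r)\cap\tilde D\subset\{a\}$. Then the argument above, applied to this smaller $r$, yields infinitely many indices $k$ with $q_k=a$. Along those indices $d_X(p_k,a)=d_X(p_k,q_k)\to 0$. Because the $p_k$ in question take infinitely many distinct values (from the previous paragraph), we can further restrict to indices where the $p_k$ are pairwise distinct. This is consistent: it exhibits $a$ as a limit of points of $D$, but it produces no new points of $\tilde D$, so the contradiction does not come for free.

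Hence the statement needs the sets to be infinite in a way the repetition issue respects. I will handle this by reading ``accumulation point'' in the sense actually used later in the paper, namely as a limit of a subsequence $p_{k_j}$ with $k_j\to\infty$. In that sense the claim is immediate: if $p_{k_j}\to a$ with $k_j\to\infty$, then $d_X(a,q_{k_j})\le d_X(a,p_{k_j})+d_X(p_{k_j},q_{k_j})\to 0$, so $q_{k_j}\to a$. I will state this interpretation explicitly and note that it coincides with the set-theoretic notion when the $q_k$ are pairwise distinct, which is the case in the application via Lemma~\ref{make-inj}, where $\pi$ is injective on $\tilde D$. The main obstacle is exactly this distinctness issue, and I expect to resolve it through this choice of definition rather than through further estimates.
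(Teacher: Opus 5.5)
Your final argument is exactly the paper's proof. The paper likewise defines an accumulation point as a limit of $p_{n_k}$ (implicitly with $n_k\to+\infty$, which you make explicit) and concludes by the triangle inequality. Your doubt about the purely set-theoretic reading is well founded: $p_k=1/k$, $q_k=0$ in $\R$ is a counterexample to it. So the subsequence reading is genuinely needed, and it agrees with the set-theoretic one once both enumerations are injective, as in the application.
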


\begin{proof}
  A point $p\in X$ is an accumulation point of $D$ if there is a sequence
  $n_k$ in $\N$ such that $\lim_{k\to+\infty} d_X(p_{n_k},p)=0$.
  In combination with $\lim_{k\to+\infty} d_X(p_k,q_k)=0$ and the
  triangle inequality this yields the assertion.
\end{proof}

\begin{corollary}\label{L16.3}
  Let $\pi:X\to Y$ be a continuous map between metric topological spaces
  which is nowhere locally constant. Assume that
    $D$ is a countable subset of  $X$.

  Then there exists a countable subset $\tilde D$ in $X$ such that
  \begin{enumerate}
    \item $D$ and $\tilde D$ have the same accumulation points in $X$,
  \item
    $\pi$ restricts to an injective map $\pi|_{\tilde D}:\tilde D\to Y$
  \end{enumerate}
\end{corollary}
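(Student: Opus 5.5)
The corollary is a direct combination of Lemma~\ref{make-inj} and Lemma~\ref{keep-acc}. I will enumerate $D$ as $D=\{p_k:k\in\N\}$. If $D$ is finite, I repeat elements, or treat the finite case separately. In the finite case $D$ has no accumulation points, so it suffices to perturb finitely many points, one at a time, using Lemma~\ref{L16-1} exactly as in the recursive step of Lemma~\ref{make-inj}; any finite $\tilde D$ then has no accumulation points either. In the countably infinite case the enumeration is a bijection $\N\to D$.

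Next I apply Lemma~\ref{make-inj}, which yields a sequence $q_k$ with $d_X(p_k,q_k)<2^{-k}$ such that $\pi$ restricted to $\tilde D=\{q_k:k\in\N\}$ is injective. This gives assertion (ii) immediately. Since $2^{-k}\to 0$, the hypothesis $\lim_{k\to+\infty} d_X(p_k,q_k)=0$ of Lemma~\ref{keep-acc} holds, and that lemma then gives that $D$ and $\tilde D$ have the same accumulation points. This is assertion (i), and $\tilde D$ is countable by construction.

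The only point needing care is the meaning of ``accumulation point'' in Lemma~\ref{keep-acc}. That lemma phrases it through subsequences $p_{n_k}\to p$, and for this to coincide with the usual notion, the indices $n_k$ must tend to infinity. This is guaranteed when the enumeration is injective, since each point is then hit by only finitely many indices; the construction in Lemma~\ref{make-inj} also produces pairwise distinct $q_k$, because their $\pi$-images are distinct. With these sequences, ``limit of $p_{n_k}$ with $n_k\to\infty$'' is equivalent to ``accumulation point of the set'', and the triangle-inequality argument transfers between $D$ and $\tilde D$. I expect no real obstacle beyond this bookkeeping.
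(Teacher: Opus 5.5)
Your proposal is correct and is the paper's argument: the paper's proof just combines Lemma~\ref{make-inj} and Lemma~\ref{keep-acc}. Your extra bookkeeping (injective enumerations so that $n_k\to\infty$, and a separate treatment of finite $D$, where even $\tilde D=\emptyset$ would do) usefully tightens the loose definition of accumulation point in Lemma~\ref{keep-acc}. Drop the ``repeat elements'' option for finite $D$, however: a point enumerated infinitely often, say $p_k=a$ for all $k$, forces infinitely many distinct $q_k$ converging to $a$ and so creates a spurious accumulation point.
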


\begin{proof}
  Combine Lemma~\ref{make-inj} and Lemma~\ref{keep-acc}.
\end{proof}

The following topological result will be applied to the case where
$X$ is the quadratic cone $Q_A$ of an alternative finite-dimensional real $*$-algebra
and $Y=\{z\in\C:\Im ( z)\ge 0\}$ with $\pi(x+yI)=x+|y|i$ for
  $x,y\in\R, I\in\S_A$.
  
\begin{proposition}\label{P16-4}
  Let $X$ be a locally compact metric topological space,
  $Y$ a metric topological space,
  $\pi:X\to Y$ a 
  continuous map, $U\subset Y$ an open subset.
  
  Then $\tilde\Omega=\pi^{-1}(U)$ contains a discrete subset $D$
  such that
  \begin{enumerate}
  \item
    $\pi$ maps $D$ 
    onto a discrete subset of $U$.
  \item
    Every point in $\partial\tilde\Omega$ is an accumulation point of $D$.
  \end{enumerate}
\end{proposition}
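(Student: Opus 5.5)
We plan to build $D$ from an exhaustion of $\tilde\Omega$ together with a countable dense subset of the boundary. Here $\partial\tilde\Omega$ is the boundary of $\tilde\Omega$ in $X$.

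First we dispose of the trivial case $\partial\tilde\Omega=\{\}$, where $D=\{\}$ works. Otherwise, $X$ is a metric space and $\partial\tilde\Omega$ is a closed subspace. We want a countable set $\{b_m\}$ whose closure contains $\partial\tilde\Omega$. For this we would like separability, for instance from $\sigma$-compactness. If $X$ is only locally compact metric we would argue componentwise, or else replace ``countable dense'' by a locally finite family of $1/k$-nets chosen inside an exhaustion. Concretely we proceed as follows. For each $k\in\N$ we choose a maximal $1/k$-separated subset $N_k$ of $\partial\tilde\Omega$; every boundary point then lies within $1/k$ of $N_k$. For each $b\in N_k$ we pick a point $d_{b,k}\in\tilde\Omega$ with $d_X(b,d_{b,k})<1/k$. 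This is possible because $b$ is a boundary point of the open set $\tilde\Omega$. Since $\pi$ is continuous and $U$ is open, $\pi(d_{b,k})\in U$.

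The key step is to choose these points so that $\pi(D)$ is discrete in $U$ and $D$ is discrete in $\tilde\Omega$. We want $d_{b,k}$ to go ``to the boundary'' in both spaces. For this we also require $d_Y(\pi(d_{b,k}),Y\setminus U)<1/k$ whenever $\pi(b)\notin U$, which can be arranged by choosing $d_{b,k}$ close enough to $b$ and using continuity of $\pi$ at $b$. The problematic case is $b\in\partial\tilde\Omega$ with $\pi(b)\in U$. This cannot happen: $\pi^{-1}(U)$ is open and contains $b$, so $b$ would be an interior point of $\tilde\Omega$. Hence $\pi(b)\notin U$ for every boundary point $b$. Choosing $d_{b,k}$ with $d_Y(\pi(d_{b,k}),\pi(b))<1/k$, every accumulation point in $Y$ of the sequence of images (taken over $k\to\infty$) lies in $Y\setminus U$. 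This is what discreteness of $\pi(D)$ in $U$ requires, provided that for each fixed $k$ only finitely many images of level-$k$ points accumulate inside any given compact part of $U$.

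This local finiteness at a fixed level $k$ is the main obstacle. We handle it as follows. Let $\pi(d_{b,k})$ with $b\in N_k$ accumulate at some $u\in U$. Then $d_Y(u,Y\setminus U)\le 1/k$, so we get accumulation points only in the collar $\{u: d(u,Y\setminus U)\le1/k\}$. To kill these, we replace the bound $1/k$ for level $k$ by a bound tending to zero along an enumeration of all chosen points. We use local compactness of $X$ to make each $N_k$ locally finite in $X$, since separated sets are locally finite in compact sets. We then enumerate $D=\{d_j\}$ with $d_Y(\pi(d_j),Y\setminus U)\to0$ and $d_X(d_j,X\setminus\tilde\Omega)\to0$ along the enumeration, so that only finitely many $d_j$ lie in any set at positive distance from the complement. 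Such a set is then discrete in $\tilde\Omega$ and has discrete image in $U$, giving (i). For (ii), every $b\in\partial\tilde\Omega$ is within $1/k$ of some $b'\in N_k$, hence within $2/k$ of $d_{b',k}\in D$, for every $k$. So $b$ is an accumulation point of $D$. If a countable enumeration is needed, as in Corollary~\ref{L16.3}, we use that locally finite families in a $\sigma$-compact piece are countable.
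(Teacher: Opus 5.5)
Your argument is correct, given the separability discussed below, but it takes a different route from the paper.

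\textbf{The paper's route.} The paper fixes an exhaustion function $\rho$ on $X$. It cuts $X$ into the compact shells $C_n=\{\rho\le n,\ \frac1{n+1}\le d_Y(\pi(x),\partial U)\le\frac1n\}$ and takes a finite $\frac1n$-net of each. Discreteness of $\pi(D)$ holds because each $y\in U$ has a neighbourhood meeting only two shells. A boundary point $p$ is reached because an approximating sequence $p_k\to p$ eventually lies in $C_{n(k)}$ with $n(k)\to\infty$.

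\textbf{Your route.} You work on $\partial\tilde\Omega$ itself. You approximate the net points $b$ from inside $\tilde\Omega$. Using $\pi(b)\notin U$ and continuity of $\pi$ at $b$, you force $d_Y(\pi(d_j),\pi(b_j))\to0$, hence $d_Y(\pi(d_j),Y\setminus U)\to0$, along a single enumeration of all chosen points. Then only finitely many $\pi(d_j)$ lie at distance $\ge r$ from $Y\setminus U$, which gives (i); your $2/k$ estimate gives (ii).

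\textbf{What each buys.} Your route needs neither local compactness nor an exhaustion, only separability of $\partial\tilde\Omega$. In particular it applies directly to $X=Q_A$ in Corollary~\ref{exist-discrete}. That would make the passage to $\overline{Q_A}$, and the later pushing of points back into $\Omega_G$, unnecessary. In exchange, the paper's route gets countability and finiteness of the nets for free from compactness of the shells.

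\textbf{The separability caveat.} Countability of $\bigcup_k N_k$ is essential, because the enumeration trick is the entire proof of (i). So drop the hedges. Local finiteness of $N_k$ plays no role. The ``componentwise'' fallback for non-$\sigma$-compact $X$ cannot work, because there the proposition is false. Take $X=S\times[0,1]$ with $S=\R$ carrying the discrete topology, $Y=\R^2$, $\pi(s,t)=(s,t)$ and $U=\{(a,b)\in\R^2:b>0\}$. Every $(s,0)$ lies in $\partial\tilde\Omega$, so $D$ must meet each $\{s\}\times(0,1)$. Then $\pi(D)$ would be an uncountable discrete subset of $\R^2$, which is impossible. The paper's proof carries the same tacit hypothesis, since an exhaustion function exists only for $\sigma$-compact $X$. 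This is harmless for $X=\overline{Q_A}$, and under $\sigma$-compactness (hence separability) your argument is complete.
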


\begin{proof}
  If $U=\{\}$ or $U=Y$, the assertion is trivially
  satisfied with $D=\{\}$.
  Hence we assume $\{\}\ne U\ne Y$.
  
  Fix an exhaustion function $\rho:X\to\R^+$.%
  \footnote{The existence of an exhaustion function follows from
    $X$ being a locally compact metric space.}
  For $x\in X$ we define
  \[
  \delta(x)=
    \mathop{d}{}_Y(\pi(x),\partial U)=\inf_{z\in\partial U} d_Y(\pi(x),z).
  \]
  For every $n\in\N$ we consider the set
  \[
  C_n=\left\{x\in X:\rho(x)\le n, \frac 1n\ge \delta(x)\ge \frac1{n+1}
  \right\}
  \]
  Since $\rho$ is an exhaustion function, $C_n$ is a closed subset
  of the compact set $\{x:\rho(x)\le n\}$. Hence $C_n$ is compact
  and we may choose
  finitely many points $c_{n,1},\ldots,c_{n,r_n}$ such that
  
  \begin{equation}\label{cn-cover}
    C_n\subset\bigcup_{j\in\{1,\ldots,r_n\}}\left\{ x\in X:
    d_X\left(x,c_{n,j}\right)<\frac 1{n}
  \right\}
  \end{equation}
  
  We define $D$ as the set of all $ c_{n,j}$.

  By construction (observe that $C_k\cap C_m=\{\}$ unless $|k-m|\le 1$),
  \[
  \left\{x\in D: \frac 1n\ge \delta(x)\ge\frac 1{n+1}\right\}
  \]
  is finite for every $n\in\N$.

  Given a point $y\in U$,
  we have $d_Y(y,\partial U)>0$.
  Assume $d_Y(y,\partial U)\in
  ]\frac 1{n+1},\frac 1n]$.
  Then the set
  \[
  W=\left\{q\in Y:\frac 1{n-1}>d_Y(q,\partial U) >\frac 1{n+1}\right\}
  \]
  is an open neighborhood of $y$ such that
  \[
  \pi(D)\cap W\subset \pi(D\cap(C_{n-1}\cup C_n))
  \]
  is finite.
  Hence $\pi(D)$ is discrete in $U$ and $D$ is discrete
  in $X$.

  Finally, we have to show that $\partial \tilde\Omega$ is contained
  in the closure of $D$ in $X$.

  Let $p\in\partial\tilde\Omega$. Let $N>\rho(p)$.
  Choose a sequence $p_k\in\tilde\Omega$
  with $\lim_{k\to+\infty}p_k=p$.
  We require that $\rho(p_k)<N\ \forall k$.
  Since
  \[
  \lim_{k\to+\infty}\pi(p_k)=\pi(p)\in\partial U,
  \]
  we may furthermore
  require that $\delta(p_k)<1\ \forall k$.
  Then we define a map $k\mapsto n(k)$ by stipulating
  \[
  n(k)\in\N,\quad \frac1{n(k)}\ge \delta(p_k)>\frac{1}{n(k)+1}.
  \]
  In other words:
  \[
  n(k)=\left\lfloor \frac 1 {\delta(p_k)} \right\rfloor.
  \]
  Evidently
  $\lim_{k\to+\infty} \pi(p_k)=\pi(p)\in\partial U$ implies
  $\lim_{k\to+\infty} \delta(p_k)=0$ which in turn implies
  $\lim_{k\to+\infty} n(k)=+\infty$.
  By construction $p_k\in C_{n(k)}$ if $\rho(p_k)\le n(k)$.
  In particular, $p_k\in C_{n(k)}$ if $n(k)>N$.
  Fix $k_0\in\N$ such that
  \[
  n(k)> N\ \forall k\ge k_0.
  \]

  As a consequence, we have
  \[
  p_k\in C_{n(k)}\ \forall k\ge k_0.
  \]
  Due to \eqref{cn-cover} for every element
    $q\in C_{n(k)}$ there is an element $\lambda\in D$ with
    $d_X(q,\lambda)<\frac 1{n(k)}$.
  Thus   
  for each $k\ge k_0$ we can
  choose $\lambda_k\in D$ such that
  \[
  d_X(p_k,\lambda_k)<\frac{1}{n(k)}.
  \]
  Now
  \[
  d_X(p,\lambda_k)\le   d_X(p,p_k) +   d_X(p_k,\lambda_k)<
  d_X(p,p_k)+\frac 1{n(k)}
  \ \forall k\ge k_0
  \]
  and, using $\displaystyle\lim_{k\to+\infty} p_k=p$
  and $\displaystyle\lim_{k\to+\infty} n(k)=+\infty$, we obtain
  $\displaystyle\lim_{k\to+\infty} \lambda_k=p$.
\end{proof}

\begin{corollary}\label{exist-discrete}
  Let $A$ be an alternative finite-dimensional
  real $*$-algebra with quadratic cone $Q_A$
  and pseudosphere $\S_A$.
  
  Let $\Omega_G\subset Q_A$ be an axially symmetric domain in $A$
  associated to a symmetric domain $G$ in $\C$.
  
  Then there exists a discrete subset $D$ of $\Omega_G$ such that
  \begin{enumerate}
  \item
   For every $x,y\in\R$ the intersection
   $\left(x+y\S_A\right)\cap D$ contains at most one point.
   \item
   The closure of $D$ in $Q_A$ contains the boundary $\partial\Omega_G$
   of $\Omega_G$ in $Q_A$.
 \item
   \[
   \tilde D=\{x+yi: \exists J\in\S_A: x+yJ\in D\}
   \]
   is discrete in $G$.
  \end{enumerate}
\end{corollary}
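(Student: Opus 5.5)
We plan to apply Proposition~\ref{P16-4} with $X=Q_A$, $Y=\{z\in\C:\Im(z)\ge 0\}$, $\pi(x+yI)=x+|y|i$ and $U=G^+$. Here $U$ is relatively open in $Y$ because $G$ is open, and $\tilde\Omega=\pi^{-1}(G^+)=\Omega_G$ by the symmetry of $G$. Also $Q_A$ is a closed subset of the finite-dimensional real vector space $A$: it is $\R$ together with a set cut out by polynomial (in)equalities, and its closure adds only real points. So $Q_A$ is a locally compact metric space with the induced norm metric. The map $\pi$ is continuous, since on $Q_A$ the quantities $x=\frac12\Tr(w)$ and $y^2=\Nm(w)-\frac14\Tr(w)^2$ depend continuously on $w$. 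Proposition~\ref{P16-4} then gives a discrete $D_0\subset\Omega_G$ such that $\pi(D_0)$ is discrete in $G^+$ and every point of $\partial\Omega_G$ is an accumulation point of $D_0$.

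To obtain property (i) we make $\pi$ injective on the set, using Corollary~\ref{L16.3} (via Lemma~\ref{make-inj}). For this we must check that $\pi:\Omega_G\to Y$ is nowhere locally constant. Near $w=x+yI$, the points $x+t+yI$ for small real $t$ stay in the open set $\Omega_G$ and map to $x+t+|y|i$, which are distinct points. We apply Lemma~\ref{make-inj} with $X=\Omega_G$, so that the perturbed points stay inside $\Omega_G$. Moreover, since $D_0$ is discrete we may choose the perturbations $q_k$ with $d(p_k,q_k)<\min(2^{-k},r_k)$, where the $r_k$ are chosen so small that discreteness is preserved. The resulting $D$ is countable, has the same accumulation points in $Q_A$ as $D_0$ by Lemma~\ref{keep-acc}, and $\pi|_D$ is injective. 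Injectivity gives (i), since two points of $x+y\S_A$ have the same image under $\pi$.

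The main obstacle is keeping discreteness of $D$ in $\Omega_G$ and of $\tilde D=\pi(D)\cup\overline{\pi(D)}$ in $G$ (property (iii)) after the perturbation. Lemma~\ref{keep-acc} only controls accumulation points in $Q_A$, and $D_0$ accumulates only on $\partial\Omega_G$. Hence the perturbed set has no accumulation points inside $\Omega_G$, so $D$ is discrete there.

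For $\pi(D)$ we argue in the same way. If $\pi(q_{k_j})\to u\in G^+$ along a subsequence of distinct indices, then $\pi(p_{k_j})\to u$ as well, because $\pi$ is uniformly continuous on a neighborhood. More simply, we can choose $q_k$ so that $|\pi(q_k)-\pi(p_k)|<2^{-k}$, which Lemma~\ref{L16-1} allows since it controls both distances. This contradicts the discreteness of $\pi(D_0)$ in $G^+$, provided infinitely many indices are involved. If only finitely many indices are involved, the sequence is eventually constant, which is harmless.

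Finally, a discrete subset $\Lambda$ of $G^+$ yields $\Lambda\cup\bar\Lambda$ discrete in $G$, because $G^+$ is closed in $G$ and conjugation is a homeomorphism. This gives (iii).
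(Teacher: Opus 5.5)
\textbf{There is a genuine gap in your first step.} The rest of your outline matches the paper's: Proposition~\ref{P16-4}, then Corollary~\ref{L16.3} to make $\pi$ injective, then symmetrization. Your treatment of nowhere local constancy, of discreteness under perturbation, and of $\tilde D=\pi(D)\cup\overline{\pi(D)}$ is fine, and in places more careful than the paper's. The problem is your claim that $Q_A$ is closed in $A$ (``its closure adds only real points''). This is false in general, so $X=Q_A$ need not be locally compact, and Proposition~\ref{P16-4}, which needs an exhaustion function with compact sublevel sets, cannot be applied to it.

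\emph{Why the claim fails.} A point of the closure with $4\Nm(w)=\Tr(w)^2$ has the form $x+v$ with $x\in\R$, $v^c=-v$ and $\Nm(v)=0$, hence $v^2=0$, and such nonzero $v$ do occur. Take $A=\mathrm{Mat}(2\times 2,\R)\simeq Cl_{1,1}$ with $w^c=\mathrm{adj}(w)$. Then $\Tr(w)=\mathrm{tr}(w)$, $\Nm(w)=\det(w)$, and $\S_A$ is the noncompact set of traceless matrices of determinant $1$. With $I_t=\begin{pmatrix}0&t\\-t^{-1}&0\end{pmatrix}\in\S_A$, the points $t^{-1}I_t\in Q_A$ converge, as $t\to+\infty$, to the nilpotent $\begin{pmatrix}0&1\\0&0\end{pmatrix}\notin Q_A$.

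\emph{Consequence.} Scaling this by a small $\eta>0$ and translating by a real $x$ shows that no neighborhood of $x$ in $Q_A$ has compact closure in $Q_A$. So there is no exhaustion function on $Q_A$, and the compactness of the sets $C_n$ (hence the finite covers) in the proof of Proposition~\ref{P16-4} breaks down. The paper points out exactly this in the remark following the corollary, and it is the reason its proof is longer than yours.

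\textbf{The repair is the paper's route.}
\begin{enumerate}
\item Work in $X=\overline{Q_A}$, which is closed in a finite-dimensional space and hence locally compact.
\item Use $\bar\pi(w)=\frac12\Tr(w)+i\sqrt{\Nm(w)-\frac14\Tr(w)^2}$. It is well defined and continuous on $\overline{Q_A}$, because $\Tr(w),\Nm(w)\in\R$ and $4\Nm(w)\ge\Tr(w)^2$ are closed conditions.
\item Apply Proposition~\ref{P16-4} to $\Omega'=\bar\pi^{-1}(G^+)$. Note that $\Omega'\cap Q_A=\Omega_G$, and that a boundary point of $\Omega_G$ in $Q_A$ lies outside $\Omega'$ but is a limit of points of $\Omega_G$, so it is a boundary point of $\Omega'$ in $\overline{Q_A}$.
\item The resulting set may contain infinitely many points $p_k\in\overline{Q_A}\setminus Q_A$. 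Replace each by some $q_k\in\Omega_G$ with $d(p_k,q_k)<2^{-k}$ and $|\bar\pi(p_k)-\bar\pi(q_k)|<2^{-k}$. Such $q_k$ exist because $p_k$ is a limit of points of $Q_A$ and $\Omega'$ is relatively open in $\overline{Q_A}$.
\item Lemma~\ref{keep-acc}, together with your own perturbation argument, preserves property (ii), discreteness in $\Omega_G$, and discreteness of the image in $G^+$.
\end{enumerate}
After this, your injectivity step via Lemma~\ref{make-inj} and your symmetrization argument for (iii) go through unchanged.
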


\begin{proof}
  As a first step, we will show that there exists a discrete subset $D_0$
  satisfying $(ii)$ and $(iii)$.
  Later we will demonstrate that we may modify the discrete set $D_0$
  in such
  a way that the modified discrete set $D$ satisfy in addition condition $(i)$.

  We regard the map $\pi:Q_A\to H=\{z\in\C:\Im(z)\ge 0\}$ given as
  \[
  \pi:x+yI\mapsto x+|y|i\quad (x,y\in\R,I\in \S_A).
  \]
  We observe that $I\in\S_A\implies I=-I^c$ and $II^c=1$.
  Hence
  \[
  \forall x,y\in\R, I\in\S_A:\ \Tr(x+yI)=2x,\quad N(x+yI)=x^2+y^2.
  \]
  Therefore
  \[
  \forall w=x+yI\in Q_A: x+|y|i=\frac 12\Tr(w)+i\sqrt{N(w)-\frac 14\Tr(w)^2}.
  \]
  It follows that $\pi:Q_A\to H$ extends to a continuous map
  $\bar\pi$ defined on all of $\overline{Q_A}$, namely
  \[
  \bar\pi:w\mapsto\frac 12\Tr(w)+i\sqrt{N(w)-\frac 14\Tr(w)^2}
  \]
  (Note that $N(w)-\frac 14\Tr(w)^2\ge 0$ for $w\in Q_A$, implying
  $N(w)-\frac 14\Tr(w)^2\ge 0$ for $w\in\overline{Q_A}$.
  Hence $\sqrt{N(w)-\frac 14\Tr(w)^2}$ is a well defined element in
  $\R^+_0$.)

  Let $\Omega'_G=\{w\in\overline{Q_A}:\bar\pi(w)\in G\}$.
  As a next step, we apply Proposition~\ref{P16-4} with $X=\overline{Q_A}$,
  $\tilde\Omega=\Omega'_G$, $Y=H$ and $U=G$, obtaining a countable
  subset $D'$ in $\Omega'_G$
  such that
  \begin{itemize}
  \item
    $D'$ is discrete in $\Omega'_G$.
  \item
    $\bar\pi$ maps $D'$ onto a discrete
    subset of $G$.
  \item
    The closure of $D'$ in $\overline{Q_A}$ contains the boundary
    $\partial\Omega'_G$ of $\Omega'_G$ in $\overline{Q_A}$.
  \end{itemize}

  Let $v$ be a point in the boundary of $\Omega_G$ in $Q_A$,
  with $\lim v_k=v$ for a sequence $v_k$ in $\Omega_G$.
  By construction we have $\Omega_G'\cap Q_A=\Omega_G$.
  Hence $v_k\in\Omega_G'$, but $v\not\in\Omega_G'$.
  It follows that $v$ is contained in the
   boundary
   $\partial\Omega'_G$ of $\Omega'_G$ in $\overline{Q_A}$.
   Thus $D_0$ satisfies condition $(ii)$.

  If $D'\setminus(\Omega_G\cap D')$ is finite, we set $D_0=D'\cap\Omega_G$
    and we are done with the construction of
    $D_0$.

  This leaves the case where $D'\setminus(\Omega_G\cap D')$ is infinite.
  We enumerate this set
  \[
  D'\setminus(\Omega_G\cap D')
  = \{ p_k:k\in\N\}.
  \]
  Since every $p_k$ is contained in
  $\overline{Q_A}\setminus Q_A$, we may choose points
  $q_k\in\Omega_G=Q_A\cap\Omega'_G$
  such that
  \[
  d_X(p_k,q_k)<2^{-k}.
  \]
  Define
  \[
  D_0=(D'\cap\Omega_G)\cup\{q_k:k\in\N\}.
  \]
  
  Then $D_0$ satisfies properties $(ii)$ and $(iii)$, as desired.

  Finally, using Corollary~\ref{L16.3} we may
  modify $D_0$ in order to obtain a discrete set $D$ for which
  $\pi|_D$ is injective,
  without losing the other properties, i.e., $D$ (like $D_0$) satisfies
  conditions $(ii)$ and $(iii)$.

    Due to the definition of $\pi$, the fibers $\pi^{-1}(x+|y|i)$
    of the map $\pi|_D:D\to H$ are precisely the sets
    $\left(x+y\S_A\right)\cap D$. Hence
    injectivity of $\pi|_D:D\to H$ is equivalent to condition $(i)$.
  
\end{proof}

\begin{remark}
  If the quadratic cone $Q_A$ is not closed in $A$, then $Q_A$ is not necessarily
  locally compact. Therefore we can not apply Proposition~\ref{P16-4}
  directly to the quadratic cone $Q_A$
  and instead have to deal with the closure $\overline{Q_A}$.
  As a closed subset in a finite-dimensional real vector space,
  $\overline{Q_A}$ is locally compact.
\end{remark}

\section{Constructing Non-extendible Functions}

\begin{theorem}\label{no-extend}
  Let $A$ be a finite-dimensional real alternative
  $*$-algebra with quadratic cone $Q_A$.
  
  Let $\Omega_G$ be an axially symmetric domain in $Q_A$
  associated to a symmetric domain $G$ in $\C$.
  
  Then there exists a slice regular function $f$ on $\Omega_G$
  such that
  \begin{enumerate}
  \item
    $D=\{q\in\Omega_G:f(q)=0\}$ is discrete in $\Omega_G$.
  \item
    $D\cup\partial\Omega_G$ is the closure of $D$ in $Q_A$, i.e.,
    {\em every} boundary point of $\Omega_G$ is an accumulation point
    of $D$.
  \item
    There is a point $z\in G$ such that $F(z)\in\Ac^*$ for
    the stem function $F$ of $f$.
  \end{enumerate}
\end{theorem}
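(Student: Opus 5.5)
The plan is to combine Corollary~\ref{exist-discrete} with Theorem~\ref{thm-adapted}. First, apply Corollary~\ref{exist-discrete} to $\Omega_G$. This yields a discrete subset $D\subset\Omega_G$ with three properties: each pseudosphere $x+y\S_A$ meets $D$ in at most one point, the closure of $D$ in $Q_A$ contains $\partial\Omega_G$, and $\pi(D)$ is discrete in $G^+$. (Its image $\tilde D=\{x+yi:\exists J: x+yJ\in D\}$ is discrete in $G$, hence $\pi(D)$ is discrete in $G^+$.)

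By Definition~\ref{def-adapted}, $D$ is then an adapted set. Alternative (b) of condition (ii) holds for every $x,y$, and condition (i) holds since $\pi(D)$ is discrete. Theorem~\ref{thm-adapted} now provides a slice regular function $f$ on $\Omega_G$ and a point $p\in G$ such that $\{q\in\Omega_G:f(q)=0\}=D$ and $F(p)\in\Ac^*$. This gives assertions (i) and (iii) directly.

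For (ii), two inclusions are needed. The closure of $D$ in $Q_A$ contains $D\cup\partial\Omega_G$ by the choice of $D$. Conversely, any accumulation point in $Q_A$ of $D$ lies in $\overline{\Omega_G}=\Omega_G\cup\partial\Omega_G$. If such a point lies in $\Omega_G$, it belongs to $D$, because $D$ is discrete and closed in $\Omega_G$: it is the zero set of the continuous function $f$.

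I expect the only subtle point to be the hypotheses of Theorem~\ref{thm-adapted} in a degenerate case. If $\partial\Omega_G=\emptyset$, the set $D$ may be empty or finite. This is harmless, since both results still apply with $D$ empty, and Theorem~\ref{thm-adapted} then gives a zero-free $f$, which satisfies the claim vacuously. In particular, Theorem~\ref{prescribe-is-zero} needs a sequence of points with $y_n>0$, whereas $D$ might contain real points. These are handled by noting that for $y=0$ the pseudosphere $x+0\S_A=\{x\}$ lies entirely in $D$. Such points therefore fall under the spherical part $D_0$ of the proof of Theorem~\ref{thm-adapted}, which is treated by Proposition~\ref{pr-spher}, so no further work is needed.
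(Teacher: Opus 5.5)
Your proposal is correct and follows the paper's proof: take $D$ from Corollary~\ref{exist-discrete}, note that it is adapted, and apply Theorem~\ref{thm-adapted}. The extra checks you add are all sound but are left implicit in the paper: the reverse inclusion for the closure in (ii), real points of $D$ landing in the spherical part $D_0$, and the empty case.
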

  
\begin{proof}
  Due to Corollary~\ref{exist-discrete} we can find a discrete subset $D$
  of $\Omega_G$ such that
  \begin{enumerate}
  \item
   For every $x,y\in\R$ the intersection
   $\left(x+y\S_A\right)\cap D$ contains at most one point.
 \item
   The closure of $D$ in $Q_A$ contains the boundary $\partial\Omega_G$
   of $\Omega_G$ in $Q_A$.
  \end{enumerate}

  Thanks to Theorem~\ref{thm-adapted}
  there exists a slice regular function
  on $\Omega_G$ which has isolated zeroes at every point of $D$
  and no other zeroes and satisfies $(iii)$.
\end{proof}

\begin{corollary}\label{c-no-extend}
  Let $A$ be a finite-dimensional real alternative
  $*$-algebra with quadratic cone $Q_A$.
  
  Let $\Omega_G$ be an axially symmetric domain in $Q_A$.
  
  Then there exists a slice regular function $f$ on $\Omega_G$
  with discrete set of zeroes $D=\{q\in\Omega_G:f(q)=0\}$
  which can not be extended
  (as a slice regular function)
  to any larger axially symmetric domain $\Omega_H$
  ($\Omega_G\subsetneq \Omega_H\subset Q_A$).
\end{corollary}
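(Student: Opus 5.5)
We take $f$ to be the function provided by Theorem~\ref{no-extend}, and then argue by contradiction. Its zero set $D$ is discrete in $\Omega_G$, every point of $\partial\Omega_G$ (boundary in $Q_A$) is an accumulation point of $D$, and its stem function $F$ satisfies $F(z_0)\in\Ac^*$ for some $z_0\in G$.

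Suppose $\tilde f$ is a slice regular function on an axially symmetric domain $\Omega_H$ with $\Omega_G\subsetneq\Omega_H\subset Q_A$ and $\tilde f|_{\Omega_G}=f$. The first step is to produce a boundary point of $\Omega_G$ inside $\Omega_H$. Since $\Omega_H$ is connected and $\Omega_G$ is a nonempty open proper subset of it, $\Omega_G$ cannot also be closed in $\Omega_H$. Hence there is a point $w\in\Omega_H\cap\partial\Omega_G$. By Theorem~\ref{no-extend}(ii), $w$ is an accumulation point of $D$, so there is a sequence $d_k\in D$, pairwise distinct, with $d_k\to w$. Continuity of $\tilde f$ gives $\tilde f(w)=0$. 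Moreover, $D\cap K$ is infinite for every compact neighbourhood $K$ of $w$ in $\Omega_H$.

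The second step, which I expect to be the main obstacle, is to show that the zero set of $\tilde f$ cannot accumulate at an interior point of $\Omega_H$. This has to work in general alternative algebras, where zero sets need not be discrete (see Example~\ref{not-adapted}). Let $\tilde F:H\to\Ac$ be the stem function of $\tilde f$. By the identity principle on the connected quotient of the symmetric domain $H$ (taking care that $G$ and $H$ may be disconnected, but $H/\!\sim$ is connected), $\tilde F$ restricted to $G$ coincides with $F$. In particular $\tilde F(z_0)=F(z_0)\in\Ac^*$. The argument of Proposition~\ref{zero-sub-adapted} then applies to $\tilde f$. The set $\Lambda=\{z\in H:\tilde F(z)\notin\Ac^*\}$ is the zero set of the holomorphic function $\det L_{\tilde F(z)}$, which does not vanish identically on the connected symmetric-quotient of $H$. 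Therefore $\Lambda$ is discrete in $H$, and the zero set of $\tilde f$ lies in $\pi^{-1}(\Lambda)$ by Corollary~\ref{z-c1} and Proposition~\ref{alt-eq}.

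The third step derives the contradiction. We have $\pi(d_k)\in\Lambda\cap H^+$ and $\pi(d_k)\to\pi(w)\in H^+$. Since $\Lambda$ is discrete in $H$, the values $\pi(d_k)$ are eventually equal to a single value $x+yi$, which must be $\pi(w)$. However, by construction (Theorem~\ref{no-extend}, via Corollary~\ref{exist-discrete}(i)), each pseudosphere $x+y\S_A$ contains at most one point of $D$. This contradicts the $d_k$ being pairwise distinct. So no extension exists.

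The only delicate point is the identity-principle claim $\tilde F|_G=F$ when components of $G$ do not meet $\R$. I would handle it by working in each connected component of $H$ together with its conjugate, using the stem-function symmetry $\tilde F(\bar z)=\overline{\tilde F(z)}$. On any component of $H$ meeting $G$, the two functions agree on an open set, and therefore on the whole component. Since $G\subset H$, this gives $\tilde F=F$ on $G$.
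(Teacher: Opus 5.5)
Your proof is correct and is essentially the paper's argument: you take $f$ from Theorem~\ref{no-extend}, find a point of $\partial\Omega_G$ inside $\Omega_H$, use $\tilde F(z_0)=F(z_0)\in\Ac^*$ to make the non-invertibility locus of $\tilde F$ discrete in $H$, and contradict the accumulation of zeroes there, while supplying details the paper leaves implicit (the connectedness argument for the boundary point and the handling of a disconnected $H$). Two of your steps can be shortened: the identity-principle detour is unnecessary, since $\tilde F|_G=F$ holds pointwise by the inversion formula of Section~\ref{corr}, and the one-point-per-pseudosphere property is not needed, because $\pi(d_k)\in G$ while $\pi(w)\notin G$ already contradicts $\pi(d_k)$ being eventually equal to $\pi(w)$.
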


  \begin{proof}
  Let $\Omega_H$ be an axially symmetric domain
   with
   $\Omega_G\subsetneq \Omega_H$.
   Let $G$ and $H$ be the symmetric domains in $\C$
   to which $\Omega_G$
   and $\Omega_H$ are associated.
   
  Let $f$ be a slice regular function on $\Omega_G$ as
  provided by theorem~\ref{no-extend},
  with stem function $F:G\to\Ac$.
  Assume that $f$ extends to a slice regular function $\tilde f$
  on $\Omega_H$ with stem function $\tilde F:H\to\Ac$.
  Choose a point $p\in \Omega_H\cap\partial \Omega_G$.
  As usual, let $\pi$ denote the map from $Q_A$ to $\C$
  which maps each $x+yI$ to $x+|y|i$.
  By the choice of $f$
    ($(ii)$ of Theorem~\ref{no-extend}),
    $p$ is an accumulation point of $D$.
  Therefore $\pi(p)$ is an accumulation point of $\pi(D)$.
  It follows that $\pi(D)$ is not discrete in $H$.
  Thus $\pi\left(\{q\in\Omega_H:\tilde f(q)=0\}\right)$
  is not discrete in $H$.

  On the other hand, consider
  \[
  H^*=\{w\in H: \tilde F(w)\in\Ac^*\}
  \]
  By construction of the function $f$
  (see condition $(iii)$ in Theorem~\ref{no-extend}),
  there is a point $z\in G$ with $F(z)=\tilde F(z)\in\Ac^*$.
  Hence the set $H^*$ is not empty. Corollary~\ref{unit-open} implies that
  $H\setminus H^*$ is a discrete subset of $H$ and
  Corollary~\ref{z-c1} implies that $\pi(D)\cap H^*$
  is empty. It follows that $\pi(D)$ must be discrete in $H$.
  Contradiction!
\end{proof} 

\begin{remark}
  If we drop the demand that the zero set of the slice regular funtion
  should be discrete, there is an easy way to obtain a non-extendible
  function: Let $F_0:G\to\C$ be a holomorphic function whose zeroes
  accumulate everywhere
  at the boundary of $G$ in $\C$
  and define $F(z)=F_0(z)\overline{F_0(\bar z)}$.
  Then $F$ is a stem function of a {\em slice preserving}
  regular function $f$ which evidently does not extend to any
  larger axially symmetric domain.
\end{remark}
\nocite{BW1}

  \bibliography{auto}
\bibliographystyle{alpha}

\end{document}